\definecolor{myorange}{rgb}{.8,.0,.0}
\newcommand{\myhl}[1]{{{{#1}}}}
\newcommand{\myhll}[1]{{{#1}}}
\newcommand{\mb}[1]{\mathbf{#1}}
\newcommand{\mbb}[1]{\mathbb{#1}}
\newcommand{\mcl}[1]{\mathcal{#1}}
\newcommand{\EE}{\mathbb{E\:}}
\newcommand{\VV}{\mathbf{Var}}
\newcommand{\reals}{\mathbb{R}}
\newcommand{\integers}{\mathbb{N}}
\newcommand{\dd}{\text{d}}
 \newtheorem{remark}{Remark}
\newtheorem{example}{Example}
 \newtheorem{assumption}{Assumption}
\title{Well-posed Bayesian Inverse Problems: Priors with Exponential Tails
  \thanks{This work was supported in part by the Natural Sciences and
    Engineering Council of Canada.}}
 \author{Bamdad Hosseini and Nilima Nigam
 \footnotemark[2]}
\begin{document}


\maketitle

 \newcommand{\slugmaster}{%
 \slugger{}{xxxx}{xx}{x}{x--x}}
\renewcommand{\thefootnote}{\fnsymbol{footnote}}
\footnotetext[2]{Department of mathematics, Simon Fraser University,
  8888 University Drive, Burnaby, BC, V5A 1S6, Canada
  (\email{bhossein@sfu.ca},  \email{nigam@math.sfu.ca}).}
\renewcommand{\thefootnote}{\arabic{footnote}}

\begin{abstract}
We consider the well-posedness of Bayesian inverse
problems when the prior measure has exponential tails. 
In particular, we consider the class of \myhl{\it convex (log-concave) probability measures} which
include the Gaussian and Besov measures as well as certain classes of
hierarchical priors.  
We identify appropriate conditions on the likelihood distribution and
the prior measure which guarantee
existence,
uniqueness and stability of the posterior measure with respect to
perturbations of the data. \myhl{We also consider consistent
approximations of the posterior such as discretization by projection.}
Finally, we present a general recipe for 
construction of convex priors on Banach spaces which will be of
interest in practical applications where one often works with spaces 
such as $L^2$ or the continuous functions.
\end{abstract}

\begin{keywords}
Inverse problems, Bayesian, 
non-Gaussian, \myhl{Convex measure, Log-concave distribution}
\end{keywords}

\begin{AMS}
35R30, 
 62F99, 
	60B11. 
\end{AMS}

\pagestyle{myheadings}
\thispagestyle{plain}

%
%
%

\section{Introduction}\label{sec:introduction}

Readers are likely familiar with the generic inverse problem: locate a
$u\in X$ from some data $y\in Y$ given \myhl{the model
 \begin{equation}
 \label{generci-inverse-problem}
   y = \tilde{\mcl{G}}(u),
 \end{equation}
where $\tilde{\mcl{G}}$ is a generic stochastic mapping referred to as the {\it parameter to observation map} that models the
relationship between the parameter and the observed data by taking the
measurement noise into account (be it additive, multiplicative
etc). Here $X$ and $Y$ are Banach spaces with norms $\|\cdot \|_X,\|\cdot \|_Y$ respectively.
 As an
example, if the measurement noise is additive then we can write
\begin{equation}
 \label{generci-inverse-problem-additive}
\tilde{\mcl{G}}(u) = \mcl{G}(u) + \eta,
\end{equation}}
\myhl{where $\mcl{G}$ is referred to as the {\it forward model} which is a deterministic mapping that associates $u$ to $y$.} Stated in this generality, of course, it is not at all obvious that we can find a solution to \eqref{generci-inverse-problem}, nor how we should locate it.

The Bayesian approach to solution of inverse problems of the form \eqref{generci-inverse-problem}
has attracted
much attention in the past decade \cite{somersalo,
  stuart-acta-numerica}. These  methods are well established
in the statistics literature \cite{calvetti, bernardo} where they are
often applied to problems where $u$ belongs to a finite-dimensional space.
 However, the Bayesian approach in the setting of infinite-dimensional inverse problems, where the unknown
 $u$ belongs to an infinite dimensional function space, is
 less developed.
The ultimate goal of the Bayesian
 approach is to identify a (posterior) probability measure on the
 unknown parameter $u$ using noisy measurements and our prior 
knowledge about $u$. 

One of the first questions that one
might ask is 
whether or not this posterior probability measure is well-defined. While the
answer to this question is relatively straightforward in finite
dimensions, it is far from obvious in the infinite-dimensional setting 
and this is the main focus of this article.

In order to proceed, we introduce some terminology typically associated with \eqref{generci-inverse-problem}. We seek the {\it solution} or {\it parameter } $u$ in (the {\it parameter space}) $X$. The Banach space $X$ may be infinite dimensional, such as the $L^p$
 spaces for $p \ge 1$ or
 the space of continuous
functions. We consider a Borel {\it prior probability measure} $\mu_0$
on $X$. This measure will reflect our prior knowledge of the parameter $u$.
For example, if $u$ belongs to a function space then the prior measure
can dictate whether it is smooth
or merely continuous. The measurement noise
$\eta$ is assumed  $\eta \sim \varrho$ \myhl{(distributed according to $\varrho$)} where $\varrho$ is a Borel probability measure
on $Y$.

 \myhl{Given $u \in X$ we let $\varrho^u$ denote the probability
  measure of $y$ conditioned on $u$. Assuming that $\varrho^u \ll \varrho$
  (i.e. $\varrho^u$  is
  absolutely 
continuous with respect to $\varrho$) and has a density then we can define 
the {\it likelihood potential}
$\Phi(u;y) : X \times Y \to \reals$ so that
$$
  \frac{\dd \varrho^u}{\dd \varrho} (y) = \exp( -\Phi(u;y)), \qquad \int_Y
  \exp(-\Phi(u;y)) \dd\varrho(y) = 1. 
$$
 In finite dimensions $\Phi$ is  
simply the {\it conditional distribution} of
$y$ given $u$ and 
encapsulates our assumptions regarding the distribution of the noise $\eta$ as well
as those on the forward map $\mcl{G}$.
For example, let $Y = \reals^m$ and consider the additive Gaussian measurement noise
model
$$
y = \mcl{G}(u) + \eta, \quad \eta \sim \mcl{N}(0, \pmb{\Gamma}).
$$ 
Here, $\pmb{\Gamma}$ is a $m \times m$ positive 
definite matrix. Then one can use the density of $\eta$ with respect to the Lebesgue measure
in $\reals^m$ to obtain
\begin{equation}
  \label{gaussian-likelihood}
  \Phi(u;y) = \frac{1}{2} \| \pmb{\Gamma}^{-1/2} ( \mcl{G}(u) - y )  \|_2^2.
\end{equation}
} 
 
We can now 
define the {\it posterior probability 
measure} $\mu^y$ (on $X$) via {\it Bayes' rule}
\begin{equation}
  \label{bayes-rule}
  \frac{ \dd \mu^y }{\dd \mu_0} (u) = \frac{1}{Z(y)} \exp \left( -
    \Phi(u;y) \right) \quad  \text{where} \quad Z(y) = \int_X \exp(-\Phi(u; y)) \dd \mu_0(u).
\end{equation}
The Bayesian methodology for inverse problems consists of identifying  the posterior measure
$\mu^y$ \myhl{which is interpreted as an updated version of the
  prior that is informed by the data.} 
The constant $Z(y)$ is simply a normalizing constant that makes $\mu^y$ a 
probability measure on $X$. 
Equation \eqref{bayes-rule} is a generalization of the well-known Bayes' rule 
to general state spaces (see \cite[Sec.~6.6]{stuart-acta-numerica} for an in depth discussion of this generalization). Here, the relationship between the posterior and the prior is understood 
in the sense of the Radon--Nikodym theorem \cite[Thm.~3.2.2]{bogachev1} and so $\mu^y \ll \mu_0$.

Analogous to the situation for partial differential equations (PDEs),
we may ask under what conditions \eqref{bayes-rule} is
uniquely solvable, and whether the posterior probability $\mu^y$
depends continuously on the data. This is loosely what we mean by {\it
  well-posedness} of this problem, though we shall make these
definitions precise shortly. \myhl{We note that our notion of
  well-posedness of the posterior is quite different from the {\it
    consistency of the posterior} introduced by Freedman and Diaconis
  \cite{diaconis-consistency,freedman-consistency}. In their definition,
the posterior measure $\mu^y$ is consistent if it concentrates around the true
value of the unknown parameter as more and more data is collected. In contrast, well-posedness is concerned with the behavior of the
posterior $\mu^y$ when the data $y$ is perturbed and not augmented. 
}

\myhl{
Our broad goal in this paper is to develop a well-posedness theory for Bayesian inverse problems in a large class of priors. Specifically, we  study the well-posedness and consistent approximation of Bayesian
inverse problems with priors that are
{\it convex}. \myhl{Convex measures are also known as 
    ``log-concave'' distributions in the literature. We prefer 
the term ``convex measure'' due to the
connection between convex priors and convex regularization techniques
in variational inverse problems.
 This connection is investigated in detail in the recent articles \cite{burger-map, helin-MAP}
where the authors
study the maximum a posteriori points of Bayesian inverse problems with convex priors.  
}

 The central contribution of our article is the following. If $X$ is a Banach space and $\mu_0$ is a probability measure with certain properties (notably, {\it convexity}), then the Bayesian inverse problem of finding the measure $\mu^y \ll \mu_0$
given by \eqref{bayes-rule} is well-posed under reasonable assumptions on the likelihood, which will be made precise shortly.

 Our results will 
expand the class of prior measures that are available for modelling of prior information in inverse problems.
As we will see later on the class of convex measures already includes the Gaussian and Besov priors 
and so our results will unify some of the existing results in the literature. Furthermore, the class of convex measures includes many of
the priors that are commonly used in the statistics and inverse problems
literature but no theory of well-posedness exists for them,
such as the 
hierarchical priors of \cite{scott-shrink, calvetti-hierarchical}. 
 
}

The questions of well-posedness and consistent approximation of the Bayesian inverse problems have been studied in
\cite{stuart-acta-numerica, cotter-approximation} for
Gaussian priors, in \cite{dashti-besov} for Besov priors and more recently in 
\cite{iglesias-geometric} for geometric priors \myhl{and in \cite{sullivan} for heavy-tailed and stable priors.}  We now present three concrete motivating
examples of inverse problems that use convex prior measures.


\begin{example}[$\ell^1$-regularization of inverse problems]\label{example-deconvolution-ell1}

A popular form of regularization, particularly in the context of
sparse recovery, is $\ell^1$-regularization. 
Let $X = \reals^n$ and $Y = \reals^m$ for fixed integers $m, n >
0$. Suppose that $\mb{A} \in \reals^{m \times n}$ is a fixed matrix
and that the data $y$ is obtained via
  $$y = \mb{A}u + \eta $$ 
where  $u \in
\reals^n$ and $\eta \sim \mcl{N}(0, \sigma^2 \mb{I})$.
Here, $\sigma >0 $ is a fixed constant and $\mb{I} \in
\reals^{m \times m}$ is the identity matrix. Our goal in this problem
is to estimate the parameter $u$ from the data $y$.

If we employ the Bayesian perspective 
then we need to identify the likelihood potential $\Phi$ and the prior
measure $\mu_0$. 
A straightforward calculation yields
\myhl{
\begin{equation}\label{example-1-likelihood}
\Phi: \reals^n \times \reals^m \to \reals, \qquad \Phi(u;y) = \frac{1}{2\sigma^2} \left\| \mb{A} u - y \right\|_2^2.
\end{equation}
}
As for the prior measure $\mu_0$, \myhl{we postulate a model 
\begin{equation}\label{multivariate-laplace}
\frac{\dd \mu_0}{\dd \Theta}(u) = \frac{1}{(2 \lambda)^n}\exp\left( - \frac{\| u\|_1
  }{\lambda} \right)
\end{equation}}
which is a multivariate version of the Laplace distribution (see Table
\ref{tab:convex-distributions-in-1D}). Here, $\Theta$ denotes the
Lebesgue measure on $\reals^n$. \myhl{We can now use Bayes' rule \eqref{bayes-rule} to identify
the posterior measure $\mu^y$ as
$$
\frac{\dd \mu^y}{\dd \Theta}(u)  = \frac{1}{Z(y)} \exp\left( -\frac{1}{2 \sigma^2} \| \mb{A} u -y \|_2^2
    - \frac{1}{\lambda} \| u\|_1  \right).
$$}
Finding the maximizer of this density (i.e. the
maximum a posteriori (MAP) point) corresponds to solving the
optimization problem 
$$
u_{\text{MAP}} := \text{argmin}_{z \in \reals^n}  \frac{1}{2} \| \mb{A} z - y
\|_2^2 + \frac{\sigma^2}{2 \lambda} \| z\|_1.
$$
This is an instance of the well-known $\ell^1$-regularization
technique which is commonly used in recovery of sparse solutions
\cite{foucart}.
\end{example}

The prior measure \eqref{multivariate-laplace} is not Gaussian but, 
as we will see in Section \ref{sec:convex-measures}, it is convex. This example
demonstrates the potential benefits of using non-Gaussian prior
measures in a finite dimensional setting. We now consider a second example that utilizes a
non-Gaussian prior on a function space. 
This example can also be viewed as an infinite dimensional analog of
Example 1.


\begin{example}[Deconvolution]\label{example-deconvolution} 
Let $X = L^2(\mbb{T})$ where $\mbb{T}$ is the circle of radius
$(2\pi)^{-1}$ and let $Y = \reals^m$ for a fixed integer $m$. Let $S : C(\mbb{T}) \to
\reals^m$ be a bounded linear operator that collects point values of a continuous function on a collection of $m$ points over $\mbb{T}$. Finally, given a fixed kernel $g \in C^\infty(\mathbb{T})$,
define
the forward map $\mcl{G}{(\cdot)}:X\rightarrow Y$ as 
\begin{equation}\label{deconvolution-forward-map}
\mcl{G}(u) = S( g \ast u) \qquad \text{where} \qquad (g\ast
u)(x) :=  \int_{\mbb{T}} g(x-y) u(x) dy
. 
\end{equation}
Now suppose that the data $y
$ is obtained via 
$$
y = \mcl{G}(u) + \eta
$$
where $u \in L^2(\mbb{T})$,
$\eta \sim \mcl{N}(0, \sigma^2 \mb{I})$,  $\sigma >0$
is a fixed constant and $\mb{I}$ is the $m \times m$ identity
matrix.
Our goal is to approximate $u \in L^2(\mbb{T})$ from the data $y$. 
Our assumptions imply a likelihood potential \myhl{of a similar form to \eqref{example-1-likelihood}},
\myhl{
\begin{equation}\label{deconvolution-likelihood}
\Phi: L^2(\mbb{T}) \times \reals^m \to \reals, \qquad \Phi(u;y) = \frac{1}{2\sigma^2} \| \mcl{G}(u) - y \|_2^2.  
\end{equation}
}
We now identify the prior measure $\mu_0$ via spectral expansion of its
samples.
 Let $\{ \psi_k \}_{k\in \mathbb{Z}}$ denote the  Fourier basis 
in $L^2(\mathbb{T})$ and
 consider a model of the form
\begin{equation}\label{deconvolution-prior}
\begin{aligned}
& u := \sum_{k\in \mathbb{Z}} \gamma_k \xi_k \psi_k,  \\
\mbox{where} \quad &\xi_k \sim {\text{\normalfont Lap}}(0,1), \quad \gamma_k:=(1 + |k|^2)^{-5/4}, \quad \forall k \in \mathbb{Z}.
\end{aligned}
\end{equation}
The distribution function of the Laplace random variable $\xi$ is
given in Table \ref{tab:convex-distributions-in-1D}.
The prior measure $\mu_0$ will be the probability measure induced by
the random variable $u$. The posterior $\mu^y$ can be
identified via \eqref{bayes-rule}. In order to demonstrate the
potential benefits of using this prior measure 
we shall discretize the problem by truncating the spectral expansion
in \eqref{deconvolution-prior}.

\myhl{
Consider the projection
$$
\Psi_N: L^2(\mbb{T}) \to L^2(\mbb{T}) \qquad 
 \Psi_N (u):= \sum_{k=-N}^{N-1} \langle u, \psi_k \rangle \psi_k,
$$ 
where $\langle \cdot, \cdot \rangle$ denotes the usual $L^2$-inner product. 
We
can approximate the likelihood potential  as 
$$
\Phi_N : L^2(\mbb{T}) \times \reals^m \to \reals, \qquad \Phi_N(u;y) = \frac{1}{2\sigma^2} \| \mcl{G}\left(\Psi_N (u) \right) - y
\|_2^2
$$
and apply Bayes' rule to obtain 
$$
\frac{\dd \mu_N^y}{\dd \mu_0}(u) \propto \exp \left( -\frac{1}{2\sigma^2} \| \mcl{G}\left(\Psi_N (u) \right) - y
\|_2^2 \right
).
$$
Here we think of $\Phi_N$  and $\mu^y_N$ as approximations to the true likelihood
$\Phi$ and posterior $\mu^y$ respectively. 
Now considering the MAP point of $\mu^y_N$ corresponds
to an $\ell^1$-regularized optimization problem similar to Example 1 \cite{lassas-sparsity-promoting}.
}

\end{example}

The above example is a 
linear inverse problem due to the fact that we assume that the
convolution kernel $g$ is known. If the convolution kernel $g$ is unknown then
this problem is known as the blind-deconvolution problem, which gives rise to a
nonlinear inverse problem. The deconvolution problem
 is a classic ill-posed problem that arises
widely in  optics and imaging, especially in deblurring
applications \cite{vogel, hansen-deblurring}.

The
connection between $\ell^1$-regularization and the Laplace prior is
well-known \cite{somersalo,
  lassas-sparsity-promoting, burger-map} and it serves as motivation
for the study of non-Gaussian prior measures. 
 \myhl{One can prove the
  well-posedness of 
Example 2 using the already established theory of
well-posedness for Besov
priors \cite{lassas-invariant, dashti-besov}. In this article, we
present an 
alternative proof of well-posedness for this problem using a more
general framework. More importantly, our 
well-posedness results will
include more interesting choices of prior measures. We now present an example of such a prior
measure in the context of deconvolution.}

\begin{example}[Deconvolution with a hierarchical prior]\label{deconvolution-hierarchical}
 \myhl{Consider the deconvolution problem of Example
2. Now assume 
the prior samples have the form 
\begin{equation*}
u = \sum_{k \in \mbb{Z}} \gamma_k \zeta_k \xi_k \psi_k
\end{equation*}
where $\gamma_k = (1+ |k|^2)^{-1}$ and the $\psi_k$ are the
Fourier basis functions on $L^2(\mbb{T})$. Let $\{ \zeta_k \}$ and
$\{ \xi_k \}$ be two sequences of i.i.d.\  random variables so that
$\zeta_k \sim \text{Gamma}(2,1)$ and $\xi_k \sim \mcl{N}(0,1)$ (refer
to Table \ref{tab:convex-distributions-in-1D} for the density of the
Gamma random variable). This
construction of the prior can be thought of as a hierarchical prior
model where the modes $\xi_k$ are assumed to be Gaussian random
variables with unknown variances, since $\pi( \xi_k\zeta_k | \zeta_k) = 
\mcl{N}(0, \zeta_k^2)$.
 We can think of the $\zeta_k$ as
a model for the standard deviation of the $\xi_k$.}
\end{example}
  
\myhl{Hierarchical priors
of this form are common in the literature \cite{somersalo,
  calvetti-hierarchical, agapiou, scott-shrink, scott-horseshoe} and
have wide applications. The prior $\mu_0$ that is
induced by the random variable $u$ above is not of the Besov or
Gaussian form and so previous well-posedness results no longer
apply.}

 \myhl{As an immediate application of our theoretical results,  we will be able to
 conclude that the inverse problem in Example 1 and 2 are well-posed. \myhl{Furthermore, we can prove
 the well-posedness of Example 3
by using the fact that Gamma and Gaussian
distributions are convex. We return to the proof of these results in 
Section \ref{sec:examples}.}}

The rest of this article is organized as follows. First we precisely
define the notion of well-posedness and consistent approximation in the next subsection. In Section \ref{sec:well-posedness} we
present a set of general conditions on $\Phi$ and $\mu_0$
which guarantee well-posedness and consistent approximation of
Bayesian inverse problems. Throughout this section we will not assume
that $\mu_0$ is convex.
 In Section \ref{sec:convex-measures}  we collect 
some results about convex probability measures on Banach spaces and show that 
as priors, these measures will result in well-posed inverse
problems. Section \ref{sec:examples} is devoted to 
a general framework for the construction of convex priors on separable Banach spaces 
by means of countable products of one dimensional convex measures. In
Section \ref{sec:examples} we return to consistent approximation of the posterior
measure and we present sharper results concerning the convergence of
the approximate posterior. At the end of this section 
we present four example problems that use convex prior measures.



\subsection{Key definitions} 
We gather here some key definitions and assumptions. In what follows,
we shall consider the prior probability $\mu_0$ to be in the class of
Radon probability measures on $X$. That is, $\mu_0$ will be an inner
regular probability measure on the Borel sets of $X$, \myhl{(meaning
  that the measure of every set can be approximated by the measure of
  compact sets from within)}. \myhl{We also assume that $\mu_0$ is a 
complete measure i.e., subsets of sets of $\mu_0$-measure zero are measurable. Throughout this article 
the symbol $\nu$ denotes a generic probability measure that is used in the proofs and technical arguments
 and its definition is presented in each context.}
We use the shorthand notation $a \lesssim b$ for $a,b \in \reals^+$
 when there exists a constant $C > 0$ independent of $a,b$ such that $ 0<a \le C
 b$.

\myhl{To make precise the notion of distance between measures,} we shall use the Hellinger metric 
on the space of probability measures on $X$. \myhl{Assuming that $\mu_1$ and $\mu_2$ 
are both absolutely continuous with respect to a third measure $\Lambda$, then the Hellinger distance is defined as
\begin{equation}
  \label{hellinger-metric}
  d_H( \mu_1, \mu_2) := \left( \frac{1}{2} \int_X  \left( \sqrt{ \frac{\dd \mu_1}{ \dd \Lambda}(u) } - \sqrt{ \frac{\dd \mu_2}{ \dd \Lambda}(u)}  \right)^2 \dd \Lambda(u) \right)^{1/2} .
\end{equation}
Alternatively, one can also work with the total variation metric 
\begin{equation}
  d_{TV}( \mu_1, \mu_2) := \frac{1}{2} \int_X  \left|  \frac{\dd \mu_1}{ \dd \Lambda}(u)  -  \frac{\dd \mu_2}{ \dd \Lambda}(u)  \right| \dd \Lambda(u).
\end{equation}
The Hellinger and total variation  metrics are independent of the choice of the measure $\Lambda$ \cite[Lem.~4.7.35]{bogachev1}. 
Furthermore, they impose equivalent topologies due to the following 
set of inequalities
\cite[Thm.~4.7.35]{bogachev1}. 
$$
2 d_{H} (\mu_1, \mu_2) \le d_{TV}(\mu_1, \mu_2) \le \sqrt{8}
d_{H}(\mu_1, \mu_2).
$$
Thus, convergence in one metric implies convergence in the other.}
We note that the Hellinger metric  bounds the difference in the expectations of certain
functions in a particularly simple manner. Suppose that $h: X \to \reals$ is a function so 
that $\int_X h^2(u) \dd \mu_1(u) < \infty$ and $\int_X h^2(u) \dd \mu_2
< \infty$. Then using the Radon--Nikodym theorem and H\"{o}lder's
inequality we have
$$
\begin{aligned}
& \left| \int_X h(u)  \right.\left. \dd \mu_1 (u)  - \int_X h(u) \dd \mu_2(u) \right|
 \\ 
& \le \int_X \left| h(u)  \left(\sqrt{\frac{\dd \mu_1}{ \dd \Lambda}(u)}+ \sqrt{\frac{\dd \mu_2}{
    \dd \Lambda}(u)}  \right) \right| \left| \left(\sqrt{\frac{\dd \mu_1}{ \dd \Lambda}(u)} - \sqrt{\frac{\dd \mu_2}{
    \dd \Lambda}(u)}  \right)  \right| 
  \dd \Lambda(u) \\
& \le \left( \int_X \left| h(u)  \left(\sqrt{\frac{\dd \mu_1}{ \dd \Lambda}(u)} + \sqrt{\frac{\dd \mu_2}{
    \dd \Lambda}(u)}  \right) \right|^2 \dd \Lambda(u) \right)^{1/2} 
\left( \int_X  \left| \left(\sqrt{\frac{\dd \mu_1}{ \dd \Lambda}(u)} - \sqrt{\frac{\dd \mu_2}{
    \dd \Lambda}(u)}  \right)  \right|^2 
  \dd \Lambda(u)\right)^{1/2}  \\
& \le {2} \left( \int_X h^2(u) \dd \mu_1(u) + \int_X h^2(u) \dd
\mu_2(u)\right)^{1/2} d_H( \mu_1, \mu_2).
\end{aligned}
$$ 
\begin{definition}[Hellinger Well-posedness] \label{def-existence-uniqueness}
Suppose that $X$ is a Banach space and \myhl{$d_H( \cdot, \cdot)$} is the Hellinger metric on the space of
Borel probability measures on $X$.
Then for a choice of the
prior measure $\mu_0$ and the likelihood potential $\Phi$, the Bayesian inverse problem given by \eqref{bayes-rule}
 is called well-posed if:
  \begin{enumerate}
  \item (Existence and uniqueness) There exists a unique posterior probability measure $\mu^y \ll \mu_0$ given by \eqref{bayes-rule}.
  \item (Stability) Given $\epsilon>0$, there is a constant $ C>0$ such that if $\|y-y'\|_Y <C, d_H( \mu^y , \mu^{y'} ) <\epsilon$.
  \end{enumerate}
\end{definition}
We will also define the notion of {\it consistent approximation} of a Bayesian inverse problem in the context of 
practical applications where one often discretizes the likelihood and 
approximates the posterior by sampling \cite{stuart-mcmc, somersalo}. 
\myhl{Let $\Phi_N: X \times Y \to \reals$ denote an approximation
to $\Phi$ that is parameterized by $N$}, and define an approximation $\mu^y_N$ to 
$\mu^y$ via 
\begin{equation}\label{discretized-bayes-rule}
\frac{\dd \mu^y_N}{\dd \mu_0} = \frac{1}{Z_N(y)} \exp( - \Phi_N(u; y))
\qquad \text{where} \qquad Z_N(y) = \int_{X} \exp(- \Phi_N(u;y) )\dd \mu_0(u). 
\end{equation}

\begin{definition}[Consistent approximation] \label{def-stability}
  The approximate Bayesian inverse problem
  \eqref{discretized-bayes-rule} is a consistent approximation to \eqref{bayes-rule} for a choice of $\mu_0$, 
$\Phi$ and $\Phi_N$ if $d_H( \mu^y, \mu^y_N ) \to 0$ as $| \Phi(u;y) - \Phi_N(u;y) | \to 0$.
\end{definition}


\section{Well-posedness}\label{sec:well-posedness}
In this section we collect certain conditions on the prior measure $\mu_0$ and the likelihood potential 
$\Phi$ that result in well-posed inverse problems. 
\myhl{The results in this section are applicable to exponentially tailed prioirs.}
 We emphasize that the
assumption of convexity of the prior measure is not necessary and will only be considered in Section \ref{sec:convex-measures}. 

Following \cite{dashti-besov} we start with a collection of assumptions on the likelihood potential $\Phi$.
\begin{assumption} \label{assumption-on-likelihood}
Suppose that $X$ and $Y$ are Banach spaces.
Then the function $\Phi: X \times Y \to \reals$ has the
  following properties \label{forward-assumption}:
  \begin{enumerate}[(i)]
  \item (Lower bound in $u$): For an $\alpha_1\ge 0$ and every $r>0$, there is a
    constant $M(\alpha_1,r) \in \reals$ such that $\forall u \in X$
    and $\forall y \in Y$ with $\| y \|_Y < r$, $$\Phi(u;y) \ge M -
    \alpha_1 \| u\|_X. $$

    \item (Boundedness above) For every $r>0$ there is a constant
      $K(r)>0$ such that $\forall u\in X$ and $\forall y \in Y$ with 
$\max \{ \| u \|_X, \|y \|_Y \} < r $, $$ \Phi(u;y) \le K. $$

      \item (Continuity in $u$) For every $r>0$ there exists a 
constant $L(r)>0$ such that $\forall u_1,u_2 \in X$ and $y\in Y$ with 
$\max \{ \|u_1 \|_X, \|u_2 \|_X, \|y \|_Y \} < r$, $$ | \Phi(u_1;y) -
\Phi(u_2,y)| \le L \| u_1 - u_2 \|_X. $$ 

              \item (Continuity in $y$)
For an $\alpha_2 \ge 0$ and for every $r>0$, there is a 
constant $C(\alpha_2, r) \in \reals$ such that $\forall y_1,y_2 \in Y$ with 
$\max \{ \|y_1 \|_Y, \|y_2 \|_Y \} < r$ and $\forall u \in X$, $$ | \Phi(u;y_1) -
\Phi(u,y_2)| \le \exp( \alpha_2 \| u\|_X + C ) \| y_1 - y_2 \|_Y. $$ 
  \end{enumerate}
\end{assumption}
 These four assumptions are
required to ensure the well-posedness of the inverse problem according
to Definition \ref{def-existence-uniqueness}. For example, conditions (i) to (iii) are used
to show the existence and uniqueness of the posterior 
in Theorem \ref{existence-uniqueness}
while (i),(ii) and
(iv) are used to show continuous dependence of the posterior on the
data in Theorem
\ref{robustness}. \myhl{We note that these assumptions are mild and 
hold in many practical applications such as in the case of 
finite dimensional data and additive noise models (see the examples
considered in Section \ref{sec:examples} for details). However, one can still
construct examples where these assumptions no longer hold. We
now present a brief example where Assumption \ref{assumption-on-likelihood}(i)
is not satisfied.} 
\begin{example}(Multiplicative noise model) 
\myhl{  Let $Y = \reals$ and consider the model
$$
y = \eta \mcl{G}(u), \qquad \eta \sim \mcl{U}(-1,1), \qquad
\mcl{G}(u) = \| u\|_X.
$$
Thus, the measurement noise is multiplicative. As before let $\varrho^u$
denote the measure of $y$ conditioned on $u$ and let $\varrho^{u,\eta}$ 
denote the measure of $y$ conditioned on both $u$ and $\eta$. 
Letting ${\delta}$ denote the Dirac delta distribution we can write
$$
\varrho^{u, \eta} (y) = {\delta}( y - \eta \mcl{G}(u)). 
$$
 We obtain the distribution of $y$
conditioned 
on $u$ as
$$
\begin{aligned}
  \frac{\dd \varrho^u}{\dd \Theta}(y) = \frac{1}{2} \int_{\reals} {\delta}(y - \eta
  \mcl{G}(u))
  \mb{1}_{(-1,1)}(\eta) \dd \Theta(\eta) 
 = \frac{1}{2\mcl{G}(u)}  \mb{1}_{(-1,1)}\left( \frac{y}{\mcl{G}(u)}
   \right),
\end{aligned}
$$
where $\Theta$ is the Lebesgue measure as before and the above integral is understood in the sense of
distributions. 
Therefore, the likelihood potential $\Phi$ takes the form 
$$
\Phi(u;y) = \left\{ 
\begin{aligned}
&\ln(\|u\|_X) \qquad &&\|u\|_X \in [0,y) \\
& \infty \qquad && \text{Otherwise}.
\end{aligned}
\right.
$$
Clearly,
 this form of $\Phi$ does not satisfy
Assumption \ref{assumption-on-likelihood}(i) and (ii).
 }
\end{example}

We also impose certain conditions on the prior measure $\mu_0$. Recall that $\mu_0$ by assumption is a Radon measure on $X$, and $\mu_0(X)=1$. 

\begin{assumption} \label{assumption-on-prior}
  The Radon probability measure $\mu_0$ on the Banach space $X$ has
  exponential tails i.e.
    \begin{equation}\label{eq:kappadef} \exists \kappa>0 \quad s.t. \quad  \int_X \exp( \kappa \| u \|_X) \dd \mu_0(u) < \infty\end{equation}
\end{assumption}
The inner regularity assumption on the prior is not very restrictive in practice since 
one often works in separable Banach or Hilbert spaces where
all Borel probability measures are automatically Radon \cite[Thm.~1.2.5]{bogachev-malliavin}.
Assumption \ref{assumption-on-prior} constrains the well-posedness results of this paper to prior measures $\mu_0$ \myhl{that have exponential tails}. We borrow the term ``exponential tails'' from the
theory of probability distributions in finite dimensions. For example
if $\mu_0$ were a measure on $\reals$ with a Lebesgue density $\pi_0(x)$ 
then the condition in
\eqref{eq:kappadef} would reduce to the requirement that $\pi_0(x)$
should decay like an exponential as $|x| \to \infty$.
Assumption \ref{assumption-on-prior} is central
to many of the arguments leading to the proof of the well-posedness
and consistent approximation.

We start with the question of existence and uniqueness of the posterior. 
\begin{theorem} \label{existence-uniqueness}
Suppose that $X$ and $Y$ are Banach spaces and 
let the likelihood function $\Phi:X\times Y \to \mathbb{R}$ satisfy
Assumptions \ref{assumption-on-likelihood} (i), (ii) and (iii) with
some constant $\alpha_1$. Also let the prior Radon probability measure $\mu_0$ satisfy 
Assumption \ref{assumption-on-prior}, with a constant $\kappa>0$. If
$\kappa  \ge \alpha_1$ then the posterior $\mu^y$ given by \eqref{bayes-rule} is a well-defined
Radon probability measure on $X$.
\end{theorem}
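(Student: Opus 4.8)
The plan is to show that the normalizing constant in \eqref{bayes-rule} satisfies $0 < Z(y) < \infty$, from which the Radon--Nikodym density is well-defined, and then to confirm that the resulting $\mu^y$ is a genuine Radon probability measure. Fix $y \in Y$ and choose $r > 0$ with $\|y\|_Y < r$, so that the constants $M(\alpha_1,r)$ and $K(r)$ from Assumption \ref{assumption-on-likelihood} are available. The finiteness of $Z(y)$ is the crux of the argument and the only place where the balance condition $\kappa \ge \alpha_1$ is used. By the lower bound Assumption \ref{assumption-on-likelihood}(i) we have $\Phi(u;y) \ge M - \alpha_1\|u\|_X$, hence $\exp(-\Phi(u;y)) \le e^{-M}\exp(\alpha_1\|u\|_X)$. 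Since $\|u\|_X \ge 0$ and $\alpha_1 \le \kappa$, this is dominated pointwise by $e^{-M}\exp(\kappa\|u\|_X)$, and Assumption \ref{assumption-on-prior} gives
$$
Z(y) \le e^{-M}\int_X \exp(\alpha_1\|u\|_X)\,\dd\mu_0(u) \le e^{-M}\int_X \exp(\kappa\|u\|_X)\,\dd\mu_0(u) < \infty .
$$

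Next I would establish $Z(y) > 0$. The boundedness Assumption \ref{assumption-on-likelihood}(ii) supplies $K(r)$ with $\Phi(u;y) \le K$ on the ball $B_r := \{ u \in X : \|u\|_X < r \}$, so $\exp(-\Phi(u;y)) \ge e^{-K}$ there and therefore $Z(y) \ge e^{-K}\mu_0(B_r)$. Because $\mu_0$ is a probability measure we have $\mu_0(B_r) \to 1$ as $r \to \infty$, so after enlarging $r$ if necessary we may assume $\mu_0(B_r) > 0$, giving $Z(y) > 0$. Measurability of $u \mapsto \exp(-\Phi(u;y))$ follows from the local Lipschitz continuity in Assumption \ref{assumption-on-likelihood}(iii), which renders $\Phi(\cdot;y)$ Borel measurable. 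With $0 < Z(y) < \infty$ and a nonnegative, measurable, $\mu_0$-integrable density, the formula $\mu^y(A) := Z(y)^{-1}\int_A \exp(-\Phi(u;y))\,\dd\mu_0(u)$ defines a Borel probability measure satisfying $\mu^y \ll \mu_0$.

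It remains to check that $\mu^y$ is Radon, i.e. inner regular with respect to compact sets. Writing $f := Z(y)^{-1}\exp(-\Phi(\cdot;y)) \in L^1(\mu_0)$, the integrability of $f$ yields the $\varepsilon$-$\delta$ form of absolute continuity: for every $\varepsilon > 0$ there is $\delta > 0$ such that $\mu_0(B) < \delta$ implies $\mu^y(B) < \varepsilon$. Given any Borel set $A$, the inner regularity of the Radon measure $\mu_0$ produces a compact $K \subseteq A$ with $\mu_0(A \setminus K) < \delta$, whence $\mu^y(A \setminus K) < \varepsilon$; thus $\mu^y$ inherits inner regularity from $\mu_0$ and is Radon. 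I expect the finiteness bound on $Z(y)$ to be the main point of substance, since it is precisely there that the exponential growth rate $\alpha_1$ of the likelihood's lower bound must be absorbed by the exponential tail rate $\kappa$ of the prior; the positivity and the Radon property are routine once the density is known to be $\mu_0$-integrable.
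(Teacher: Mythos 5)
Your proposal is correct and follows essentially the same route as the paper's own proof: bound $Z(y)$ above via Assumption \ref{assumption-on-likelihood}(i) together with the exponential tail condition $\kappa \ge \alpha_1$, bound it below via Assumption \ref{assumption-on-likelihood}(ii) and positivity of $\mu_0$ on some norm ball, obtain measurability from the continuity in Assumption \ref{assumption-on-likelihood}(iii), and let $\mu^y$ inherit the Radon property from $\mu_0$. The only (harmless) differences are that you prove $\mu_0(\{\|u\|_X < R\}) > 0$ by continuity from below rather than the paper's annulus decomposition, and you verify inner regularity of $\mu^y$ directly via absolute continuity of the integral where the paper cites a lemma of Bogachev.
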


\begin{proof}
  Our proof will closely follow the method of \cite[Thm.~4.1]{stuart-acta-numerica}. Assumption \ref{assumption-on-likelihood}(iii) ensures the 
continuity of $\Phi$ on $X$ \myhl{and so $\Phi$ is $\mu_0$-measurable. Recall the normalization constant $Z(y):=
\int_X \exp(-\Phi(u; y)) \dd \mu_0(u)$. It remains for us to show that
$0<Z(y) <\infty$ in order to 
conclude that $\mu^y$ is well-defined.} The fact that 
$\mu^y$ is Radon will then follow from the assumption that $\mu_0$ is Radon \cite[Lem.~7.1.11]{bogachev2}.
To show the boundedness of the normalizing constant, we use Assumption \ref{assumption-on-likelihood}(i) to get
$$
\begin{aligned}
  Z(y) &= \int_X \exp( - \Phi(u;y) ) \dd \mu_0(u)  \\
   & \le \int_X \exp( \alpha_1\| u\|_X - M ) \dd \mu_0(u) = \exp(-M) \int_X \exp(\alpha_1 \| u\|_X) \dd \mu_0(u),
\end{aligned}
$$
which is {bounded when $\alpha_1 \le \kappa$}. 

We now need to show that the normalizing constant $Z(y)$ does not
vanish.
It follows from Assumption \ref{assumption-on-likelihood}(ii) that for each $R > 0$
$$
\begin{aligned}
  Z(y) & = \int_X \exp( - \Phi(u;y)) d\mu_0(u)\\
&   \ge 
 \int_{\{\| u \|_X < R\}} \exp(-K) \dd \mu_0(u) = \exp(-K) \mu_0( \{ \| u \|_X < R \}).
\end{aligned}
$$
\myhl{In order to see that $\mu_0( \{ \| u \|_X < R \} ) > 0$
for large enough $R$, consider the disjoint sets $A_k :=\{ u | k-1\le \| u\|_X <
k\}$ for $k \in \integers$. The $A_k$ are open and hence 
measurable and $\sum_{k=1}^\infty \mu_0(A_k) =
\mu_0(\bigcup_{k=1}^\infty A_k) = \mu(X) = 1$. Then the measure of at least one
of the $A_k$ has to be nonzero. }
\end{proof}

\myhl{
Observe that Assumptions \ref{assumption-on-likelihood}(i) and (ii) are needed for the existence and uniqueness of the posterior measure. Therefore, in the case of Example 4 above where both of these assumptions are violated, we 
are unable to prove the existence and uniqueness of the posterior measure. Furthermore, note that in the 
case where the likelihood potential $\Phi$ is bounded from below, i.e. Assumption \ref{assumption-on-likelihood}(i)
is satisfied with $\alpha_1 =0$, we no longer require the prior $\mu_0$ to have exponential tails  for the 
posterior $\mu^y$ to be well-defined. However, we still require the  exponential tails assumption
 in Theorem~\ref{robustness} in order to show the stability of the posterior.  
Finally, 
We note that an alternative proof of the fact 
that $\mu_0( \{ \| u \|_X < R \} ) > 0$
can be obtained by
using the following theorem concerning the concentration of Radon measures on Banach spaces which is interesting in and of itself.

\begin{theorem}[{\cite[Thm.~7.12.4]{bogachev2}}] \label{concentration-of-measure}
Let $\mu$ be a Radon probability measure on a Banach space $X$. Then there exists
a reflexive and separable Banach space $(E, \| \cdot \|_E)$ embedded in $X$ such that $\mu(X \setminus E) =0$ and the closed balls of $E$ are compact in $X$. 
\end{theorem}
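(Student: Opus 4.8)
The plan is to combine the inner regularity (tightness) of the Radon measure $\mu$ with the classical interpolation construction of Davis, Figiel, Johnson and Pe\l{}czy\'{n}ski (DFJP), which factors a compact embedding through a reflexive space. First I would invoke tightness: since $\mu$ is inner regular with respect to compact sets and $\mu(X)=1$, I can pick compact $K_n \subset X$ with $\mu(K_n) > 1 - 1/n$, and after replacing $K_n$ by $\bigcup_{j\le n}K_j$ (a finite union of compacts, hence compact) I may assume the $K_n$ are increasing and $\mu\!\left(\bigcup_n K_n\right) = 1$. The goal is then to build a single reflexive separable Banach space $E$ that contains $\bigcup_n K_n$ and embeds compactly into $X$.

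Second, I would compress the whole sequence into one compact, convex, balanced set. Each $K_n$ is bounded, say $K_n \subset R_n B_X$, so choosing scalars $\lambda_n$ with $R_n/\lambda_n \le 2^{-n}$ makes $S := \{0\} \cup \bigcup_n \lambda_n^{-1} K_n$ compact (a sequence in $S$ either stays in a single compact piece $\lambda_n^{-1}K_n$ or is driven toward $0$). By Mazur's theorem the closed absolutely convex hull $W := \overline{\mathrm{aco}}(S)$ is again compact, convex and balanced. Since $K_n \subset \lambda_n W$, the span $X_W := \bigcup_m m W$ contains $\bigcup_n K_n$, so $\mu(X_W)=1$; moreover $X_W$ is a countable union of compacts, hence Borel, and lies in the separable closed subspace $X_0 := \overline{\mathrm{span}}(W)$. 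Replacing $X$ by $X_0$ costs nothing because $\mu(X_0)=1$, and it secures separability from the start.

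Third, and this is the crux, I would apply the DFJP interpolation to $W$ (rescaled so $W \subset B_{X_0}$). With $U_n := 2^n W + 2^{-n} B_{X_0}$, gauges $\|\cdot\|_n$, and $\|x\|_E := \left(\sum_n \|x\|_n^2\right)^{1/2}$, $E := \{x : \|x\|_E < \infty\}$, one verifies that $(E,\|\cdot\|_E)$ is a Banach space, that the inclusion $j:E\to X$ is continuous, and that $W \subset B_E$, whence $X_W \subset E$ and $\mu(E)=1$. Two properties make this the right tool: norm-compactness of $W$ forces $j$ to be a compact operator, and relative weak compactness of $W$ (automatic for a norm-compact set) forces $E$ to be reflexive; separability of $E$ is inherited as a subset of the separable space $X_0$. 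Reflexivity also yields the stated conclusion about balls: a closed ball $\bar B_E$ is weakly compact in $E$, so $j(\bar B_E)$ is weakly compact in $X$ and therefore norm-closed, while compactness of $j$ makes it relatively norm-compact, and a norm-closed relatively compact set is compact.

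The main obstacle is reflexivity. Producing \emph{some} Banach space that embeds compactly into $X$ and carries $\mu$ is easy, since the Minkowski gauge of $W$ already does this, but such a space need not be reflexive. The DFJP square-function interpolation is exactly what upgrades the (weak) compactness of $W$ into reflexivity of $E$, and checking its properties, namely completeness of $E$, continuity and compactness of $j$, and reflexivity from relative weak compactness of $W$, is the technical heart of the argument.
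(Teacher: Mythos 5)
Your overall architecture is sound, and it is worth noting that the paper itself gives no proof of this statement---it is quoted verbatim from Bogachev---so the right comparison is with the proof behind the cited result, which is exactly the route you take: tightness of the Radon measure, compression of the compacts $K_n$ into a single compact absolutely convex set $W$, and the Davis--Figiel--Johnson--Pe\l{}czy\'{n}ski factorization applied to $W$. The tightness step, the construction and compactness of $W$, the conclusions $\mu(X_W)=1$ and $X_W\subset E$, the deduction of reflexivity of $E$ from the (weak) compactness of $W$, the compactness of the inclusion $j:E\to X$ from the norm compactness of $W$, and your closing argument that closed balls of $E$ are compact in $X$ (weak compactness of $\bar{B}_E$ makes $j(\bar{B}_E)$ weakly compact, hence norm closed; compactness of $j$ makes it relatively norm compact; closed plus relatively compact equals compact) are all correct.

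The one genuine flaw is the sentence ``separability of $E$ is inherited as a subset of the separable space $X_0$.'' Being a subset of a separable metric space gives separability of $E$ only in the topology induced by $\|\cdot\|_{X_0}$, which is strictly coarser than the $E$-norm topology, and separability does not pass from a coarser to a finer norm: for instance, $(x_n)\mapsto(2^{-n}x_n)$ is a bounded injection of the non-separable space $\ell^\infty$ into the separable space $\ell^2$, so a Banach space continuously and injectively embedded in a separable space need not itself be separable. The gap is repairable with ingredients you have already established. Since $E$ is reflexive, $\bar{B}_E$ is weakly compact, and $j:(\bar{B}_E,\sigma(E,E^*))\to(X,\sigma(X,X^*))$ is a continuous injection from a compact space into a Hausdorff space, hence a homeomorphism onto its image; since $j$ is a compact operator, $j(\bar{B}_E)$ is norm compact (it is weakly compact, hence norm closed, and relatively norm compact), and on a norm compact set the weak and norm topologies of $X$ coincide. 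Therefore $(\bar{B}_E,\sigma(E,E^*))$ is compact metrizable, so it contains a countable weakly dense subset $D$; by Mazur's theorem the $\|\cdot\|_E$-closed span of $D$ is weakly closed and thus contains $\bar{B}_E$, whence $E=\overline{\mathrm{span}}(D)$ is separable. With that substitution your argument is complete and matches the cited proof.
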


The measure $\mu_0$ is supported on the separable space $E$  and 
the closed balls of $E$ are compact in $X$.
 Given $R > 0$ we can
find $R'>0$ so that $\{ \| u\|_E \le R'\} \subseteq \{ \| u\|_X < R\}$. If the measure of centred closed balls of $E$ are zero 
then the measure of all balls of $E$ would have to be zero. Since $E$ is separable,  it can be 
covered by a countable union of balls which would imply $\mu_0(E) = 0$. This  contradicts the fact that $\mu_0$ is concentrated on $E$.}
Also, observe that since $\mu^y \ll \mu_0$ it follows from the definition of absolute continuity that 
$\mu^y(X \setminus E) =0$ as well. Then the posterior $\mu^y$ is also
concentrated on the same separable subspace of 
$X$.

 We now establish the stability of Bayesian inverse problems with respect to 
perturbations in the data.
\begin{theorem}\label{robustness}
  Suppose that $X$ is a Banach space, $\Phi$ satisfies Assumptions
  1(i), (ii) and (iv) with constants $\alpha_1,\alpha_2 \ge0$ and
 let $\mu_0$ 
satisfy Assumption \ref{assumption-on-prior} with a constant $\kappa>0$. Let 
$\mu^y$ and $\mu^{y'}$ be two measures defined via \eqref{bayes-rule} for $y$ and $y' \in Y$,
 both absolutely continuous with respect to 
$\mu_0$. If $\kappa \ge \alpha_1 + 2 \alpha_2$ then  there exists 
a constant $C(r) > 0$ \myhl{such that whenever} $\max\{ \| y \|_Y, \| y' \|_Y \} < r$, 
$$
d_H (\mu^y, \mu^{y'}) \le C \| y - y' \|_Y.
$$
\end{theorem}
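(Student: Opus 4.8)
The plan is to follow the standard template for Hellinger stability estimates (as in \cite{stuart-acta-numerica}), taking the reference measure $\Lambda = \mu_0$ in the definition \eqref{hellinger-metric}. Writing the two densities from \eqref{bayes-rule} explicitly, the squared Hellinger distance becomes
$$
2\, d_H(\mu^y, \mu^{y'})^2 = \int_X \left( Z(y)^{-1/2} e^{-\Phi(u;y)/2} - Z(y')^{-1/2} e^{-\Phi(u;y')/2} \right)^2 \dd\mu_0(u).
$$
First I would insert and subtract the cross term $Z(y)^{-1/2} e^{-\Phi(u;y')/2}$, split the integrand using $(a+b)^2 \le 2a^2 + 2b^2$, and thereby reduce the estimate to two pieces: one, call it $I_1$, controlling the variation of the likelihood at fixed normalization, and another, $I_2$, controlling the variation of the normalizing constants $Z(y)$ and $Z(y')$.

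For $I_1 = Z(y)^{-1}\int_X (e^{-\Phi(u;y)/2} - e^{-\Phi(u;y')/2})^2 \dd\mu_0$, I would apply the mean value theorem to $t \mapsto e^{-t/2}$, producing a factor $\tfrac14 e^{-\xi}$ with $\xi$ lying between $\Phi(u;y)$ and $\Phi(u;y')$. The lower bound Assumption \ref{assumption-on-likelihood}(i) gives $e^{-\xi} \le \exp(\alpha_1\|u\|_X - M)$, while the continuity Assumption \ref{assumption-on-likelihood}(iv) gives $|\Phi(u;y)-\Phi(u;y')|^2 \le \exp(2\alpha_2\|u\|_X + 2C)\|y-y'\|_Y^2$. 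The two exponents combine to $(\alpha_1 + 2\alpha_2)\|u\|_X$, so the resulting integral is finite precisely by Assumption \ref{assumption-on-prior} under the hypothesis $\kappa \ge \alpha_1 + 2\alpha_2$. This is exactly where the factor $2$ on $\alpha_2$ enters the hypothesis: it is forced by the squaring inside the Hellinger metric.

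For $I_2 = (Z(y)^{-1/2} - Z(y')^{-1/2})^2\, Z(y')$, I would use the mean value theorem again, this time for $t \mapsto t^{-1/2}$, bounding $|Z(y)^{-1/2} - Z(y')^{-1/2}|$ by a multiple of $|Z(y)-Z(y')|$ times a negative power of $\min(Z(y),Z(y'))$. The difference $|Z(y) - Z(y')|$ is then estimated by the same combination of Assumptions \ref{assumption-on-likelihood}(i) and (iv) applied to $t \mapsto e^{-t}$, now yielding exponent $(\alpha_1+\alpha_2)\|u\|_X$, which is integrable since $\alpha_1 + \alpha_2 \le \alpha_1 + 2\alpha_2 \le \kappa$. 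This gives $|Z(y)-Z(y')| \lesssim \|y-y'\|_Y$, and hence $I_2 \lesssim \|y-y'\|_Y^2$.

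The main technical obstacle deserving care is securing a positive lower bound on $Z(y)$ and $Z(y')$ that is \emph{uniform} over the ball $\max\{\|y\|_Y,\|y'\|_Y\} < r$; without it the negative powers of $Z$ in $I_2$, and the factor $Z(y)^{-1}$ in $I_1$, cannot be controlled. I would obtain this exactly as in the proof of Theorem \ref{existence-uniqueness}: Assumption \ref{assumption-on-likelihood}(ii) yields $Z(y) \ge e^{-K(r)}\mu_0(\{\|u\|_X < R\})$, a bound independent of $y$ within the $r$-ball, while the finite upper bound $Z(y) < \infty$ follows as before from Assumptions \ref{assumption-on-likelihood}(i) and \ref{assumption-on-prior}. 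Collecting the estimates for $I_1$ and $I_2$ then gives $d_H(\mu^y,\mu^{y'})^2 \lesssim \|y-y'\|_Y^2$ with a constant $C(r)$ depending only on $r$, which is the claimed bound.
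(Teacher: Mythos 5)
Your proposal is correct and follows essentially the same route as the paper's own proof: the same cross-term insertion with $(a+b)^2 \le 2(a^2+b^2)$ to split into $I_1$ and $I_2$, the same mean-value-theorem estimates combining Assumptions \ref{assumption-on-likelihood}(i) and (iv) with the exponential integrability of Assumption \ref{assumption-on-prior}, and the same bound $|Z(y)-Z(y')| \lesssim \|y-y'\|_Y$ for the normalization constants. Your explicit remark that the lower bound on $Z(y)$ from Assumption \ref{assumption-on-likelihood}(ii) is uniform over the ball $\max\{\|y\|_Y,\|y'\|_Y\}<r$ is a point the paper leaves implicit (deferring to the proof of Theorem \ref{existence-uniqueness}), but it is the same argument.
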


\myhl{The result of this theorem can be viewed as a guideline for choosing prior measures in practice. Given a 
likelihood potential $\Phi$ we need to choose a prior measure $\mu_0$ that has sufficiently heavy tails so that 
$\kappa \ge \alpha_1 + 2\alpha_2$ in order to achieve stability. Furthermore, if we already have a prior measure $\mu_0$
that satisfies Assumption \ref{assumption-on-prior} but with a constant $\kappa < \alpha_1 + 2\alpha_2$ then 
we can simply dilate this measure to construct a new measure $\bar{\mu_0} = \mu_0 \circ c^{-1}$ where $c \in (0, \kappa/
(\alpha_1 + 2 \alpha_2))$. This new measure $\bar{\mu}_0$ will satisfy the conditions of Theorem~\ref{robustness} 
and results in a well-posed inverse problem.}
 
A version of Theorem~\ref{robustness} along with Theorem \ref{stability} below is available for Gaussian priors in \cite[Thm.~4.2]{stuart-acta-numerica} and for 
Besov priors in \cite[Thm.~3.3]{dashti-besov}.  
\myhll{Similar results on stability under
perturbation of data and consistent approximation can also be found in the lecture notes \cite[Sec.~4]{stuart-bayesian-lecture-notes}, where the results are established for separable Banach spaces and with slightly different assumptions on the prior
$\mu_0$. Here we present an analog of those proofs under Assumption 2 (exponential integrability), and do not require X to be separable.}

\begin{proof}
Consider the normalizing constants $Z(y)$ and $Z(y')$ associated with $y,y'\in Y$ via \eqref{bayes-rule}. We have already
established in the proof of Theorem \ref{existence-uniqueness} that neither of these 
constants will vanish. Now
applying the mean value theorem to the exponential function and using
Assumptions 1(i), (iv) and
Assumption \ref{assumption-on-prior} with $\kappa > \alpha_1 + 2
\alpha_2$ we obtain
$$
\begin{aligned}
| Z(y) - Z(y') | &\le  \int_X \exp( -\Phi(u;y) ) | \Phi(u;y) - \Phi(u;y') | \dd \mu_0(u)  \\
&\le \left( \int_X \exp( \alpha_1 \| u \|_X - M) \exp( \alpha_2 \| u \|_X + C) \dd \mu_0(u) \right) \| y - y' \|_Y  \\
 &\lesssim \| y -y' \|_Y.
\end{aligned}
$$
On the other hand, following the definition of the Hellinger metric
and using the inequality $(a + b)^2 \le 2( a^2 + b^2)$, we can write
$$
\begin{aligned}
  2d_H^2(\mu^y, \mu^{y'}) &=  \int_X \left( Z(y)^{-1/2} \exp \left( - \frac{1}{2} \Phi(u;y) \right) 
 - Z(y')^{-1/2} \exp \left(- \frac{1}{2} \Phi(u;y')\right) \right)^2 \dd\mu_0(u) \\
& \le \frac{2}{Z(y)} \int_X \left( \exp \left(-\frac{1}{2} \Phi(u;y)\right) - \exp \left(-\frac{1}{2} 
\Phi(u;y') \right) \right)^2 \dd\mu_0(u)  \\
& \quad + 2 \left| Z(y)^{-1/2} - Z(y')^{-1/2} \right|^2 \int_X \exp(- \Phi(u;y') ) \dd\mu_0(u).  \\
 & \quad =: I_1 + I_2.
\end{aligned}
$$
Once again by the mean value theorem and Assumptions 1(iv), (i) and
Assumption \ref{assumption-on-prior} with $\kappa > \alpha_1 +
2\alpha_2$ we have
$$
\begin{aligned}
 \frac{Z(y)}{2} I_1
& \le \int_X \frac{1}{4} \exp(- \Phi(u;y)) |\Phi(u;y') - \Phi(u;y)|^2 \dd\mu_0(u) \\
& \le \int_X \frac{1}{4} \exp(- \Phi(u;y)) \exp( 2\alpha_2 \|u \|_X+2 C) \|y -y'\|_Y^2 \dd\mu_0(u)\\
& \le \int_X \frac{1}{4} \exp(\alpha_1 \| u\|_X - M) \exp( 2\alpha_2 \|u \|_X+2 C)\|y - y'\|_Y^2 \dd\mu_0(u) \\
& \lesssim  \| y - y'\|_Y^2.
\end{aligned}
$$
Furthermore, 
$$
I_2 = 2 |Z(y)^{-1/2} - Z(y')^{-1/2}|^2 Z(y') \lesssim |Z(y) - Z(y')|^2 \lesssim \| y - y'\|^2_Y.
$$
This yields the desired result.
\end{proof}

We now turn our attention to consistent approximation of the inverse problem as per Definition \ref{def-stability}. 
\begin{theorem}\label{stability} Let $X$ and $Y$ be Banach spaces, and
  $\mu_0$ be a prior probability measure on $X$ satisfying Assumption
  \ref{assumption-on-prior} with a constant $\kappa > 0$. 
  Assume that the measures $\mu^y$ and $\mu^y_N$ are defined via \eqref{bayes-rule} and \eqref{discretized-bayes-rule}, for a fixed $y \in Y$, and are absolutely continuous 
with respect to the prior $\mu_0$. Also 
assume that both likelihood potentials $\Phi$ and $\Phi^N$ satisfy
Assumptions 1(i) and (ii) with a constant $\alpha_1 \ge 0$, uniformly for all $N$
and that for an $\alpha_3 \ge 0$ there exists a constant $C(\alpha_3) \in \reals$ so that 
\begin{equation} \label{discretization-assumption}
| \Phi(u;y) - \Phi_N(u;y) | \le \exp(\alpha_3 \| u \|_X + C ) \psi(N)
\end{equation}
where $\psi(N) \to 0$ as $N \to \infty$. If $\kappa \ge \alpha_1 + 2\alpha_3$ then there exists a constant $D$ independent of $N$ so that 
$$
d_H( \mu^y , \mu^y_N) \le D \psi(N).
$$\end{theorem}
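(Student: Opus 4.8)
The plan is to follow the template of the proof of Theorem~\ref{robustness} almost verbatim, with the perturbation of the data $y\mapsto y'$ replaced by the perturbation of the potential $\Phi\mapsto\Phi_N$, and with the role of the data-continuity estimate (Assumption~\ref{assumption-on-likelihood}(iv)) taken over by the discretization bound \eqref{discretization-assumption}. First I would record that both normalizing constants $Z(y)$ and $Z_N(y)$ are strictly positive and finite, \emph{uniformly in $N$}. Finiteness follows exactly as in Theorem~\ref{existence-uniqueness}: Assumption~\ref{assumption-on-likelihood}(i), with the common constant $\alpha_1$, gives $Z_N(y)\le \exp(-M)\int_X \exp(\alpha_1\|u\|_X)\,\dd\mu_0(u)$, which is finite by Assumption~\ref{assumption-on-prior} since $\alpha_1\le\kappa$. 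For the lower bound, Assumption~\ref{assumption-on-likelihood}(ii) holds uniformly in $N$, so for $R$ large enough $Z_N(y)\ge \exp(-K)\mu_0(\{\|u\|_X<R\})>0$ with a bound independent of $N$. This uniformity is what ultimately makes the final constant $D$ independent of $N$.

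Next I would bound the gap between the normalizing constants. Applying the mean value theorem to $t\mapsto\exp(-t)$ and using that both $\Phi$ and $\Phi_N$ obey the lower bound of Assumption~\ref{assumption-on-likelihood}(i), the intermediate point is itself bounded below by $M-\alpha_1\|u\|_X$, so that
$$
|Z(y)-Z_N(y)| \le \int_X \exp(\alpha_1\|u\|_X - M)\,|\Phi(u;y)-\Phi_N(u;y)|\,\dd\mu_0(u).
$$
Inserting the discretization estimate \eqref{discretization-assumption} collapses the integrand to $\exp((\alpha_1+\alpha_3)\|u\|_X)$ times constants and $\psi(N)$; since $\kappa\ge\alpha_1+2\alpha_3\ge\alpha_1+\alpha_3$, Assumption~\ref{assumption-on-prior} makes the integral finite and yields $|Z(y)-Z_N(y)|\lesssim\psi(N)$.

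For the Hellinger distance I would expand $2d_H^2(\mu^y,\mu^y_N)$ exactly as in Theorem~\ref{robustness}, add and subtract $Z(y)^{-1/2}\exp(-\tfrac12\Phi_N)$, and split via $(a+b)^2\le 2(a^2+b^2)$ into two terms $I_1+I_2$. For $I_1$ the mean value theorem applied to $t\mapsto\exp(-\tfrac12 t)$, together with the common lower bound and \eqref{discretization-assumption}, produces the integrand $\exp((\alpha_1+2\alpha_3)\|u\|_X)\psi(N)^2$; this is precisely where the full strength of the hypothesis $\kappa\ge\alpha_1+2\alpha_3$ is consumed, guaranteeing $I_1\lesssim\psi(N)^2$ (the fixed factor $Z(y)^{-1}$ being harmless). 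For $I_2=2|Z(y)^{-1/2}-Z_N(y)^{-1/2}|^2 Z_N(y)$ I would use that $Z(y)$ and $Z_N(y)$ are bounded away from $0$ uniformly in $N$, so that $|Z(y)^{-1/2}-Z_N(y)^{-1/2}|\lesssim|Z(y)-Z_N(y)|$, and then invoke the previous estimate to get $I_2\lesssim\psi(N)^2$. Combining the two bounds gives $d_H(\mu^y,\mu^y_N)\le D\psi(N)$.

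The only genuinely new point relative to Theorem~\ref{robustness}, and hence the part requiring care, is the uniformity in $N$. Every constant appearing in the estimates (the lower bound $M$ and upper bound $K$ from Assumption~\ref{assumption-on-likelihood}, the constant $C$ in \eqref{discretization-assumption}, and the resulting two-sided bounds on $Z_N(y)$) must be independent of $N$; this is exactly what the hypotheses guarantee, namely that Assumptions~\ref{assumption-on-likelihood}(i),(ii) hold uniformly in $N$ and that \eqref{discretization-assumption} holds with an $N$-independent constant $C(\alpha_3)$. Once this uniformity is secured, the argument is otherwise a routine repetition of the stability proof.
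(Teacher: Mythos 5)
Your proposal is correct and follows essentially the same route as the paper's proof, which likewise bounds $|Z(y)-Z_N(y)|$ via the mean value theorem and then splits $2d_H^2(\mu^y,\mu^y_N)$ into the same two terms $I_1+I_2$, deferring their estimation to the argument of Theorem~\ref{robustness} with \eqref{discretization-assumption} playing the role of Assumption~\ref{assumption-on-likelihood}(iv). Your explicit attention to the $N$-uniformity of the constants and to bounding the intermediate point in the mean value theorem by the common lower bound simply spells out details the paper leaves implicit.
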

\begin{proof}
  The proof of this theorem is very similar to that of Theorem \ref{robustness}. First note that 
$$
\begin{aligned}
| Z(y) - Z_N(y) | &\le  \int_X \exp( -\Phi(u;y) ) | \Phi(u;y) - \Phi_N(u;y) | \dd \mu_0(u)  \\
&\le \left( \int_X \exp( \alpha_1 \| u \|_X - M) \exp( \alpha_3\| u \|_X + C) \dd \mu_0(u) \right) \psi(N)\\
 &\lesssim \psi(N)
\end{aligned}
$$
which follows from applying the mean value theorem followed by
Assumption \ref{assumption-on-likelihood}(i) and
\eqref{discretization-assumption} above as well as Assumption
\ref{assumption-on-prior} with $\kappa \ge \alpha_1 + 2 \alpha_3$. Furthermore, we have
$$
\begin{aligned}
  2d_H^2(\mu^y, \mu^{y}_N) &=  \int_X \left( Z(y)^{-1/2} \exp \left( - \frac{1}{2} \Phi(u;y) \right) 
 - Z_N(y)^{-1/2} \exp \left(- \frac{1}{2} \Phi_N(u;y)\right) \right)^2 \dd\mu_0(u) \\
& \le \frac{2}{Z(y)} \int_X \left( \exp \left(-\frac{1}{2} \Phi(u;y)\right) - \exp \left(-\frac{1}{2} 
\Phi_N(u;y) \right) \right)^2 \dd\mu_0(u)  \\
& \quad + 2 \left| Z(y)^{-1/2} - Z_N(y)^{-1/2} \right|^2 \int_X \exp(- \Phi_N(u;y) ) \dd\mu_0(u).  \\
 & \quad =: I_1 + I_2.
\end{aligned}
$$
It then follows in a similar manner to proof of Theorem \ref{robustness}
 that $I_1 \lesssim \psi(N)$ 
and $I_2 \lesssim \psi(N)$ if $\kappa > \alpha_1 + 2 \alpha_3$
 which gives the desired result.
\end{proof}

In order to provide more details about the
rate of convergence of $\mu^y_N$ to $\mu^y$, we need to  impose further
assumptions on $\Phi$ such as stronger continuity assumptions or the
behavior of the function $\psi(N)$.
We will return to this question in
Section \ref{sec:examples} and provide finer results in the case where $\Phi_N$ is
obtained from  projections of $u$ onto finite dimensional
subspaces of $X$. \myhl{We emphasize that the result of Theorem \ref{stability} holds for a generic approximation $\Phi_N$ of the likelihood potential
$\Phi$ and is not restricted to cases where $\Phi_N$ is obtained via discretization. For example \cite{stuart-GP}
studies consistent approximations of the posterior measure by Gaussian process emulators.} 

\section{Convex measures}\label{sec:convex-measures}
Up to this point, we have shown that Assumptions \ref{assumption-on-likelihood} and \ref{assumption-on-prior} are sufficient for establishing the well-posedness of
Bayesian inverse problems for a broad class of priors. Assumptions \ref{assumption-on-likelihood} 
are properties of the model for the measurements  and  do not depend on the prior. Since the focus of this article is on the prior measure 
we dedicate this section to showing that Assumption \ref{assumption-on-prior} holds for a
large class of priors that go beyond the
 Gaussians \cite{stuart-acta-numerica} and the Besov priors
\cite{dashti-besov}. We start by collecting some results on
the class of convex probability measures on Banach spaces. Our main
reference 
is \cite{borell-convex} where the theory of convex probability
measures on topological vector spaces is developed. In what follows
$\mcl{B}(X)$ denotes the Borel $\sigma$-algebra on a Banach space $X$.

\begin{definition}\label{def:convex-measure}
  Let $\mu$ be a Radon measure on a Banach space $X$. We say that $\mu$ belongs to the class
of convex measures on $X$ if for all $0\le \lambda\le 1$ and sets $A,B \in \mcl{B}(X)$ we have
  \begin{equation}
\label{convex-inequality}
    \mu (\lambda A + (1-\lambda)B) \ge \mu(A)^\lambda \mu(B)^{1-\lambda}.
  \end{equation}
\end{definition}
Equivalently, convex measures on Banach spaces can be identified by
their finite dimensional projections.
\begin{theorem}[{\cite[Thm.~2.1]{borell-convex}}] \label{convex-finite-dim-projection}
  A Radon probability measure $\mu$ on a Banach space $X$ is convex precisely when 
$\mu_{\ell_1, \cdots, \ell_n}$ is a convex measure on $\reals^n$ for
all integers $n$ and elements $\ell_i \in X'$ the  dual of $X$ where 
$$
\mu_{\ell_1,\cdots,\ell_n}(A)  := \mu( \{ B \in \mathcal{B}(X): (\ell_1(B), \cdots, \ell_n(B)) \in A \} ) \: \text{ for all sets } A \in \mcl{B}(\reals^n).
$$
\end{theorem}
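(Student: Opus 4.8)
The plan is to prove both implications, the forward one (convexity of $\mu$ forces convexity of every projection) being routine and the converse being the substantive part. First I would treat the easy direction. Fix $\ell_1,\dots,\ell_n \in X'$ and let $\pi := (\ell_1,\dots,\ell_n): X \to \reals^n$, so that $\mu_{\ell_1,\dots,\ell_n} = \mu \circ \pi^{-1}$. The single observation driving everything is that a \emph{linear} map commutes with Minkowski combinations at the level of preimages: for Borel $A,B \subseteq \reals^n$ one has $\lambda \pi^{-1}(A) + (1-\lambda)\pi^{-1}(B) \subseteq \pi^{-1}(\lambda A + (1-\lambda) B)$, since $\pi(\lambda a + (1-\lambda)b) = \lambda\pi(a)+(1-\lambda)\pi(b)$. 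Applying the convexity inequality \eqref{convex-inequality} for $\mu$ to the sets $\pi^{-1}(A),\pi^{-1}(B)$ and using monotonicity then yields $\mu_{\ell_1,\dots,\ell_n}(\lambda A + (1-\lambda)B) \ge \mu_{\ell_1,\dots,\ell_n}(A)^\lambda \mu_{\ell_1,\dots,\ell_n}(B)^{1-\lambda}$, which is the claim. (The Minkowski sum of two Borel sets need only be analytic, but since $\mu$ is a complete Radon measure such sets are $\mu$-measurable, so every quantity above is well defined.)

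For the converse I would argue in two stages. Stage one reduces the general inequality to compact sets: given $A,B \in \mcl{B}(X)$ and $\epsilon > 0$, inner regularity of the Radon measure $\mu$ supplies compact $K \subseteq A$, $L \subseteq B$ with $\mu(K) \ge \mu(A)-\epsilon$ and $\mu(L) \ge \mu(B)-\epsilon$. Since $\lambda K + (1-\lambda) L$ is compact (hence Borel) and contained in $\lambda A + (1-\lambda) B$, it suffices to prove $\mu(\lambda K + (1-\lambda)L) \ge \mu(K)^\lambda \mu(L)^{1-\lambda}$ for compact $K,L$, and then let $\epsilon \downarrow 0$ using continuity of $t \mapsto t^\lambda$.

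Stage two proves the compact-set inequality by finite-dimensional approximation, and this is where I expect the real work. Set $M := \lambda K + (1-\lambda) L$. The set $K \cup L \cup M$ is compact, hence separable, so a Lindel\"of covering argument lets me choose a \emph{countable} family $\{\ell_j\}_{j \ge 1} \subseteq X'$ separating its points (this is exactly where one avoids assuming $X$ itself separable). Writing $\pi_n := (\ell_1,\dots,\ell_n)$, linearity gives the exact identity $\pi_n(M) = \lambda \pi_n(K) + (1-\lambda)\pi_n(L)$, so convexity of the projection $\mu_{\ell_1,\dots,\ell_n}$ applied to the compact sets $\pi_n(K),\pi_n(L) \subseteq \reals^n$ yields
$$ \mu\bigl(\pi_n^{-1}(\pi_n(M))\bigr) \;\ge\; \mu\bigl(\pi_n^{-1}(\pi_n(K))\bigr)^\lambda \, \mu\bigl(\pi_n^{-1}(\pi_n(L))\bigr)^{1-\lambda} \;\ge\; \mu(K)^\lambda \mu(L)^{1-\lambda}, $$
the last step because $\pi_n^{-1}(\pi_n(K)) \supseteq K$. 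The cylinders $C_n := \pi_n^{-1}(\pi_n(M))$ decrease with $n$, and a compactness-plus-separation argument shows $\bigcap_n C_n = M$: any $x$ lying in every $C_n$ agrees with some $m_n \in M$ on the first $n$ functionals, a convergent subsequence $m_{n_k} \to m \in M$ then agrees with $x$ on every $\ell_j$, and separation forces $x = m \in M$. Continuity of the finite measure $\mu$ from above gives $\mu(M) = \lim_n \mu(C_n) \ge \mu(K)^\lambda \mu(L)^{1-\lambda}$, completing stage two and hence the theorem.

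The crux, and the step I expect to be most delicate, is stage two: the twin facts that linear projections commute \emph{exactly} with Minkowski combinations (so the finite-dimensional inequalities line up with the cylinders over $M$, not over its convex hull) and that these cylinders collapse precisely to $M$ rather than to $\overline{\operatorname{conv}}\, M$. The latter hinges essentially on $M$ being compact; without the reduction to compact sets in stage one, outer cylindrical approximations would overshoot to the closed convex hull and the argument would break down.
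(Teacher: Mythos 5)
Your forward direction and the reduction to compact sets are fine (modulo measurability of Minkowski sums, a caveat already built into Definition \ref{def:convex-measure}), but stage two has a genuine gap at the step ``$\bigcap_n C_n = M$''. You chose the countable family $\{\ell_j\}$ to separate the points \emph{of the compact set} $K \cup L \cup M$, yet the point $x \in \bigcap_n C_n$ that you must identify with some $m \in M$ is an arbitrary point of $X$, not a point of that compact set. Your limiting argument only shows $\ell_j(x) = \ell_j(m)$ for all $j$, i.e.
$$
\bigcap_{n} C_n \;=\; M + N, \qquad N := \bigcap_{j \ge 1} \ker \ell_j,
$$
and separation within $K \cup L \cup M$ gives no control on $N$. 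In a non-separable space this cannot be repaired by enlarging the family: a space such as $\ell^2(\Gamma)$ with $\Gamma$ uncountable admits no countable total family of functionals, so $N \neq \{0\}$ no matter how the $\ell_j$ are chosen. The failure is not cosmetic: take $K = L = M = \{0\}$ and $\mu$ any Radon probability measure concentrated on a line $\reals v$ with $v \in N$; then every $C_n \supseteq \reals v$, so $\lim_n \mu(C_n) = 1$ while $\mu(M) = 0$, and your claimed identity $\mu(M) = \lim_n \mu(C_n)$ is false. (The theorem is not violated here --- the inequality $0 \ge 0$ holds --- but the proof step breaks.)

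The gap is fixable, and the missing ingredient appears elsewhere in the paper: a Radon measure on a Banach space is concentrated on a separable (even reflexive) subspace $E$, cf.\ Theorem \ref{concentration-of-measure}. Choose $\{\ell_j\}$ to be total on the separable closed subspace $Z := \overline{\mathrm{span}}(E \cup K \cup L)$, which contains $M$ (possible by Hahn--Banach applied to a dense sequence in $Z$). Then for $x \in (\bigcap_n C_n) \cap E$ your compactness argument gives $x - m \in Z$ with $\ell_j(x - m) = 0$ for all $j$, hence $x = m \in M$; since $\mu(X \setminus E) = 0$ this yields $\lim_n \mu(C_n) = \mu\bigl((\bigcap_n C_n) \cap E\bigr) \le \mu(M)$, which closes stage two. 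Alternatively, note that the paper itself does not prove this statement --- it cites \cite[Thm.~2.1]{borell-convex} --- and Borell's own argument sidesteps countability entirely: one works with the \emph{directed} family of all finite subsets $F \subset X'$, for which $\bigcap_F \pi_F^{-1}(\pi_F(M)) = M$ holds exactly because the full dual separates the points of $X$, and then passes to the limit along this uncountable directed system using $\tau$-additivity of Radon measures. Your countable-subsequence shortcut is precisely where your route diverges from that argument, and it is the one step that needs the concentration result to survive.
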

In the case of $\reals^n$, convex measures are easily identified by their 
Lebesgue density.
\begin{theorem}({\cite[Thm.~1.1]{borell-convex}})\label{convex-measure-density}
  A Radon probability measure $\mu$ on $\reals^n$ is convex precisely when there exists 
an integer $0 < k \le n$, a Radon probability measure $\nu$ on $\reals^k$ with 
Lebesgue density $\pi_\nu$ and an affine mapping $h$ such that $\mu = \nu \circ h^{-1}$ and 
$- \log(\pi_\nu)$ is convex.
\end{theorem}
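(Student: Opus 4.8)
I would prove the two implications separately, treating the elementary \emph{if} direction first and reserving the bulk of the effort for the converse.

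For the \emph{if} direction I start from $\mu = \nu \circ h^{-1}$ with $\nu$ carrying a log-concave Lebesgue density $\pi_\nu$ on $\reals^k$ and $h:\reals^k\to\reals^n$ affine injective. The first observation is that the defining inequality \eqref{convex-inequality} is covariant under affine maps: since $h(\lambda s + (1-\lambda) t) = \lambda h(s) + (1-\lambda) h(t)$ one gets $h^{-1}(\lambda A + (1-\lambda) B) \supseteq \lambda\, h^{-1}(A) + (1-\lambda)\, h^{-1}(B)$, so monotonicity of $\nu$ together with convexity of $\nu$ transfers \eqref{convex-inequality} verbatim to $\mu$. This reduces the claim to showing that a measure on $\reals^k$ with log-concave density is convex, which is precisely the measure-theoretic form of the Pr\'ekopa--Leindler inequality. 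Applying it with $f = \pi_\nu \mb{1}_A$, $g = \pi_\nu \mb{1}_B$ and $w = \pi_\nu \mb{1}_{\lambda A + (1-\lambda)B}$, log-concavity of $\pi_\nu$ supplies the pointwise hypothesis $w(\lambda s + (1-\lambda) t) \ge f(s)^\lambda g(t)^{1-\lambda}$, and the inequality yields $\nu(\lambda A + (1-\lambda)B) = \int w \ge (\int f)^\lambda (\int g)^{1-\lambda} = \nu(A)^\lambda \nu(B)^{1-\lambda}$.

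For the \emph{only if} direction I begin with a convex measure $\mu$ on $\reals^n$ and first show its support is convex: if $a,b\in\operatorname{supp}\mu$, then applying \eqref{convex-inequality} to small balls $B(a,r)$ and $B(b,r)$ forces $\mu(B(\lambda a + (1-\lambda)b, r)) > 0$ for every $r>0$, so $\lambda a + (1-\lambda) b \in \operatorname{supp}\mu$. Letting $k$ be the dimension of the affine hull of $\operatorname{supp}\mu$ and $h$ an affine bijection of $\reals^k$ onto that hull, the pullback is, by affine covariance again, a convex measure with full-dimensional support on $\reals^k$; it suffices to treat this reduced measure, which I continue to denote $\mu$. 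The main obstacle is to prove that such a full-dimensional convex measure is absolutely continuous with respect to $k$-dimensional Lebesgue measure. The tool I would exploit is that for a fixed convex body $C\ni 0$ the joint map $G(x,\epsilon) := \mu(x + \epsilon C)$ is log-concave on $\reals^k\times(0,\infty)$, because $\lambda(x_1+\epsilon_1 C)+(1-\lambda)(x_2+\epsilon_2 C) = (\lambda x_1+(1-\lambda)x_2) + (\lambda\epsilon_1+(1-\lambda)\epsilon_2)C$ for convex $C$. Continuity of $G$ on the interior of its support already rules out atoms of $\mu$ in the interior of $\operatorname{supp}\mu$ (an atom would produce a jump of $G(\cdot,\epsilon)$ at an interior point as $C$ enters or leaves it), and I would upgrade atomlessness to full absolute continuity by reducing to the one-dimensional case --- where a non-degenerate convex measure is readily shown to be absolutely continuous --- and disintegrating along a coordinate direction, using that both the marginal (convex by Theorem \ref{convex-finite-dim-projection}) and the conditionals inherit convexity. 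I expect the delicate points to be controlling any singular part transverse to a fixed direction and handling the behaviour at the boundary of the support.

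With $\mu = \pi\,\dd x$ on $\reals^k$ in hand, the final step upgrades this to log-concavity of $\pi$ by differentiation. For Lebesgue points $x,y$ of $\pi$ and $z = \lambda x + (1-\lambda) y$, dividing $\mu(z+\epsilon C) \ge \mu(x+\epsilon C)^\lambda \mu(y+\epsilon C)^{1-\lambda}$ by $\epsilon^k|C| = (\epsilon^k|C|)^\lambda(\epsilon^k|C|)^{1-\lambda}$ and letting $\epsilon\to 0$, the Lebesgue (Besicovitch) differentiation theorem converts each ratio $\mu(\cdot+\epsilon C)/(\epsilon^k|C|)$ into the corresponding density value, giving $\pi(\lambda x + (1-\lambda)y) \ge \pi(x)^\lambda \pi(y)^{1-\lambda}$ for almost every $x,y$. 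A standard argument then selects a genuinely log-concave representative $\pi_\nu$ of $\pi$; setting $\nu = \pi_\nu\,\dd x$ and undoing the affine reduction produces $\mu = \nu\circ h^{-1}$ with $-\log\pi_\nu$ convex, as required.
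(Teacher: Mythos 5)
The paper itself offers no proof of this statement: it is imported verbatim from Borell \cite{borell-convex}, so your attempt stands or falls on its own merits. Your \emph{if} direction is essentially complete and is the standard argument: affine covariance of \eqref{convex-inequality} under preimages (injectivity of $h$ is not even needed), followed by Pr\'ekopa--Leindler applied to $f=\pi_\nu\mb{1}_A$, $g=\pi_\nu\mb{1}_B$, $w=\pi_\nu\mb{1}_{\lambda A+(1-\lambda)B}$. (One standard caveat: Minkowski combinations of Borel sets need not be Borel, so one should state the inequality for compact $A,B$ first and invoke inner regularity of the Radon measure; the same caveat is implicit in Definition \ref{def:convex-measure}.) Your closing step --- differentiation at Lebesgue points to obtain the log-concavity inequality almost everywhere, then passage to a log-concave representative --- is also sound, provided you add that for fixed $\lambda$ and almost every pair $(x,y)$ the point $z=\lambda x+(1-\lambda)y$ is itself a Lebesgue point, so that the left-hand ratio converges as well.

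The genuine gap is exactly where you flag it: absolute continuity of a convex measure with full-dimensional support, and specifically your assertion that ``the conditionals inherit convexity.'' That assertion cannot be invoked; for log-concave measures it is true, but the standard proofs of it pass through the density representation --- i.e., through the very theorem you are proving --- or through Pr\'ekopa's theorem on marginals of log-concave \emph{functions}, which again presupposes a density. To extract convexity of conditionals from the set-function inequality alone you would have to run a differentiation argument on product sets, namely divide $\mu\bigl(B(x,\epsilon)\times(\lambda A+(1-\lambda)B)\bigr)\ge\mu\bigl(B(x,\epsilon)\times A\bigr)^\lambda\mu\bigl(B(x,\epsilon)\times B\bigr)^{1-\lambda}$ by the marginal measure of $B(x,\epsilon)$ and let $\epsilon\to0$; the exceptional null set then depends on the pair $(A,B)$, and making the conclusion hold simultaneously for all Borel pairs requires a countable class of sets stable under Minkowski combination plus a regularity and limiting argument. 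That is a substantial component of the proof, not a remark. Note also that what your argument genuinely establishes --- convex support, no atoms in the interior (via continuity of the log-concave function $G(\cdot,\epsilon)$), and absolute continuity of the marginal (via Theorem \ref{convex-finite-dim-projection} and induction) --- is demonstrably insufficient: the measure $\tfrac12(\text{Lebesgue on the unit square})+\tfrac12(\text{arclength on a diagonal segment})$ has full-dimensional convex support, no atoms, and absolutely continuous one-dimensional marginals, yet carries a singular part; only the unproven convexity of its conditionals (which have atoms) would expose it as non-convex. Borell's own proof fills precisely this hole with a dedicated induction showing that a convex measure is either absolutely continuous or concentrated on a proper affine subspace; without that step, or a worked-out version of your differentiation scheme, the hard direction remains open.
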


The above theorem allows us to easily identify or construct convex
measures in finite dimensions.
We summarize a list
of common convex measures on $\reals$ in Table
\ref{tab:convex-distributions-in-1D}. \myhl{Note that some of these
  measures such as the exponential, Gamma and uniform distribution
are supported on subsets of $\reals$. This fact does not affect 
the results of this section}.  The proof of convexity for these
distributions follows
from showing that their Lebesgue densities are log-concave (see \cite{bagnoli-log-concave}
for details).
  \begin{table}[htp]
    \centering
    \begin{tabular}{| l | c | c | c |}
      \hline 
      Distribution & Symbol & Lebesgue density $\pi(x)$ & Parameter
                                                          Range \\
      \hline
      Gaussian & $\mcl{N}(m, \sigma^2)$ & $\frac{1}{\sqrt{2\pi
                                         \sigma^2}} \exp\left(
                                         -\frac{(x-m)^2}{2\sigma^2}\right)$
                                                        & $m \in
                                                          \reals$ and
                                                          $\sigma >0$
      \\ \hline
Exponential & $\text{Exp}(\lambda)$ & $\mb{1}_{[0, \infty)}(x)\lambda \exp( - \lambda x)$&
                                                                    $\lambda
                                                                    >
                                                                    0$
      \\ \hline
Laplace & $\text{Lap}(m, \sigma)$ & $\frac{1}{2\sigma} \exp\left( -
                                    \frac{|x - m |}{\sigma} \right) $
                                                        & $m \in
                                                          \reals $ and
                                                          $\sigma > 0$ 
      \\ \hline
Logistic & $\text{Logistic}(m, s)$ &$\frac{\exp\left(-
                                     \frac{x-m}{s}\right)}{ s \left( 1
                                     +  \exp\left(-
                                     \frac{x-m}{s}\right) \right)^2} $
                                     & $m \in \reals$ and $s > 0$ \\
      \hline
Gamma & $\text{Gamma}(k, \lambda)$&  $\mb{1}_{[0, \infty)}(x)\frac{1}{\Gamma(k) \lambda^k}
                                    x^{k-1} \exp(-x/\lambda)$&
 $k \ge 1$ and $\lambda > 0$ \\ \hline
Uniform & $\mcl{U}(a,b)$ & $\mb{1}_{[a,b]}(x) $ & $ a, b \in \reals $ and
                                               $b > a$ \\ \hline
    \end{tabular}
\caption{List of common distributions on $\reals$ that are convex
  within the prescribed parameter range.} \label{tab:convex-distributions-in-1D}
  \end{table} 

The following corollary to Theorem~\ref{convex-finite-dim-projection}  is the cornerstone of our recipe for construction of convex 
prior measures in \myhl{Section \ref{construction-of-convex-priors}}.
\begin{corollary}\label{convexity-preservation}
  \begin{enumerate}[(i)]
\item The image of a convex measure under a continuous linear mapping is convex. 
\item Let $X$ be a Banach space and let $\{ \mu_n \}_{n=1}^\infty$ be a sequence of convex measures on $X$
converging weakly to a measure $\mu$. Then $\mu$ is also convex.
\item The product or convolution of two convex measures is also convex.

  \end{enumerate}
\end{corollary}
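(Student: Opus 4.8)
The plan is to prove parts (i) and (iii) by reducing to finite-dimensional projections via Theorem~\ref{convex-finite-dim-projection} and appealing to classical facts about log-concave measures on $\reals^n$, while part (ii) requires a direct argument in the weak topology. For part (i), let $T : X \to Z$ be continuous and linear and let $\mu$ be convex on $X$; its pushforward $\mu \circ T^{-1}$ is again a Radon probability measure, since $T$ maps compact sets to compact sets and thus preserves tightness. By Theorem~\ref{convex-finite-dim-projection} it suffices to check that every finite-dimensional projection $(\mu \circ T^{-1})_{\ell_1, \dots, \ell_n}$ with $\ell_i \in Z'$ is convex on $\reals^n$. A direct computation from the definition of the projected measure gives $(\mu \circ T^{-1})_{\ell_1, \dots, \ell_n} = \mu_{\ell_1 \circ T, \dots, \ell_n \circ T}$, and since each $\ell_i \circ T \in X'$, the right-hand side is a finite-dimensional projection of the convex measure $\mu$, hence convex by Theorem~\ref{convex-finite-dim-projection} applied to $\mu$ itself. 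This settles part (i).

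For part (iii) I first treat the product $\mu_1 \times \mu_2$ of convex measures on $X_1$ and $X_2$. Since $(X_1 \times X_2)' = X_1' \times X_2'$, every continuous functional has the form $(x_1, x_2) \mapsto \phi(x_1) + \psi(x_2)$, so a projection onto $\reals^n$ factors as $(x_1, x_2) \mapsto P(x_1) + Q(x_2)$ for continuous linear maps $P, Q$ into $\reals^n$. Consequently $(\mu_1 \times \mu_2)_{\ell_1, \dots, \ell_n}$ is the law of $P(x_1) + Q(x_2)$ under independent samples, i.e. the convolution of $\mu_1 \circ P^{-1}$ and $\mu_2 \circ Q^{-1}$ on $\reals^n$, both convex by part (i). Thus product-convexity reduces to the classical fact that the convolution of two convex (log-concave) measures on $\reals^n$ is convex, which follows from the Pr\'ekopa--Leindler inequality together with the density characterization of Theorem~\ref{convex-measure-density} (with care taken to allow lower-dimensional support in the limit); Theorem~\ref{convex-finite-dim-projection} then lifts the conclusion back to $X_1 \times X_2$. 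Convolution on a single space $X$ is then immediate: $\mu_1 * \mu_2$ is the pushforward of $\mu_1 \times \mu_2$ under the continuous linear addition map $S(x,y) = x + y$, so its convexity follows from the product case and part (i).

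The substance of the corollary, and the step I expect to be the main obstacle, is part (ii). Here I would verify the defining inequality \eqref{convex-inequality} for the weak limit $\mu$. By inner regularity of the Radon measure $\mu$ it is enough to establish $\mu(\lambda K + (1-\lambda)L) \ge \mu(K)^\lambda \mu(L)^{1-\lambda}$ for all compact $K, L$, since taking suprema over compact subsets upgrades this to arbitrary Borel sets. The difficulty is that weak convergence controls $\mu$ only from the correct side on the correct type of set: the portmanteau theorem gives $\mu(U) \le \liminf_n \mu_n(U)$ for open $U$ but only $\mu(F) \ge \limsup_n \mu_n(F)$ for closed $F$, whereas the inequality I want needs a lower bound on $\mu$ of a Minkowski combination together with lower bounds on the constituent sets under each $\mu_n$. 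To reconcile these directions I would pass to open $\delta$-neighborhoods $K_\delta \supseteq K$ and $L_\delta \supseteq L$, apply convexity of each $\mu_n$ to the open set $\lambda K_\delta + (1-\lambda)L_\delta$, and combine $\liminf_n \mu_n(K_\delta) \ge \mu(K_\delta) \ge \mu(K)$ (likewise for $L$) with the elementary estimate $\liminf_n a_n^\lambda b_n^{1-\lambda} \ge (\liminf_n a_n)^\lambda (\liminf_n b_n)^{1-\lambda}$; applying the closed-set portmanteau bound to the closure then yields $\mu\big(\overline{\lambda K_\delta + (1-\lambda)L_\delta}\big) \ge \mu(K)^\lambda \mu(L)^{1-\lambda}$.

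Finally, since $\lambda K + (1-\lambda)L$ is compact, the closed sets $\overline{\lambda K_\delta + (1-\lambda)L_\delta}$ are contained in the closed $\delta$-neighborhoods of $\lambda K + (1-\lambda)L$, which decrease to it as $\delta \downarrow 0$; continuity from above of the probability measure $\mu$ then removes the neighborhood and delivers the desired inequality. Managing this interplay of open and closed approximations so that both portmanteau inequalities point the right way is the only genuinely delicate point of the corollary; parts (i) and (iii) are routine once Theorem~\ref{convex-finite-dim-projection} and the classical finite-dimensional results are in hand.
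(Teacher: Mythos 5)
Your proposal is correct, and it takes a genuinely different route from the paper. The paper proves only part (iii) in-house; parts (i) and (ii) are quoted directly from Borell (Lem.~2.1 and Thm.~2.2 of that reference), so your projection argument for (i) and your portmanteau argument for (ii) have no counterpart in the text. For (iii) the two routes diverge sharply: the paper works directly in the infinite-dimensional product, slicing $\lambda A+(1-\lambda)B$ and applying Fubini, whereas you reduce to $\reals^n$ via Theorem~\ref{convex-finite-dim-projection}, note that functionals on $X_1\times X_2$ split so that every finite-dimensional projection of $\mu_1\otimes\mu_2$ is a convolution of two convex measures on $\reals^n$, and invoke the classical Pr\'ekopa--Leindler/Borell convolution theorem there. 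Your route costs an appeal to that classical finite-dimensional fact, but it buys something real: it uses the convexity of \emph{both} factors, which is unavoidable (a product with a non-convex factor is not convex, as a Dirac mass times a non-convex $\rho$ shows). The paper's slicing proof, by contrast, never uses convexity of the second factor and closes with the bound $\int_Z f^\lambda g^{1-\lambda}\,\dd\rho \ge \bigl(\int_Z f\,\dd\rho\bigr)^\lambda \bigl(\int_Z g\,\dd\rho\bigr)^{1-\lambda}$ attributed to H\"older --- but this is the \emph{reverse} of H\"older's inequality and is false in general (take $f,g$ with disjoint supports); the gap is repaired precisely by the functional (Pr\'ekopa--Leindler-type) inequality for the convex measure $\rho$ that your reduction makes explicit. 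Your part (ii) is also sound --- the open-neighborhood/closed-closure interplay, the supermultiplicativity of $\liminf$, and continuity from above all point the right way --- with only two tacit assumptions worth flagging: that the weak limit $\mu$ is Radon (needed anyway for Definition~\ref{def:convex-measure} to apply to it) and that Minkowski combinations of Borel sets are measurable (an issue already implicit in the paper's Definition~\ref{def:convex-measure}, and vacuous for your compact sets).
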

\begin{proof}
  Parts (i) and (ii) are proven in
  \cite[Lem.~2.1,Thm.~2.2]{borell-convex} and so we only provide the details for
  (iii). Suppose that  $\mu$ and \myhl{$\rho$} are convex measures on Banach
  spaces $X$ and $Z$ and let \myhl{$\mu \otimes \rho$} denote the product
  measure on $X \times Z$. Also, let $\mathcal{B}(X) \otimes
  \mathcal{B}(Z)$ denote the product Borel $\sigma$-algebra on $X$ and $Z$.
For every set $D \in \mcl{B}(X) \otimes \mcl{B}(Z)$ and given $z\in Z$, define the slice
$D_z := \{ x \in X | (x,z) \in
  D\}$. For $A,B \in \mcl{B}(X) \otimes \mcl{B}(Z)$ and a constant 
$0 \le\lambda  \le1$ we have 

$$
(\lambda A + (1- \lambda)B)_z \supseteq \lambda A_z + (1- \lambda) B_z.
$$

Then, by Fubini's theorem \cite[Theorem~3.4.1]{bogachev2} we have 
\myhl{$$
\begin{aligned}
\mu \otimes \rho (\lambda A + (1-\lambda) B) &= \int_Z \mu( (\lambda A +
(1- \lambda) B)_z) \rho( dz) \\ &\ge \int_Z \mu( \lambda A_z +
(1- \lambda) B_z) \rho( dz) \\ & \ge  
\int_Z \mu(A_z)^\lambda
\mu(B_z)^{(1-\lambda)} \rho(dz)  \ge
\left( \int_Z \mu(A_z) \rho(dz) \right) ^\lambda
\left( \int_Z \mu(B_z) \rho(dz) \right)^{(1-\lambda)}
\end{aligned}
$$}
where the last two inequalities follow from the convexity of $\mu$ and
H\"{o}lder's inequality. The assertion on the convexity of the
convolution follows by considering the convolution  \myhl{$\mu \ast
\rho$ as the image of $\mu \otimes \rho$ under the linear mapping $(x,z)
\mapsto x + z$.}
\end{proof}

We also recall the following result which identifies the condition for convexity of measures that are 
absolutely continuous with respect to another convex measure. 
\begin{theorem}[{\cite[Prop.~4.3.8]{bogachev-malliavin}}]  \label{absolutely-convex}
    Let $\mu$ be a convex probability measure on a Banach space $X$ and let $V$ denote a 
continuous, measurable and convex function such that $\exp(- V)$
is $\mu$-measurable. 
\myhl{If $0<Z = \int_X \exp(-V(u)) \dd\mu(u)<\infty$ 
then 
the measure $\nu$ defined via 
 $\frac{\dd\nu}{\dd \mu} = \frac{1}{Z} \exp(-V)$ is a convex probability measure.}
\end{theorem}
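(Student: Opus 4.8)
The plan is to verify the defining inequality \eqref{convex-inequality} for $\nu$ directly, exploiting two facts: that $e^{-V}$ is a \emph{log-concave function} (an immediate consequence of the convexity of $V$), and that a convex measure $\mu$ satisfies a functional Pr\'ekopa--Leindler inequality. First I would record the pointwise inequality coming from convexity of $V$: for any $x,y\in X$ and $\lambda\in[0,1]$,
\begin{equation*}
 e^{-V(\lambda x + (1-\lambda)y)} \ge \left(e^{-V(x)}\right)^\lambda \left(e^{-V(y)}\right)^{1-\lambda},
\end{equation*}
since $V(\lambda x + (1-\lambda)y)\le \lambda V(x)+(1-\lambda)V(y)$. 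Fix Borel sets $A,B$ and set $f=\mathbf{1}_A e^{-V}$, $g=\mathbf{1}_B e^{-V}$ and $h=\mathbf{1}_{\lambda A+(1-\lambda)B}\,e^{-V}$. For $x\in A$ and $y\in B$ the point $z=\lambda x+(1-\lambda)y$ lies in $\lambda A+(1-\lambda)B$, so the displayed inequality gives $h(z)\ge f(x)^\lambda g(y)^{1-\lambda}$, while if $x\notin A$ or $y\notin B$ the right-hand side vanishes. Thus the triple $(f,g,h)$ satisfies $h(\lambda x+(1-\lambda)y)\ge f(x)^\lambda g(y)^{1-\lambda}$ for all $x,y\in X$.

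Since $Z\nu(\lambda A+(1-\lambda)B)=\int_X h\,\dd\mu$ and $Z\nu(A)=\int_X f\,\dd\mu$, $Z\nu(B)=\int_X g\,\dd\mu$, the theorem follows once I establish the functional inequality $\int_X h\,\dd\mu\ge(\int_X f\,\dd\mu)^\lambda(\int_X g\,\dd\mu)^{1-\lambda}$ for $\mu$, and then divide by $Z$. Proving this functional (Pr\'ekopa--Leindler) inequality from the set inequality \eqref{convex-inequality} alone is the heart of the argument, and the step I expect to be the main obstacle. I would attack it through the layer-cake representation. For $s,t>0$ the superlevel sets satisfy $\lambda\{f>s\}+(1-\lambda)\{g>t\}\subseteq\{h>s^\lambda t^{1-\lambda}\}$, so by monotonicity and \eqref{convex-inequality},
\begin{equation*}
 \mu\!\left(\{h>s^\lambda t^{1-\lambda}\}\right)\ge \mu(\{f>s\})^\lambda\,\mu(\{g>t\})^{1-\lambda}.
\end{equation*}
Writing $\int_X f\,\dd\mu=\int_0^\infty \mu(\{f>s\})\,\dd s$ and likewise for $g$ and $h$, and substituting $s=e^a$, $t=e^b$ to convert the multiplicative threshold $s^\lambda t^{1-\lambda}$ into the affine combination $\lambda a+(1-\lambda)b$, the three distribution functions (each reweighted by the Jacobian of the substitution) satisfy exactly the hypothesis of the classical one-dimensional Pr\'ekopa--Leindler inequality on $\reals$ with Lebesgue measure. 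Invoking that scalar inequality yields $\int_X h\,\dd\mu\ge(\int_X f\,\dd\mu)^\lambda(\int_X g\,\dd\mu)^{1-\lambda}$, which is what is needed.

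Two technical points deserve care. First, measurability: the Minkowski combination $\lambda A+(1-\lambda)B$ need not be Borel, so, as in Borell's formulation, I would read \eqref{convex-inequality} and the superlevel-set inclusion in terms of inner measure; this is legitimate because $\mu$ is assumed to be a complete Radon measure. Second, the hypotheses $0<Z<\infty$, continuity, convexity, and $\mu$-measurability of $V$ guarantee that $f,g,h$ are $\mu$-integrable and that all the superlevel sets appearing above are measurable, so the layer-cake manipulations are justified.

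Finally, I would note that the tempting alternative route---reducing to $\reals^n$ via the projection characterization (Theorem~\ref{convex-finite-dim-projection}) together with the Lebesgue-density characterization (Theorem~\ref{convex-measure-density})---does not apply cleanly here, because the weight $e^{-V}$ does not in general factor through any finite collection of functionals $\ell_1,\dots,\ell_n$. Consequently the finite-dimensional image of $\nu$ is not of the product form that those theorems require, which is why I favor the functional-inequality argument, an approach that works intrinsically on the Banach space $X$.
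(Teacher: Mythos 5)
Your proof is correct, but note that there is no proof in the paper to compare against: Theorem~\ref{absolutely-convex} is imported wholesale from \cite[Prop.~4.3.8]{bogachev-malliavin}, the citation being the entire argument. Your derivation is therefore a genuinely self-contained alternative, and its core is sound: the superlevel-set inclusion, the set-level inequality \eqref{convex-inequality}, and the substitution $s=e^a$, $t=e^b$ do reduce the functional Pr\'ekopa--Leindler inequality for $\mu$ to the classical one-dimensional one, since the reweighted distribution functions $F(a)=e^a\mu(\{f>e^a\})$, $G(b)=e^b\mu(\{g>e^b\})$, $H(c)=e^c\mu(\{h>e^c\})$ satisfy $H(\lambda a+(1-\lambda)b)\ge F(a)^{\lambda}G(b)^{1-\lambda}$ and are measurable (monotone up to the Jacobian factor); dividing by $Z=Z^{\lambda}Z^{1-\lambda}$ then gives \eqref{convex-inequality} for $\nu$. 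Two refinements would tighten the write-up. First, instead of working with inner measures you can exploit the Radon property more directly: by inner regularity it suffices to prove the inequality for compact $A,B$, and then $\lambda A+(1-\lambda)B$ is compact (continuous image of $A\times B$), so every set in sight is Borel and $h$ is honestly measurable. Second, Definition~\ref{def:convex-measure} requires a convex measure to be Radon, so you should add the one-line observation that $\nu\ll\mu$ inherits the Radon property from $\mu$, exactly as the paper does in the proof of Theorem~\ref{existence-uniqueness} via \cite[Lem.~7.1.11]{bogachev2}. Your closing remark is also apt: the reduction through Theorem~\ref{convex-finite-dim-projection} is not available for free, because the density of the projected $\nu$ is a conditional expectation of $e^{-V}$, and log-concavity of such conditional expectations is itself a Pr\'ekopa-type theorem rather than an elementary fact.
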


The most attractive feature of convex measures in the context of inverse problems is 
that, as priors, they satisfy Assumption \ref{assumption-on-prior}. 
First we need a technical result concerning convex measures on Banach
spaces. 

\begin{theorem}[{\cite[Thm.~4.3.7]{bogachev-malliavin}}]
\label{bogachev-convex-measurable-exp}
\myhl{  Let $\mu$ be a convex measure on a Banach space $X$ and let 
$q$ be a $\mu$-measurable seminorm which is finite 
$\mu$-a.e. Then $\int_X\exp( c q(u)) \dd \mu(u) < \infty$ provided that 
$\mu( \{ u| q(u) \le c^{-1}\}) > \frac{e^2}{1 + e^2}$.}
\end{theorem}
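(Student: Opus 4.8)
The plan is to derive a Borell-type concentration inequality for the symmetric convex sublevel sets of $q$ and then integrate the resulting tail estimate against $\exp(cq)$. Since $q$ is a seminorm, each sublevel set $A_t := \{ u \in X : q(u) \le t\}$ is convex and symmetric, and it is $\mu$-measurable by hypothesis. Write $\theta := \mu(A_{1/c})$, so that the assumption $\mu(\{u : q(u) \le c^{-1}\}) > \frac{e^2}{1+e^2}$ reads $\theta > \frac{e^2}{1+e^2}$, which is equivalent to $\frac{1-\theta}{\theta} < e^{-2}$.

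The geometric core of the argument is the inclusion, valid for every symmetric convex set $A$ and every $t > 1$,
$$
X \setminus A \;\supseteq\; \frac{2}{t+1}\,(X \setminus tA) + \frac{t-1}{t+1}\,A.
$$
To see this, take $x \notin tA$ and $y \in A$ and suppose a point $z = \frac{2}{t+1} x + \frac{t-1}{t+1} y$ of the right-hand side lay in $A$; solving for $x/t$ writes it as the convex combination $\frac{t+1}{2t} z + \frac{t-1}{2t}(-y)$ of $z \in A$ and $-y \in A$ (using symmetry), forcing $x/t \in A$ and contradicting $x \notin tA$. Applying the defining convexity inequality \eqref{convex-inequality} with $\lambda = \frac{2}{t+1}$, $A \mapsto X \setminus tA$ and $B \mapsto A$ (taking $A = A_{1/c}$) gives $1 - \theta \ge \mu(X \setminus tA)^{2/(t+1)} \theta^{(t-1)/(t+1)}$, and solving for the tail yields
$$
\mu\big(\{ q > t c^{-1}\}\big) \;\le\; \theta \left( \frac{1-\theta}{\theta} \right)^{(t+1)/2}, \qquad t \ge 1.
$$

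To finish, set $\gamma := \big( (1-\theta)/\theta \big)^{1/2}$, so that $\gamma < e^{-1}$ by the hypothesis, and note that writing $s = t c^{-1}$ the bound above reads $\mu(\{q > s\}) \le \theta \gamma\, \gamma^{cs}$ for $s \ge c^{-1}$. Using the layer-cake identity $\int_X \exp(cq)\,\dd\mu = 1 + \int_0^\infty c\, e^{cs}\, \mu(\{q > s\})\,\dd s$, I would split the integral at $s = c^{-1}$: the part over $[0, c^{-1}]$ is plainly finite, while over $[c^{-1},\infty)$ the integrand is dominated by $c\,\theta\gamma\, e^{cs(1 + \ln \gamma)}$, which decays exponentially because $\gamma < e^{-1}$ forces $1 + \ln \gamma < 0$. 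Hence the integral converges. This is precisely where the strict threshold is essential: at equality $\theta = \frac{e^2}{1+e^2}$ one has $\gamma = e^{-1}$ and $1 + \ln\gamma = 0$, making the tail integral diverge.

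The step I expect to be the main obstacle is not the estimate itself but the measurability of the Minkowski combinations entering the inclusion, since the sum of two Borel sets need not be Borel. I would handle this using the inner regularity of the Radon measure $\mu$: approximate both $A_{1/c}$ and $X \setminus t A_{1/c}$ from within by compact sets, whose Minkowski combination is again compact and hence Borel, apply \eqref{convex-inequality} to these compact pieces, and pass to the supremum. With this regularization in place the remainder of the argument is elementary.
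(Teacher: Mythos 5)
Your proposal is correct, and it is worth noting that the paper itself offers no proof of this statement: it is quoted verbatim from Bogachev (Thm.~4.3.7), described there as ``rather technical,'' and the paper only supplies a one-dimensional heuristic (a log-concave density on $\reals$ decays at least like a Laplace density). What you have written is a complete, self-contained argument via Borell's dilation lemma: the inclusion $X \setminus A \supseteq \tfrac{2}{t+1}(X\setminus tA) + \tfrac{t-1}{t+1}A$ for symmetric convex $A$, combined with the defining inequality \eqref{convex-inequality}, gives the tail bound $\mu(\{q> tc^{-1}\}) \le \theta\bigl((1-\theta)/\theta\bigr)^{(t+1)/2}$, and integrating this against $c\,e^{cs}$ is exactly what produces the threshold $\theta > e^2/(1+e^2)$, i.e.\ $\gamma = \sqrt{(1-\theta)/\theta} < e^{-1}$ and hence $1+\ln\gamma<0$ --- your computation recovers the stated constant precisely, which is a good consistency check. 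This is in substance the classical route (Borell, and the proof behind the cited theorem), so you have not found a new argument so much as reconstructed the one the paper outsources; the value added is that your write-up is elementary and quantitative, whereas the paper's treatment is a black-box citation. Your handling of the measurability issue is also the right one: the paper's Definition~\ref{def:convex-measure} states \eqref{convex-inequality} only for Borel sets, $A_{1/c}$ is merely $\mu$-measurable (since $q$ is only $\mu$-measurable), and Minkowski combinations of Borel sets need not be Borel, so the inner-regular approximation by compacts (whose convex combination is compact, hence Borel) followed by a supremum is exactly the standard repair. One small caution: your closing remark that the tail integral ``diverges'' at $\theta = e^2/(1+e^2)$ should be read as a statement about your dominating bound, not about $\int_X e^{cq}\,\dd\mu$ itself --- at equality the method yields nothing, but the integral can still be finite (e.g.\ for a Gaussian $\mu$ with $q$ scaled to hit the threshold), so the threshold is sharp for the argument, not necessarily for the conclusion.
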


\begin{theorem} \label{convex-measurable-exp}
  Let $X$ be a Banach space and let $\mu$ be a convex probability
  measure so that $\mu(X) =1$ and $\| u\|_X < \infty$ $\mu$-a.s. 
Then there exists a $\epsilon_0>0$ such that $\mu( \{ \epsilon_0\| u \|_X
\le1 \} ) > \frac{e^2}{1 + e^2} $. Moreover, $\int_X \exp( \epsilon \| u \|_X) \dd \mu(u) < \infty$ provided $0\leq \epsilon \leq \epsilon_0$.
\end{theorem}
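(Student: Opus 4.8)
The plan is to apply Theorem \ref{bogachev-convex-measurable-exp} with the measurable seminorm $q := \|\cdot\|_X$. Since the norm is continuous on $X$ it is Borel measurable, hence $\mu$-measurable, and by hypothesis it is finite $\mu$-a.e.; thus $q$ meets the structural requirements of that theorem. The only nontrivial ingredient to supply is a constant $c>0$ for which the sublevel set $\{u : q(u) \le c^{-1}\}$ carries more than $\frac{e^2}{1+e^2}$ of the mass of $\mu$. Establishing the existence of such a constant is precisely the first assertion of the theorem, with $c = \epsilon_0$.

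For the first assertion, I would exploit continuity of $\mu$ from below. Consider the increasing family of closed balls $B_R := \{u \in X : \|u\|_X \le R\}$ for $R>0$. Since $\|u\|_X < \infty$ $\mu$-a.s., the union $\bigcup_{R>0} B_R$ equals $\{u : \|u\|_X < \infty\}$ up to a $\mu$-null set, so $\mu\big(\bigcup_{R>0} B_R\big) = 1$. By continuity from below, $\mu(B_R) \uparrow 1$ as $R \to \infty$. Because $\frac{e^2}{1+e^2} < 1$, there is some $R_0>0$ with $\mu(B_{R_0}) > \frac{e^2}{1+e^2}$. Setting $\epsilon_0 := 1/R_0$, the event $\{\epsilon_0 \|u\|_X \le 1\}$ coincides with $B_{R_0}$, which gives $\mu(\{\epsilon_0 \|u\|_X \le 1\}) > \frac{e^2}{1+e^2}$, as required.

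With $\epsilon_0$ in hand, I would invoke Theorem \ref{bogachev-convex-measurable-exp} with $q = \|\cdot\|_X$ and $c = \epsilon_0$: since $\{q(u) \le \epsilon_0^{-1}\} = B_{R_0}$ has measure exceeding $\frac{e^2}{1+e^2}$, the theorem yields $\int_X \exp(\epsilon_0 \|u\|_X)\,\dd\mu(u) < \infty$. Finally, for any $0 \le \epsilon \le \epsilon_0$ and any $u$ we have $\epsilon \|u\|_X \le \epsilon_0 \|u\|_X$, so $\exp(\epsilon \|u\|_X) \le \exp(\epsilon_0 \|u\|_X)$ pointwise; the monotone comparison then gives $\int_X \exp(\epsilon \|u\|_X)\,\dd\mu(u) \le \int_X \exp(\epsilon_0 \|u\|_X)\,\dd\mu(u) < \infty$, completing the proof.

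I expect the only real content to lie in recognizing that the existence of $\epsilon_0$ reduces to continuity from below together with the a.e.-finiteness of the norm; once this is identified, Theorem \ref{bogachev-convex-measurable-exp} does the heavy lifting of converting a concentration threshold into exponential integrability, and the extension to smaller $\epsilon$ is immediate by monotonicity. The only subtlety to watch is bookkeeping: matching the sublevel set $\{q \le c^{-1}\}$ of Theorem \ref{bogachev-convex-measurable-exp} to the ball $\{\epsilon_0 \|u\|_X \le 1\}$ in the statement, which forces the choice $c = \epsilon_0 = 1/R_0$.
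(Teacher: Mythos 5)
Your proof is correct, and it reaches the concentration estimate by a genuinely different mechanism than the paper. The paper's proof exploits the fact that a convex measure is Radon: inner regularity produces a compact set $K$ with $\mu(K) > \frac{e^2}{1+e^2}$, and compactness (via a finite cover by balls) forces $K$ to lie inside a single ball, which yields $\epsilon_0$. You instead use only continuity from below: the closed balls $B_R$ increase to all of $X$ (every element of a Banach space has finite norm, so the a.s.-finiteness hypothesis is not even needed for this step), hence $\mu(B_R) \uparrow 1$ and some $B_{R_0}$ already carries mass exceeding $\frac{e^2}{1+e^2}$. Your route is more elementary --- it requires nothing beyond countable additivity of a Borel probability measure, whereas the paper invokes the Radon property --- and it would extend verbatim to any $\mu$-measurable seminorm finite $\mu$-a.e., since the sublevel sets $\{q \le n\}$ still increase to a set of full measure. (The only cosmetic point: continuity from below applies to countable increasing families, so one should pass through $R = n \to \infty$; since the balls are nested this is immediate.) Both proofs then hand off identically to Theorem \ref{bogachev-convex-measurable-exp}, and you additionally make explicit the monotonicity step giving integrability for all $0 \le \epsilon \le \epsilon_0$, which the paper leaves implicit.
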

\begin{proof}
Since $\mu$ is a convex measure then it is Radon by definition and so
there exists a compact set $K \subset X$ such that $\mu(K) >
\frac{e^2}{1 + e^2}$. Since $K$ is compact then it can be covered by a
finite collection of open balls which in turn implies that it can be
covered by a single open ball in $X$. This guarantees the existence of the
constant $\epsilon_0 >0$ so that $\mu( \{ \epsilon_0\| u\|_X \le 1 \}) >
\frac{e^2}{1 + e^2}$. 
\myhl{Then from the convexity of $\mu_0$ and
 Theorem~\ref{bogachev-convex-measurable-exp} it follows that
$\exp( \epsilon_0 \| u\|_X )$ is integrable.}
\end{proof}


\myhl{We emphasize that the key step in the proof of Theorem
  \ref{bogachev-convex-measurable-exp} is the assumption that the
  measure 
$\mu$ is convex. The proof of this theorem is rather technical \cite[pp.~114]{bogachev-malliavin}. To gain some intuition about this result consider a convex random variable $\xi$
in $\reals$ with Lebesgue density $\pi_{\xi}(x) = \exp(- \mcl{R}(x))$,
where $\mcl{R}$ is a convex function. The tail behavior of $\pi_\xi$
is tied to the rate of growth of $\mcl{R}$. The
slower the growth of $\mcl{R}$ is the heavier the tails of $\pi_\xi$
will be. However, if $\mcl{R}$ is convex then at the very least $\mcl{R}(x) =
\mcl{O}(|x|)$ 
for large $|x|$. This means that the tails of $\pi_\xi$ will decay at
least like that of the Laplace distribution (with Lebesgue density $\frac{1}{2\lambda} \exp(-
\lambda |x|)$), which also has exponential
tails. Therefore, $\xi$ has exponential tails as well.}  

We are now in a position to connect the theory of convex measures to the question of well-posedness
of inverse problems from the previous section. 
\begin{theorem}\label{convex-bayesian-inverseproblems}
Let  $X, Y$ be Banach spaces and let $\mu_0$ be a convex probability
measure on $X$. Let $\kappa >0$ be the
largest constant so that $\int_X \exp( \kappa \| u\|_X) \dd \mu_0(u) < \infty$,
following Theorem \ref{convex-measurable-exp}. Let $\Phi:X\times
Y\rightarrow \mathbb{R}$ be a likelihood potential satisfying
Assumption \ref{assumption-on-likelihood} with constants $\alpha_1,
\alpha_2 \ge 0$. 
Given a fixed $y \in Y$, consider the Bayesian inverse problem of finding the measure $\mu^y \ll \mu_0$
given by \eqref{bayes-rule}.
 Then if $\kappa \ge \alpha_1 +2  \alpha_2 $
\begin{enumerate}[(i)]
\item The inverse problem \eqref{bayes-rule} is well-posed.

\item If $\Phi_N$  satisfies the conditions of Theorem \ref{stability}
  with $\alpha_3\ge 0$ and $\kappa \ge \alpha_1 + 2 \alpha_3$
then the resulting approximation $\mu^y_N$ to the posterior $\mu^y$ is consistent.
\item If $\Phi(\cdot; y) : X \to \reals$ is a convex function then
  $\mu^y$ is a convex measure.
\end{enumerate}
 \end{theorem}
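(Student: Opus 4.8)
The plan is to recognize that all three parts follow by assembling results already established in the excerpt, the crucial bridge being that convexity of $\mu_0$ automatically supplies the exponential-tail hypothesis of Assumption \ref{assumption-on-prior}. Since $\mu_0$ is a convex probability measure on $X$ with $\|u\|_X < \infty$ $\mu_0$-almost surely, Theorem \ref{convex-measurable-exp} guarantees an $\epsilon_0 > 0$ with $\int_X \exp(\epsilon_0 \|u\|_X)\,\dd\mu_0(u) < \infty$. Hence $\mu_0$ satisfies Assumption \ref{assumption-on-prior}, and the constant $\kappa$, defined as the largest admissible exponent, is strictly positive and well-defined.

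For part (i), I would first note that the hypothesis $\kappa \ge \alpha_1 + 2\alpha_2$ forces $\kappa \ge \alpha_1$ since $\alpha_2 \ge 0$. With Assumption \ref{assumption-on-likelihood}(i)--(iii) in hand and $\kappa \ge \alpha_1$, Theorem \ref{existence-uniqueness} yields a unique Radon posterior $\mu^y \ll \mu_0$ given by \eqref{bayes-rule}. With Assumption \ref{assumption-on-likelihood}(i),(ii),(iv) and the stronger inequality $\kappa \ge \alpha_1 + 2\alpha_2$, Theorem \ref{robustness} delivers the bound $d_H(\mu^y, \mu^{y'}) \le C\|y - y'\|_Y$ on bounded sets, which is exactly the stability requirement of Definition \ref{def-existence-uniqueness}. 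Together these establish well-posedness. Part (ii) is even more direct: the hypotheses demanded by Theorem \ref{stability} hold by assumption, and Assumption \ref{assumption-on-prior} together with $\kappa \ge \alpha_1 + 2\alpha_3$ is precisely what that theorem needs, so $d_H(\mu^y, \mu^y_N) \le D\psi(N) \to 0$, giving consistency in the sense of Definition \ref{def-stability}.

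The one part carrying genuine content beyond bookkeeping is (iii), which I would settle by invoking Theorem \ref{absolutely-convex} with $V := \Phi(\cdot; y)$. I must verify its hypotheses: $\mu_0$ is convex by assumption; $V$ is continuous in $u$ by Assumption \ref{assumption-on-likelihood}(iii), hence $\mu_0$-measurable, and convex by the hypothesis of (iii); consequently $\exp(-V)$ is $\mu_0$-measurable; and the normalization $Z(y) = \int_X \exp(-\Phi(u;y))\,\dd\mu_0(u)$ satisfies $0 < Z(y) < \infty$, which was already shown in the proof of Theorem \ref{existence-uniqueness}. Since \eqref{bayes-rule} gives $\tfrac{\dd\mu^y}{\dd\mu_0} = Z(y)^{-1}\exp(-V)$, Theorem \ref{absolutely-convex} then asserts that $\mu^y$ is a convex probability measure, completing the argument.

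Because the proof is essentially an orchestration of earlier theorems, there is no serious analytical obstacle; the main point requiring care is confirming each hypothesis of Theorem \ref{absolutely-convex} in part (iii), in particular that the continuity and measurability of $\Phi(\cdot; y)$ come for free from Assumption \ref{assumption-on-likelihood}(iii) and that the two-sided control $0 < Z(y) < \infty$ is inherited from Theorem \ref{existence-uniqueness}. A minor subtlety worth noting is whether the supremal exponent $\kappa$ is attained; this is immaterial, since when $\kappa > \alpha_1 + 2\alpha_2$ one may work with any admissible exponent exceeding $\alpha_1 + 2\alpha_2$, and the integrability furnished directly by Theorem \ref{convex-measurable-exp} handles the borderline case.
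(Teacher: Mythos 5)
Your proposal is correct and follows essentially the same route as the paper: the paper's own proof is a one-line citation of Theorems \ref{existence-uniqueness}, \ref{robustness}, \ref{stability} and \ref{absolutely-convex}, and your argument simply makes explicit the hypothesis-checking (convexity supplying Assumption \ref{assumption-on-prior} via Theorem \ref{convex-measurable-exp}, the implication $\kappa \ge \alpha_1 + 2\alpha_2 \Rightarrow \kappa \ge \alpha_1$, and the verification $0 < Z(y) < \infty$ needed for Theorem \ref{absolutely-convex}) that the paper leaves implicit.
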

 \begin{proof}
The assertions follow from Theorems \ref{existence-uniqueness}, \ref{robustness}, \ref{stability}, \ref{absolutely-convex}.
 \end{proof} 

A few remarks are in order regarding the condition on the constant
$\kappa$.
 First, Theorem
\ref{convex-measurable-exp} guarantees that a constant $\kappa >0$
exists as long as $\mu_0$ is a convex measure. As before, given the
constants 
$\alpha_1$ and $\alpha_2$,  if we start with
a convex measure ${\mu}_0$ for which $\kappa < \alpha_1 + 2 \alpha_2$ then we can simply dilate this measure to obtain a new
  measure 
  $\bar{\mu}_0  =  \mu_0\circ c^{-1}$ where $c \in (0, \kappa/(
  \alpha_1 + 2 \alpha_2)) $. This new
  measure 
will satisfy the conditions of Theorem
\ref{convex-bayesian-inverseproblems}.
Finally, we have the following corollary to this theorem which 
gives
the 
well-posedness of the inverse problem without the need to specify the
parameter $\kappa$ whenever the likelihood $\Phi$ satisfies a 
stronger version of Assumption \ref{assumption-on-likelihood}(i) and (iv).
\begin{corollary}\label{convex-bayesian-inverseproblems-poly-bounded}
Consider the setting of Theorem
\ref{convex-bayesian-inverseproblems}. If $\Phi$ satisfies Assumption 
\ref{assumption-on-likelihood}(i) and (iv) for any constants
$\alpha_1, \alpha_2 > 0$ then the result of Theorem
\ref{convex-bayesian-inverseproblems} holds if $\mu_0$ is any convex
measure such that $\mu_0(X) = 1$.
\end{corollary}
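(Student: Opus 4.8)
The plan is to reverse the roles of the constants in Theorem~\ref{convex-bayesian-inverseproblems}. There, the prior tail constant $\kappa$ is fixed and one must check the coupling $\kappa \ge \alpha_1 + 2\alpha_2$ against the likelihood's growth constants. Here the situation is the opposite: the convex prior supplies a fixed $\kappa>0$, while the hypothesis grants us the freedom to take $\alpha_1,\alpha_2$ as small as we please. The whole corollary then reduces to choosing these constants small enough to meet the coupling condition.

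First I would apply Theorem~\ref{convex-measurable-exp} to the prior. Since $\mu_0$ is a convex probability measure on $X$ with $\mu_0(X)=1$, and since $\|u\|_X<\infty$ holds $\mu_0$-a.s.\ automatically (every element of the Banach space $X$ has finite norm), the theorem yields some $\epsilon_0>0$ with $\int_X \exp(\epsilon_0\|u\|_X)\,\dd\mu_0(u)<\infty$. Writing $\kappa:=\epsilon_0$, this establishes that every convex prior with unit mass satisfies Assumption~\ref{assumption-on-prior} with a strictly positive constant, with no further hypotheses on $\mu_0$.

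Next I would exploit the hypothesis that $\Phi$ obeys Assumptions~\ref{assumption-on-likelihood}(i) and (iv) for every $\alpha_1,\alpha_2>0$. With the $\kappa>0$ from the previous step in hand, I would fix any admissible pair satisfying $\alpha_1+2\alpha_2\le\kappa$, for instance $\alpha_1=\alpha_2=\kappa/4$, which gives $\alpha_1+2\alpha_2=3\kappa/4<\kappa$. Assumptions~\ref{assumption-on-likelihood}(ii) and (iii) are inherited unchanged from the ambient setting of Theorem~\ref{convex-bayesian-inverseproblems}, since they involve neither $\alpha_1$ nor $\alpha_2$. All hypotheses of Theorem~\ref{convex-bayesian-inverseproblems} are therefore in force for this choice, so parts (i) and (iii) apply verbatim and give well-posedness together with convexity of $\mu^y$ whenever $\Phi(\cdot;y)$ is convex; part (iii) in fact needs no constraint on $\kappa$ at all. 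For the consistency conclusion in part (ii), the same device works after noting that we may shrink $\alpha_1$ further: provided the discretization constant $\alpha_3$ in \eqref{discretization-assumption} satisfies $2\alpha_3<\kappa$, one can pick $\alpha_1\in(0,\kappa-2\alpha_3)$ so that $\kappa\ge\alpha_1+2\alpha_3$ and Theorem~\ref{stability} applies.

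I do not expect a substantive analytic difficulty; the corollary is a bookkeeping consequence of the two facts above. The one point deserving care is the logical force of the phrase ``for any constants $\alpha_1,\alpha_2>0$'', which must be read as a universal quantifier: it is precisely this universality that lets the likelihood's constants be tuned to whatever $\kappa$ the convex prior delivers, and hence lets us dispense with any explicit specification of $\kappa$ in the statement.
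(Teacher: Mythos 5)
Your proof is correct and follows exactly the argument the paper intends: the corollary is stated without a formal proof, but the preceding remarks make clear it rests on Theorem \ref{convex-measurable-exp} supplying some $\kappa>0$ for any convex probability measure with $\mu_0(X)=1$, after which the universal quantifier on $\alpha_1,\alpha_2$ (and $\alpha_3$ for part (ii)) lets one choose these constants below $\kappa$ and invoke Theorem \ref{convex-bayesian-inverseproblems}. Your handling of the quantifier reading, the automatic a.s.\ finiteness of $\|u\|_X$ under $\mu_0(X)=1$, and the separate bookkeeping for $\alpha_3$ in the consistency part are all consistent with the paper's setup.
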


\subsection{Construction of convex priors} \label{construction-of-convex-priors}
So far we have established that convex measures are a convenient
choice as priors for inverse problems mainly due to the fact that they
have exponentially decaying tails.
However, we need to 
discussed a method for construction of 
convex prior measures in practical situations. It is often difficult
to find a convex measure with a 
simple form, such as a Gaussian that can be identified by a mean function and a covariance operator. 
 In this section we consider a general recipe for the construction of convex 
priors on Banach spaces that have an
unconditional basis. This is relevant to practical applications since
interesting Banach or Hilbert spaces are often separable and have a
basis. Interesting
spaces include the $L^p$ spaces, Sobolev and Besov
spaces or the space of continuous functions on bounded intervals.

In the following we use the shorthand notation $\{ c_k \} $ 
to denote a sequence of elements $\{c_k\}_{k=1}^\infty$. Also, $\EE \xi$
and $\VV \xi$ denote the expectation and variance of the random
variable $\xi$.
 We will need an assumption regarding the space $X$, which will greatly simplify the construction: 
\begin{assumption}\label{assumption-on-X}
 $X$ is a Banach space with an unconditional
  and normalized Schauder basis $\{ x_k\}$.
\end{assumption}


Our construction of the convex priors is 
inspired by the Karhunen--Lo{\'e}ve expansion of Gaussian measures \cite[Thm.~3.5.1]{bogachev-gaussian} and the construction of the Besov priors in \cite{lassas-invariant, dashti-besov}.
Consider a sequence of convex probability measures 
$\{ \mu_k\}$ on $\reals$ and
corresponding random variables $\xi_k \sim \mu_k$ as well as a fixed
sequence 
$\{ \gamma_k \}$. Next, define the random variable
\begin{equation}
  \label{product-prior-sample}
  u  = \sum_{k =1}^\infty \gamma_k \xi_k x_k
\end{equation}
and take the prior measure $\mu_0$ to be the probability measure that is
induced by this random variable. \myhl{The prior measures in Examples \ref{example-deconvolution} and \ref{deconvolution-hierarchical} in Section \ref{sec:introduction}
are concrete examples of this type of prior.} Certain conditions on
$\{ \xi_k \}$ and $\{ \gamma_k \}$ are needed to ensure that the
induced measure $\mu_0$
is well-defined and is supported on $X$.

\begin{theorem}\label{product-prior-is-convex-ellp}
Suppose that $X$ satisfies Assumption \ref{assumption-on-X}.
Let $u$ be defined as in \eqref{product-prior-sample} and let $\mu$ be
the probability measure induced by $u$ on $X$. If $\{ \gamma_k^2 \} \in \ell^p(\reals)$ and $\{ \VV
  |\xi_k| \} \in \ell^q(\reals)$ for $1\le p, q \le \infty$ so that
  $1/p + 1/q = 1$ (with $p=1$ in the limiting case when $q=\infty$)
  then
  \begin{enumerate}[(i)]
  \item $\| u \|_X < \infty$ a.s.
      \item $\mu$ is a convex measure on $X$.
    \item $\int_X \exp( \epsilon \| u \|_X ) < \infty$ for a small
      enough $\epsilon >0$.
  \end{enumerate}
\end{theorem}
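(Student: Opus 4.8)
The plan is to prove (i) directly and then obtain (ii) and (iii) as immediate consequences of the results already established in this section, so that essentially all of the real work is concentrated in the almost-sure convergence of the series \eqref{product-prior-sample}.

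For (i) I would study the partial sums $u_N := \sum_{k=1}^N \gamma_k \xi_k x_k$ and show that they converge almost surely in $X$; since $X$ is complete this yields $u \in X$ almost surely, i.e.\ $\|u\|_X < \infty$ almost surely. The first observation is that the two summability hypotheses combine, via H\"older's inequality with the conjugate exponents $1/p + 1/q = 1$, into the single bound $\sum_k \gamma_k^2\, \VV|\xi_k| \le \|\{\gamma_k^2\}\|_{\ell^p}\, \|\{\VV|\xi_k|\}\|_{\ell^q} < \infty$, and this finiteness is what will drive the tail estimate. Because $X$ carries a (normalized, unconditional) Schauder basis it is separable, and the summands $\gamma_k \xi_k x_k$ are independent $X$-valued random elements; I would therefore invoke the It\^o--Nisio theorem to reduce almost-sure convergence of $\{u_N\}$ to the Cauchy property in probability of the tails $\sum_{M < k \le N} \gamma_k \xi_k x_k$. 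To estimate these tails I would symmetrize, introducing independent copies $\xi_k'$ of the $\xi_k$ together with Rademacher signs, and exploit the contraction principle built into the unconditionality of $\{x_k\}$ (for any $|\theta_k| \le 1$ one has $\|\sum \theta_k a_k x_k\|_X \le K \|\sum a_k x_k\|_X$, with $K$ the unconditional constant) together with a L\'evy--Ottaviani maximal inequality; this is what converts the scalar variance control $\sum_k \gamma_k^2\, \VV|\xi_k| < \infty$ into smallness of $\PP(\|u_N - u_M\|_X > \delta)$ uniformly as $M, N \to \infty$.

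I expect the main obstacle to be precisely this last step. In a general Banach space equipped with only an unconditional basis we have no type or cotype at our disposal, so $\EE \|u_N - u_M\|_X^2$ is \emph{not} controlled by the sum of coordinate variances, and the naive triangle-inequality bound $\EE \sum_k |\gamma_k|\, |\xi_k|$ is useless since it would require control of the means $\EE|\xi_k|$, which the hypotheses do not provide. The whole weight of the argument therefore rests on exploiting unconditionality, and on handling the possible non-symmetry of the convex laws $\mu_k$ by first centering $\xi_k = \EE \xi_k + (\xi_k - \EE \xi_k)$ and treating the centered part by the symmetrization above, the deterministic mean series requiring its own convergence check.

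Granting (i), parts (ii) and (iii) are short. For (ii), observe that the law $\mu_N$ of $u_N$ is the image of the product measure $\mu_1 \otimes \cdots \otimes \mu_N$ under the continuous linear map $T_N : \reals^N \to X$ given by $T_N(t_1, \dots, t_N) = \sum_{k=1}^N \gamma_k t_k x_k$. The product measure is convex by Corollary \ref{convexity-preservation}(iii), and its linear image is convex by Corollary \ref{convexity-preservation}(i), so each $\mu_N$ is convex; since $u_N \to u$ almost surely by (i) we have $\mu_N \to \mu$ weakly, and Corollary \ref{convexity-preservation}(ii) then gives that $\mu$ is convex. Finally, (iii) follows at once from (i) and (ii): $\mu$ is a convex probability measure with $\|u\|_X < \infty$ $\mu$-almost surely, so Theorem \ref{convex-measurable-exp} supplies an $\epsilon_0 > 0$ with $\int_X \exp(\epsilon \|u\|_X)\, \dd\mu(u) < \infty$ for all $0 \le \epsilon \le \epsilon_0$.
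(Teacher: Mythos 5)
Your handling of (ii) and (iii) is correct, and (ii) is even a tidy alternative to the paper's route: the paper pushes forward a Radon extension of the countable product $\bigotimes_k \mu_k$ under the synthesis map $Q(\{c_k\}) = \sum_k \gamma_k c_k x_k$, whereas you use the laws of the partial sums (convex by Corollary \ref{convexity-preservation}(iii) and (i)) and pass to the weak limit via Corollary \ref{convexity-preservation}(ii); both work once (i) is available. The genuine gap is in (i), and it sits exactly where you predicted the "main obstacle" would be. Your plan rests on the claim that symmetrization, the contraction principle for the unconditional basis, and a L\'evy--Ottaviani inequality convert $\sum_k \gamma_k^2 \VV|\xi_k| < \infty$ into smallness of $\PP\left( \|u_N - u_M\|_X > \delta \right)$. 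You never exhibit that estimate, and in fact no such estimate exists at this level of generality: take $X = \ell^1$ with its canonical basis (normalized and unconditional), $\gamma_k = 1/k$, and $\xi_k$ i.i.d.\ uniform on $(-1,1)$, which is convex, centered and symmetric. Then $\{\gamma_k^2\} \in \ell^1$ and $\{\VV|\xi_k|\} \in \ell^\infty$, so the hypotheses hold with $p=1$, $q = \infty$, yet $\|u\|_{\ell^1} = \sum_k |\xi_k|/k = \infty$ almost surely, since $\sum_k \EE \min\left( |\xi_k|/k, 1\right) = \sum_k 1/(2k)$ diverges (Lemma \ref{sum-of-positive-random-variables}). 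The summands here are already symmetric, so no amount of symmetrization or contraction can rescue the step: without type-2 structure, coordinate variances simply do not control the norm. The deterministic mean series you defer to "its own convergence check" is likewise uncheckable from the hypotheses, which say nothing about $\EE \xi_k$; Dirac masses are convex, so $\xi_k \equiv 1$ with $\gamma_k = 1/k$ in $\ell^1$ is admissible and diverges.

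For fairness you should know what the paper itself does: it is precisely the "naive triangle-inequality" route you dismissed. The paper bounds $\|u_M - u_N\|_X \le \sum_{k=N+1}^{M} |\gamma_k \xi_k|$ using the normalized basis, and then invokes Kolmogorov's two-series theorem for $\sum_k |\gamma_k\xi_k|$ using only the variance bound $\sum_k \gamma_k^2 \VV|\xi_k| < \infty$. But the two-series theorem also requires convergence of $\sum_k \EE|\gamma_k \xi_k|$ --- exactly the mean control you observed is missing. So your objection is on target and applies verbatim to the paper's own proof: what you have uncovered is not a removable difficulty in your approach but a defect in the theorem as stated, whose conclusion is false in $\ell^1$ as shown above. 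The statement becomes provable (by the paper's elementary argument) under an added hypothesis such as $\sum_k |\gamma_k| \EE|\xi_k| < \infty$, or under structural assumptions on $X$ (e.g.\ a Hilbert space, where type 2 makes variance control sufficient for the centered part). As a proof of the stated theorem, however, your proposal has a gap that cannot be closed.
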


\begin{proof}
(i) By H{\"{o}}lder's inequality we have
$$
\sum_{k=1}^\infty \VV |\gamma_k \xi_k| = \sum_{k=1}^\infty |\gamma_k|^2
\VV |\xi_k| \le \| \{ |\gamma_k|^2 \} \|_{\ell^p} \| \{ \VV |\xi_k| \}
\|_{\ell^q} < \infty.
$$
Therefore, by Kolmogorov's two series theorem
\cite[Lemma~3.16]{kallenberg} we have that $\sum_{k=1}^\infty
|\gamma_k \xi_k | <\infty$ a.s.
Now consider positive integers $M > N > 0$ and let $u_N = \sum_{k=1}^N
\gamma_k \xi_k x_k$. Then,
using the triangle inequality we have 
$$
\| u_M - u_N \|_X = \left\| \sum_{k=N+1}^M \gamma_k \xi_k x_k \right\|_X
\le \sum_{k=N+1}^\infty |\gamma_k \xi_k |.
$$
Taking the limit as $M,N \to \infty$ the sum on the right hand
side will vanish a.s. Then, by the dominated convergence theorem
 the sequence $\{ u_N\}_{N=1}^\infty$ is
Cauchy a.s. and so $\| u \|_X < \infty$ a.s.

(ii) Let $K_i$ denote the support of $\mu_i$ on $\reals$ and let
$\nu_i$ denote the sequence of  probability measures that are obtained by
restricting $\mu_i$ to $K_i$.
Let $\tilde{\nu} := \bigotimes_{i =1}^\infty \nu_i$ denote the quasi-measure that is 
generated by the countable product of the $\nu_i$ on $K:=\bigotimes_{i=1}^\infty K_i$.
By \cite[Thm.~7.6.2]{bogachev2} $\tilde{\nu}$ has a unique extension to a Radon measure $\nu$
on $K$. Now consider the operator
$$
Q : K \to X \qquad Q( \{ c_k \}) = \sum_{k=1}^\infty \gamma_k c_k x_k. 
$$
This operator is well defined following (i), and it is linear
and continuous. Take  $\mu$ to be the push-forward of the measure $\nu$ under the mapping 
$ Q$. Then it follows from Corollary
\ref{convexity-preservation} that
$\mu$ is a convex measure on $X$.

(iii) This result
follows directly from the previous assertions and Theorem \ref{convex-measurable-exp}.
\end{proof} 
\begin{corollary} \label{product-prior-is-convex-iid}
Let $u$ be defined as in \eqref{product-prior-sample} and assume that
the $\{ \xi_k\}_{k=1}^\infty$ are i.i.d.\  convex random variables that are centered. If $\{
\gamma_k^2 \} \in \ell^1(\reals)$ then the result
of Theorem \ref{product-prior-is-convex} holds.
\end{corollary}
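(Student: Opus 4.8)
The plan is to realise Corollary \ref{product-prior-is-convex-iid} as the limiting endpoint case $p = 1$, $q = \infty$ of Theorem \ref{product-prior-is-convex-ellp}, so that no new machinery is needed beyond verifying that the i.i.d.\ and centering hypotheses supply exactly the integrability required to invoke that theorem.

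First I would exploit the i.i.d.\ assumption: since all the $\xi_k$ share a common law $\mu_1$, the sequence $\{ \VV |\xi_k| \}$ is constant, equal to $v := \VV|\xi_1|$. The essential point is that this common value is finite. Each $\xi_1$ is a one-dimensional convex random variable, so applying Theorem \ref{convex-measurable-exp} with $X = \reals$ shows that $\int_\reals \exp(\epsilon |x|) \dd\mu_1(x) < \infty$ for some small $\epsilon > 0$. The elementary pointwise bound $\exp(\epsilon|x|) \ge \tfrac{1}{2}\epsilon^2 x^2$ then gives $\EE \xi_1^2 \le \tfrac{2}{\epsilon^2}\int_\reals \exp(\epsilon|x|)\dd\mu_1(x) < \infty$, whence $v = \VV|\xi_1| < \infty$. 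Consequently the constant sequence $\{ \VV|\xi_k| \} = (v, v, \dots)$ lies in $\ell^\infty(\reals)$.

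With this in hand I would take $q = \infty$ and, correspondingly, $p = 1$, which is precisely the limiting pairing $1/p + 1/q = 1$ admitted in the statement of Theorem \ref{product-prior-is-convex-ellp}. The hypothesis $\{ \gamma_k^2 \} \in \ell^1(\reals)$ is then exactly the required $\ell^p$ condition, and the factorisation $\sum_k \gamma_k^2 \VV|\xi_k| = v \sum_k \gamma_k^2 < \infty$ from the first line of that theorem's proof goes through verbatim. The centering assumption $\EE \xi_k = 0$ plays its role here: it ensures the independent increments $\gamma_k \xi_k x_k$ are centered, so that summability of the variances delivers almost sure convergence of the partial sums $u_N$, and hence the a.s.\ finiteness $\|u\|_X < \infty$ underlying assertion (i). The remaining assertions---convexity of the induced measure $\mu$ and its exponential integrability---are then inherited directly from parts (ii) and (iii) of Theorem \ref{product-prior-is-convex-ellp}, since those rely only on (i) together with Corollary \ref{convexity-preservation} and Theorem \ref{convex-measurable-exp}, none of which are sensitive to the endpoint choice $p = 1$.

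The only genuine obstacle is the finiteness of the common variance $v$; everything else is bookkeeping. This is where convexity is indispensable: without the log-concavity of the $\xi_k$ one could not rule out heavy tails, and $\VV|\xi_k|$ could fail to be finite, breaking the reduction to Theorem \ref{product-prior-is-convex-ellp}. I would therefore present the moment-finiteness step carefully, emphasising that it is the one-dimensional manifestation of the exponential-tail phenomenon described after Theorem \ref{convex-measurable-exp}.
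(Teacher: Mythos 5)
Your proposal is correct and takes essentially the same route as the paper: the paper's own proof likewise invokes Theorem \ref{convex-measurable-exp} (applied to the common law of the $\xi_k$) to conclude that the variance of $\xi_1$ is finite, hence $\{ \VV \xi_k \} \in \ell^\infty(\reals)$ by the i.i.d.\ assumption, and then applies Theorem \ref{product-prior-is-convex-ellp} at the endpoint pairing $p=1$, $q=\infty$. Your explicit derivation of the finite second moment from exponential integrability, via the pointwise bound $\exp(\epsilon|x|) \ge \tfrac{1}{2}\epsilon^2 x^2$, merely spells out a step the paper leaves implicit.
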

\begin{proof}
  By Theorem \ref{convex-measurable-exp} we have that $\VV\xi_1^2 <\infty$ and so $\{ \VV \xi_k \} \in \ell^\infty(\reals)$. The
  result now follows from Theorem \ref{product-prior-is-convex-ellp}.
\end{proof}

It is possible to prove different versions of Theorem
\ref{product-prior-is-convex} with other conditions on the 
sequences $\{ \gamma_k \}$ and $\{ \xi_k \}$. We shall consider 
one more version of this theorem that
may be of interest in practical applications.
 First, we recall the following
classical result on the convergence of a sequence of positive random
numbers. 
\begin{lemma}[{\cite[Proposition~3.14]{kallenberg}}] \label{sum-of-positive-random-variables}
  Let  $ \{ \xi_k\}$ be a sequence of random variables on
  $\reals^+$. Then $\sum_{k=1}^\infty \xi_k < \infty$ a.s. if and only
  if $\sum_{i=1}^\infty \EE \min \left(  \xi_k, 1\right) <\infty$.
\end{lemma}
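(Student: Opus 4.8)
The plan is to prove the two implications separately, because they are of quite different natures: the ``if'' direction is elementary and uses only nonnegativity, whereas the ``only if'' direction genuinely requires the $\xi_k$ to be independent (which is exactly the setting in which this lemma is applied, cf.\ the construction \eqref{product-prior-sample}). Throughout I would write $\eta_k := \min(\xi_k, 1)$ for the truncated variables, which are nonnegative, bounded by $1$, and satisfy $\eta_k \le \xi_k$.

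For the \emph{if} direction, suppose $\sum_k \EE \eta_k < \infty$. Since the terms are nonnegative, Tonelli's theorem lets me interchange sum and expectation, so $\EE \sum_k \eta_k = \sum_k \EE \eta_k < \infty$ and hence $\sum_k \eta_k < \infty$ almost surely. In particular $\eta_k \to 0$ a.s., so for almost every $\omega$ there is an index beyond which $\eta_k(\omega) < 1$; but $\min(\xi_k, 1) < 1$ forces $\xi_k = \eta_k$ there. Thus $\sum_k \xi_k$ and $\sum_k \eta_k$ differ by only finitely many a.s.-finite terms, giving $\sum_k \xi_k < \infty$ a.s. No independence enters here.

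For the \emph{only if} direction I would argue by contraposition, now using independence of the $\eta_k$. Suppose $\sum_k \EE \eta_k = \infty$; I want to deduce $\sum_k \eta_k = \infty$ a.s., which contradicts $\sum_k \xi_k < \infty$ a.s.\ (the latter forces $\sum_k \eta_k < \infty$ a.s.\ since $\eta_k \le \xi_k$). The tool is the Laplace transform. Convexity of $x \mapsto e^{-x}$ gives the chord bound $e^{-x} \le 1 - (1 - e^{-1})x$ on $[0,1]$, whence $\EE e^{-\eta_k} \le 1 - c\,\EE \eta_k$ with $c := 1 - e^{-1} > 0$. Independence factorizes the product, so for each $n$
\begin{equation*}
\EE \exp\Bigl( -\sum_{k=1}^n \eta_k \Bigr) = \prod_{k=1}^n \EE e^{-\eta_k} \le \prod_{k=1}^n \bigl( 1 - c\,\EE \eta_k \bigr) \le \exp\Bigl( -c \sum_{k=1}^n \EE \eta_k \Bigr).
\end{equation*}
Letting $n \to \infty$, the right-hand side tends to $0$ because $\sum_k \EE \eta_k = \infty$, while monotone convergence identifies the limit on the left with $\EE \exp(-\sum_k \eta_k)$. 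This expectation therefore vanishes, and since the integrand is nonnegative it must equal $0$ a.s., i.e.\ $\sum_k \eta_k = \infty$ a.s., the desired contradiction.

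The main obstacle is precisely the necessity of independence in the converse, and I would flag this explicitly, since without it the statement is false: taking $\xi_k = \mb{1}_{\{U < 1/k\}}$ for a single uniform variable $U$ on $[0,1]$ gives $\sum_k \xi_k < \infty$ a.s.\ (each realization has only finitely many nonzero terms) yet $\sum_k \EE \min(\xi_k, 1) = \sum_k 1/k = \infty$. Thus any correct proof must exploit independence; the Laplace-transform factorization above is the most economical route, though one could alternatively invoke the Kolmogorov three-series theorem applied to the bounded variables $\eta_k$.
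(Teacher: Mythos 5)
Your proof is correct, but there is nothing in the paper to compare it against: the lemma is stated purely as a citation to Kallenberg and is never proved in the text, so your argument functions as a self-contained replacement for that citation. Both halves are sound --- the sufficiency direction via Tonelli plus the observation that once $\min(\xi_k,1)<1$ one has $\xi_k=\min(\xi_k,1)$, so the two series differ a.s.\ in only finitely many finite terms; and the necessity direction via the factorization $\EE\exp(-\sum_{k=1}^n\eta_k)=\prod_{k=1}^n\EE e^{-\eta_k}\le\exp(-c\sum_{k=1}^n\EE\eta_k)$ with $c=1-e^{-1}$, followed by letting $n\to\infty$. (Your appeal to ``monotone convergence'' for the decreasing sequence $\exp(-\sum_{k\le n}\eta_k)$ is really dominated convergence, but this is cosmetic.) The most valuable part of your proposal is the diagnosis that independence is indispensable for the ``only if'' half: the paper's statement omits this hypothesis, and as literally written the lemma is false, exactly as your counterexample $\xi_k=\mb{1}_{\{U<1/k\}}$ with $U\sim\mcl{U}(0,1)$ shows, since $\sum_k\xi_k\le 1/U<\infty$ a.s.\ while $\sum_k\EE\min(\xi_k,1)=\sum_k 1/k=\infty$. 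Kallenberg's Proposition~3.14 does assume independence. Fortunately the omission is harmless downstream: the paper invokes the lemma only in Theorem~\ref{product-prior-is-convex}, and only in the direction from $\sum_k\EE\min(|\gamma_k\xi_k|,1)<\infty$ to a.s.\ convergence of $\sum_k|\gamma_k\xi_k|$, which is precisely the direction that survives without independence (and the $\xi_k$ there are independent in any case); still, the statement of the lemma should be amended to include the independence hypothesis.
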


\begin{theorem} \label{product-prior-is-convex}
Let $u$ be defined as in \eqref{product-prior-sample} and let $\mu$ be
the probability measure induced by $u$ on $X$. Then the result of
Theorem \ref{product-prior-is-convex-ellp} holds if the assumptions on
$\{ \gamma_k\}$ and $\{\xi_k\}$ are replaced by the assumption that $\sum_{i=1}^\infty
\EE  \min \left( | \gamma_k \xi_k |,  1 \right)  < \infty$.
\end{theorem}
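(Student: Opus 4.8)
The plan is to observe that the hypotheses of Theorem~\ref{product-prior-is-convex-ellp} enter its proof only through a single consequence, namely the almost sure absolute convergence $\sum_{k=1}^\infty |\gamma_k \xi_k| < \infty$. In that theorem this fact was extracted from the $\ell^p$--$\ell^q$ summability by way of Kolmogorov's two series theorem, but the new hypothesis $\sum_{k=1}^\infty \EE \min(|\gamma_k \xi_k|, 1) < \infty$ is exactly the criterion of Lemma~\ref{sum-of-positive-random-variables} ensuring the same almost sure statement. Consequently, once I reprove assertion (i) from the new hypothesis, assertions (ii) and (iii) transfer verbatim, since their proofs use only (i) together with the convexity structure and not the specific form of the summability assumption.

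For assertion (i), I would apply Lemma~\ref{sum-of-positive-random-variables} to the nonnegative random variables $|\gamma_k \xi_k|$. The hypothesis then yields $\sum_{k=1}^\infty |\gamma_k \xi_k| < \infty$ almost surely. I would then repeat the Cauchy argument from the proof of Theorem~\ref{product-prior-is-convex-ellp}: writing $u_N = \sum_{k=1}^N \gamma_k \xi_k x_k$ and using that the basis $\{x_k\}$ is normalized, the triangle inequality gives $\|u_M - u_N\|_X \le \sum_{k=N+1}^\infty |\gamma_k \xi_k|$ for $M > N$, and this tail vanishes almost surely. Completeness of $X$ then makes $\{u_N\}$ convergent almost surely, so $u$ is a well-defined element of $X$ with $\|u\|_X < \infty$ almost surely.

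With (i) in hand the remaining assertions are unchanged. For (ii), I would form the restricted product measure $\nu = \bigotimes_{k=1}^\infty \nu_k$ on $K = \bigotimes_{k=1}^\infty K_k$ via the extension theorem used before, note that assertion (i) guarantees the map $Q(\{c_k\}) = \sum_{k=1}^\infty \gamma_k c_k x_k$ is a well-defined continuous linear operator $K \to X$, and conclude from Corollary~\ref{convexity-preservation} that the push-forward $\mu = \nu \circ Q^{-1}$ is convex. Assertion (iii) then follows from (i), (ii) and Theorem~\ref{convex-measurable-exp}. I do not expect a genuine obstacle here: the whole content of the theorem is the recognition that the new hypothesis is an equivalent entry point to the same almost sure convergence, and the only point requiring care is confirming that Lemma~\ref{sum-of-positive-random-variables} applies, i.e.\ that the summands $|\gamma_k \xi_k|$ are nonnegative, which is immediate.
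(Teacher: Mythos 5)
Your proposal is correct and follows essentially the same route as the paper: the authors likewise observe that the only change needed is to derive $\sum_{k=1}^\infty |\gamma_k \xi_k| < \infty$ a.s.\ from Lemma~\ref{sum-of-positive-random-variables} instead of Kolmogorov's two series theorem, after which the remainder of the proof of Theorem~\ref{product-prior-is-convex-ellp} carries over unchanged. Your write-up is in fact more explicit than the paper's, which states this in a single sentence.
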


\begin{proof}
The proof is very similar to that of Theorem
\ref{product-prior-is-convex-ellp}. The only difference is
that the sum $\sum_{k=1}^\infty | \gamma_k \xi_k| < \infty$ a.s. following
Lemma \ref{sum-of-positive-random-variables}.
\end{proof}

In general, the choice of
the conditions on $\{ \gamma_k \}$ and $\{ \xi_k \}$ is an exercise in convergence of series of
independent random variables. We refer the reader to the set of
lecture notes \cite[Section~2]{stuart-bayesian-lecture-notes} for more
examples of this approach. 

We can further refine our result on the support of the measure $\mu$ that is
induced by $u$. Assumption \ref{assumption-on-X} implies that the space $X$
is isomorphic to the sequence space (see {\cite[Thm.~4.12]{heil-basis}})
\begin{equation}
   \label{sequence-space}
   \begin{aligned}
    & W := \left\{ \{w_k \} \in \reals : \sum_{k=1}^\infty w_k x_k  \text{ converges in } X \right\}, \\
  &\| \{w\} \|_W = \sup_{n \in \integers} \left\| \sum_{k=1}^n w_k x_k \right\|_X,
   \end{aligned}
 \end{equation}
and the synthesis operator $T: \{w_k\} \mapsto
\sum_{k=1}^\infty w_k x_k$ is an isomorphism between $X$ and $W$. To
this end, we can show that the support of
$\mu$ is a Hilbert space under mild
conditions.

\begin{theorem}\label{hilbert-space-support}
Let $u$ be defined as in \eqref{product-prior-sample} and let $\mu$ be
the probability measure that is induced by this random variable on $X$. Now
suppose that
the $\{ \xi_k\}$ are i.i.d.\ convex random variables on $\reals$. Also, let $\{ |\gamma_k|^2 \} \in
\ell^1(\reals)$.
 Then $\mu$ is
  concentrated on a separable Hilbert space $H \subseteq X$ where
$$
H := \left\{ u \in X :  \sum_{k=1}^\infty c_k^2 \hat{u}_k ^2 <\infty\right\}
\quad \text{with inner product} \quad
\langle u, v \rangle_H :=\sum_{k=1}^\infty c_k^2 \hat{h}_k \hat{v}_k.
$$
Here, $\{ \hat{u}_k \} := T^{-1}u$ denotes the sequence
of basis coefficients of an element $u \in X$ and
 $\{
c_k \}$ is a fixed sequence that decays sufficiently fast
so that $\sum_{i=1}^\infty c^2_k \gamma^2_k \xi_k^2 < \infty$ a.s. In
particular, it is sufficient if $\{ c_k \} \in \ell^2(\reals)$.
\end{theorem}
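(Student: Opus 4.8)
The plan is to split the statement into two tasks: (a) verify that $H$, with the inner product $\langle\cdot,\cdot\rangle_H$, is a separable Hilbert space sitting continuously inside $X$, and (b) show that the induced measure satisfies $\mu(H)=1$. Task (b) is the probabilistic content and is quite direct; task (a) is functional-analytic bookkeeping, but it hides the only genuine subtlety, namely completeness of $H$.

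First I would handle the concentration. Since $u=\sum_k\gamma_k\xi_k x_k$ and $\{x_k\}$ is a Schauder basis, the coordinate functionals recover $\hat u_k=\gamma_k\xi_k$, so $u\in H$ holds precisely when $\sum_k c_k^2\gamma_k^2\xi_k^2<\infty$, and $\mu(H)=1$ is equivalent to the almost-sure finiteness of this series. Because the $\xi_k$ are i.i.d.\ convex random variables, Theorem~\ref{convex-measurable-exp} applied to the law of $\xi_1$ gives $\EE\exp(\epsilon|\xi_1|)<\infty$ for small $\epsilon$, hence $\EE\,\xi_1^2<\infty$. I would then note
\[
\EE\Big[\sum_k c_k^2\gamma_k^2\xi_k^2\Big]=\EE[\xi_1^2]\sum_k c_k^2\gamma_k^2,
\]
and choose $\{c_k\}$ so that $\sum_k c_k^2\gamma_k^2<\infty$; since $\{\gamma_k^2\}\in\ell^1$ is in particular bounded, this holds whenever $\{c_k\}$ is bounded. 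A nonnegative series with finite expectation is finite a.s.\ (equivalently, invoke the Kolmogorov two-series argument used in the proof of Theorem~\ref{product-prior-is-convex-ellp}), so $\sum_k c_k^2\gamma_k^2\xi_k^2<\infty$ a.s.\ and $\mu(H)=1$.

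Next I would establish the Hilbert structure. Introduce the weighted sequence space $\ell^2_c:=\{\{w_k\}:\sum_k c_k^2w_k^2<\infty\}$, which is a separable Hilbert space, isometric to $\ell^2$ via $\{w_k\}\mapsto\{c_kw_k\}$. The coordinate map $u\mapsto\{\hat u_k\}$ carries $H$ isometrically into $\ell^2_c$, so separability of $H$ is inherited from $\ell^2_c$. Combined with Theorem~\ref{product-prior-is-convex-ellp}, which already gives $u\in X$ a.s., this exhibits $\mu$ as a Radon measure supported on $H$.

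The hard part, and the step I would treat most carefully, is completeness of $(H,\|\cdot\|_H)$, equivalently that the coordinate map sends $H$ \emph{onto} a closed subspace of $\ell^2_c$. This is not automatic: a sequence in $\ell^2_c$ need not be the coefficient sequence of an element of $X$, i.e.\ need not lie in the synthesis space $W$ of \eqref{sequence-space}, so a $\|\cdot\|_H$-Cauchy sequence could converge only to something outside $X$. The resolution is to exploit the freedom in choosing $\{c_k\}$: one must pick the weights so that $\ell^2_c$ embeds continuously into $W$ (hence into $X$ via the isomorphism $T$), which forces $\|u\|_X\lesssim\|u\|_H$ on $H$ and keeps limits inside $X$. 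For a normalized unconditional basis this follows once $\ell^2_c\hookrightarrow\ell^1$, since $\sum_k|w_k|\le\|\{1/c_k\}\|_{\ell^2}\|\{w_k\}\|_{\ell^2_c}$ and $\|\sum_k w_kx_k\|_X\le\sum_k|w_k|$. Thus the real obstacle is reconciling the two demands on $\{c_k\}$: the weights must be large enough that $\ell^2_c$ embeds continuously (indeed compactly) in $X$, yet tempered enough that $\sum_k c_k^2\gamma_k^2\xi_k^2$ remains a.s.\ summable, and it is precisely the hypothesis that $\{\gamma_k\}$ decays sufficiently fast that guarantees a compatible choice of $\{c_k\}$ exists.
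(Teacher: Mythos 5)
Your treatment of the concentration statement is correct, and it is in fact a genuinely different and more elementary route than the paper's: where you compute
\[
\EE\Bigl[\sum_k c_k^2\gamma_k^2\xi_k^2\Bigr]=\EE[\xi_1^2]\sum_k c_k^2\gamma_k^2<\infty
\]
by Tonelli and conclude a.s.\ finiteness (needing only second moments of $\xi_1$, which Theorem~\ref{convex-measurable-exp} supplies), the paper centers $\zeta_k=|\xi_k|^2-\EE|\xi_k|^2$ and invokes Kolmogorov's two-series theorem, which needs fourth moments (again supplied by convexity). Both are valid; your version even shows bounded $\{c_k\}$ suffices, while the paper remarks only that $\{c_k\}\in\ell^8(\reals)$ would do. Note, however, that this concentration statement is the \emph{entire} content of the paper's proof: the paper never addresses separability or completeness of $(H,\langle\cdot,\cdot\rangle_H)$ at all.

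The gap is in your completeness step, and it is a genuine one. Your fix requires $\ell^2_c\hookrightarrow\ell^1$, i.e.\ $\{1/c_k\}\in\ell^2$, which forces $c_k\to\infty$. This is irreconcilable with the rest of the theorem. First, the statement explicitly allows $\{c_k\}\in\ell^2(\reals)$, for which $\sum_k c_k^{-2}=\infty$. Second, under the sole hypothesis $\{\gamma_k^2\}\in\ell^1(\reals)$ there need not exist \emph{any} sequence satisfying both $\sum_k c_k^{-2}<\infty$ and the summability $\sum_k c_k^2\gamma_k^2<\infty$ that your own concentration argument (indeed, essentially any argument, by Lemma~\ref{sum-of-positive-random-variables}) requires: take $\gamma_k=k^{-3/4}$, so that along powers $c_k=k^a$ the first condition forces $a>1/2$ while the second forces $a<1/4$. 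So your closing claim that the decay of $\{\gamma_k\}$ ``guarantees a compatible choice of $\{c_k\}$ exists'' is unsupported and false in general. To your credit, you have put your finger on a real imprecision in the theorem as stated: with decaying weights, $(H,\|\cdot\|_H)$ is in general only an incomplete inner-product space --- e.g.\ for $X=\ell^2$ and $\{c_k\}\in\ell^2$ the partial sums of the all-ones sequence are $\|\cdot\|_H$-Cauchy in $H$ yet have no limit in $X$. The honest repair is to weaken the conclusion (concentration on the pre-Hilbert space $H$, or on its completion), not to re-engineer $\{c_k\}$ in a way that contradicts the hypotheses and the concentration requirement simultaneously.
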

\begin{proof}
We only consider the case when $\{ c_k \} \in \ell^2(\reals)$. First,
by Corollary \ref{product-prior-is-convex-iid} we know that $\| u\|_X
<\infty$ a.s. and so we can work directly with the random sequence $\{
\hat{u}_k\} $ and the measure that
this sequence will induce on $W$. This measure will be equivalent to
$\mu \circ T$ and its support is isomorphic to the support of
$\mu$. 
Define the centered random variables $\zeta_k = |\xi_k|^2  -  \EE
|\xi_k|^2$ and write 
$$
\sum_{k=1}^\infty c_k^2 \gamma_k^2 \xi_k^2 = \sum_{k=1}^\infty c_k^2
\gamma_k^2 \zeta_k + \sum_{k=1}^\infty c_k^2 \gamma_k^2 \EE |\xi_k|^2.
$$
Since the
$\mu_k$ are convex they have bounded moments of all orders and so
we can
define the constant
$C = \VV \zeta_1 $.
 Using H\"{o}lder's inequality and the fact that $\ell^p(\reals)
 \subset \ell^q(\reals)$ for $1\le p < q < \infty$ 
we have
$$
\begin{aligned}
\sum_{k=1}^\infty \VV |c_k^2 \gamma_k^2| \zeta_k&=  C
\sum_{k=1}^\infty |c_k|^4 |\gamma_k|^4  \le C \| \{
|c_k|^4 |\gamma_k|^4 \}  \|_{\ell^1} 
\\ & \le C\| \{ |c_k|^4\} \|_{\ell^2} \| \{|\gamma_k|^4\}
\|_{\ell^2} = C\|  \{c_k\} \|_{\ell^8}^4 \| \{|\gamma_k|^2\}
\|_{\ell^4}^2 \\
& \le C\|  \{c_k\} \|_{\ell^8}^4 \| \{|\gamma_k|^2\}
\|_{\ell^1}^2 \\
& \le C\|  \{c_k\} \|_{\ell^2}^4 \| \{|\gamma_k|^2\}
\|_{\ell^1}^2 < \infty.
\end{aligned}
$$
By 
Kolmogorov's two series theorem
\cite[Lemma~3.16]{kallenberg} we have that $\sum_{k=1}^\infty c_k^2
\gamma_k^2 \zeta_k < \infty $ a.s.
Using a similar argument as above we can show that $\sum_{k=1}^\infty c_k^2
\gamma_k^2 \EE | \xi_k|^2 < \infty$ and so
$\sum_{k=1}^\infty c_k^2 \gamma_k^2 \xi_k^2 <\infty$ a.s. Observe
that our method of proof only requires $\{ c_k\} \in \ell^8(\reals) $
and the assumptions on $\{c_k\}$ can be relaxed.

\end{proof}

At the end of this section we note that the construction of the prior via
\eqref{product-prior-sample} can be generalized to the setting where
one starts from a Banach space $X$ and constructs a
measure that is supported on a larger space that contains $X$. For
example, we can start with $X = L^2(\mbb{T})$ and construct a measure
on a Sobolev space with a negative index.

 To this end, consider a random variable of the form
\begin{equation}
  \label{product-prior-sample-rough}
  u = \sum_{k=1}^\infty \xi_k x_k
\end{equation}
where $\{\xi_k \}$ are i.i.d.\  and centered convex random variables and
$\{ x_k \}$ is a normal basis in $X$. 
It is easy to see that
the samples no longer belong to $X$ almost surely. However, they
belong to a larger space $\tilde{X}$. Let $\tilde{W}$ be the space of
real valued sequences $\{ w_k \}$ so that
$$
\| \{ w  \} \|_{\tilde{W}} := 
\left\|\sum_{k=1}^\infty c_k w_kx_k \right\|_X < \infty.
 $$
Here we take $\{ c_{k} \} $ to be a fixed sequence that decays
sufficiently fast for $\sum_{k=1}^\infty c_k \xi_k$ to converge a.s. A
convenient choice would be $\{ c_k^2 \} \in \ell^1(\reals)$. Now define $\tilde{X}$ to be the image of $\tilde{W}$
under the synthesis map 
$$
T: \tilde{W} \to \tilde{X} \qquad T(\{ w \}) = \sum_{i=1}^\infty w_i x_i.
$$
It is straightforward to check
that the
measure $\mu$ that is induced by \eqref{product-prior-sample-rough} is
a convex measure on $\tilde{X}$ using similar arguments to the proof
of Theorem \ref{product-prior-is-convex-ellp}. It is also clear that
$X \subset \tilde{X}$.

\section{Practical considerations and examples}\label{sec:examples}
In this section we will
discuss certain problems that are particularly
interesting in applications. Throughout this section we consider the
additive noise model of Section \ref{sec:introduction}:
$$
y = \mcl{G}(u) + \eta, \qquad y \in Y, \quad u \in X, \quad \mcl{G} :
X \to Y
$$
where $Y = \reals^m$ for some integer $m$ and $\eta \sim \mcl{N}(0,
\pmb{\Gamma})$ where $\pmb{\Gamma}$ is a positive-definite matrix and
$X$ is a Banach space that satisfies Assumption \ref{assumption-on-X}.
We have already
seen that under these assumptions the likelihood potential has the
form 
\begin{equation}
  \label{gaussian-likelihood-2}
  \Phi(u;y) = \frac{1}{2} \left\| \pmb{\Gamma}^{-1/2} ( \mcl{G}(u) - y
    ) \right\|_2^2 =: \frac{1}{2} \left\|\mcl{G}(u) - y
     \right\|_{\pmb{\Gamma}}^2 .
\end{equation}
We can now reduce Assumption \ref{assumption-on-likelihood}
to a smaller set of assumptions on $\mcl{G}$ (resp. $\Phi$).
\myhl{Note that, with minor modifications, the  proof of the following theorem
is applicable to other types of additve noise models such as Laplace noise. 
}

\begin{theorem}\label{forward-model-assumptions}
Suppose that $X$ is a Banach space and assume that the operator
$\mcl{G}: X \to \reals^m$ satisfies the following conditions: 
\begin{enumerate}[(i)]
\item For an $\epsilon > 0$ there exists a constant $M = M(\epsilon) \in
  \reals$ such that $\forall u \in X$ 
$$
\| \mcl{G}(u)  \|_{\pmb{\Gamma}} \le \exp( \epsilon \| u\|_X + M ).
$$
\item For every $r > 0$ there is a constant $K = K(r) > 0$ such that
  for all $u_1, u_2 \in X$ with $\max\{ \| u_1 \|_X, \| u_2 \|_X \} <
  r$
$$
\| \mcl{G}(u_1) - \mcl{G}(u_2) \|_{\pmb{\Gamma}} \le K \| u_1 - u_2 \|_X.
$$
\end{enumerate}
Then the likelihood potential $\Phi$ given by
\eqref{gaussian-likelihood-2} satisfies the conditions of Assumption \ref{assumption-on-likelihood}.
\end{theorem}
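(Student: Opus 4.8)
The plan is to verify each of the four parts of Assumption \ref{assumption-on-likelihood} in turn, exploiting that $\Phi(u;y) = \tfrac12\|\mcl{G}(u)-y\|_{\pmb{\Gamma}}^2$ is a squared norm and is therefore amenable to the elementary factorization $a^2 - b^2 = (a-b)(a+b)$. Throughout I would use that, since $Y = \reals^m$ is finite dimensional, the weighted norm $\|\cdot\|_{\pmb{\Gamma}} = \|\pmb{\Gamma}^{-1/2}\,\cdot\,\|_2$ is equivalent to $\|\cdot\|_Y$, so that there is a constant $c>0$ with $\|y\|_{\pmb{\Gamma}} \le c\,\|y\|_Y$ for all $y \in \reals^m$; this lets me pass between the two norms freely.

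Part (i) is immediate and is also where the constant $\alpha_1$ gets fixed: because $\Phi$ is a squared norm we have $\Phi(u;y) \ge 0$ for every $u$ and $y$, so the lower bound holds with $\alpha_1 = 0$ and $M = 0$, independently of $r$. For part (ii), for $\max\{\|u\|_X, \|y\|_Y\} < r$ I would expand
$$\Phi(u;y) \le \tfrac12\bigl(\|\mcl{G}(u)\|_{\pmb{\Gamma}} + \|y\|_{\pmb{\Gamma}}\bigr)^2,$$
and then bound $\|\mcl{G}(u)\|_{\pmb{\Gamma}} \le \exp(\epsilon\|u\|_X + M) \le \exp(\epsilon r + M)$ using hypothesis (i), while $\|y\|_{\pmb{\Gamma}} \le c r$ by the norm equivalence; the right-hand side is then a finite constant $K(r)$.

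Parts (iii) and (iv) are the two places where the factorization does the work. For part (iii), writing $a = \|\mcl{G}(u_1)-y\|_{\pmb{\Gamma}}$ and $b = \|\mcl{G}(u_2)-y\|_{\pmb{\Gamma}}$, I would use $|\Phi(u_1;y)-\Phi(u_2;y)| = \tfrac12|a-b|(a+b)$. The reverse triangle inequality gives $|a-b| \le \|\mcl{G}(u_1)-\mcl{G}(u_2)\|_{\pmb{\Gamma}} \le K(r)\|u_1-u_2\|_X$ by hypothesis (ii), while $a+b$ is bounded by a constant depending on $r$ via hypothesis (i) and the norm equivalence, exactly as in part (ii); combining these yields the Lipschitz bound with constant $L(r)$. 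Part (iv) is structurally identical, now with $a = \|\mcl{G}(u)-y_1\|_{\pmb{\Gamma}}$ and $b = \|\mcl{G}(u)-y_2\|_{\pmb{\Gamma}}$, so that $|a-b| \le \|y_1-y_2\|_{\pmb{\Gamma}} \le c\|y_1-y_2\|_Y$.

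The one genuine subtlety, and the step I expect to be the main obstacle, occurs in part (iv): there $u$ ranges over all of $X$ with no a priori norm bound, so the factor $a+b$ cannot be controlled by a constant and must instead be allowed to grow exponentially in $\|u\|_X$. Here I would estimate $a+b \le 2\exp(\epsilon\|u\|_X + M) + 2cr$ using hypothesis (i), and then absorb the additive constant into the exponential by noting $\exp(\epsilon\|u\|_X + M) \ge \exp(M)$, so that $a+b \le \exp(\epsilon\|u\|_X + C')$ for a suitable $C' = C'(r,M,c)$. This produces precisely the form required by Assumption \ref{assumption-on-likelihood}(iv), with $\alpha_2 = \epsilon$ matching the exponential growth rate of $\mcl{G}$ in hypothesis (i). It is worth recording that the theorem thereby yields $\alpha_1 = 0$ and $\alpha_2 = \epsilon$, which is exactly the information that the well-posedness results of Section \ref{sec:well-posedness} ultimately consume.
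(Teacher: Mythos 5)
Your proof is correct and follows essentially the same route as the paper's: verify the four conditions of Assumption \ref{assumption-on-likelihood} one by one, taking $\alpha_1 = 0$, $M = 0$ from nonnegativity of the quadratic potential, and controlling differences of potentials via the factorization $a^2 - b^2 = (a+b)(a-b)$ combined with the boundedness and Lipschitz hypotheses on $\mcl{G}$. You in fact supply more detail than the paper for condition (iv), which it dismisses as ``a similar argument'': your handling of the unbounded $u$ by absorbing the additive constant into the exponential, yielding $\alpha_2 = \epsilon$, is exactly the missing step and is carried out correctly.
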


\begin{proof}
  We will go through the conditions of Assumption \ref{assumption-on-likelihood} one by one. First,
  it is clear that by taking $\alpha_1 = 0$ and $M(r) = 0$
  Assumption \ref{assumption-on-likelihood}(i) is satisfied. Now fix an $r>0$ and consider $u \in
  X$ and $y \in Y$ so that $\max\{ \| u \|_X, \| y \|_Y \} < r$. It
  then follows from condition (i) on $\mcl{G}$ that for 
  $\epsilon > 0$ 
$$
\Phi(u;y) \lesssim  \| y \|_{\pmb{\Gamma}}^2 + \| \mcl{G}(u)
\|_{\pmb{\Gamma}}^2 + \| y \|_{\pmb{\Gamma}} \| \mcl{G}(u)
\|_{\pmb{\Gamma}}  \lesssim r^2 + \exp( 2 \epsilon r + M(\epsilon))  + r
\exp(\epsilon r + M(\epsilon)) = K(r)
$$
which gives Assumption \ref{assumption-on-likelihood}(ii). Once again fix $r > 0$ and consider
$u_1, u_2 \in X$ and $y \in Y$ so that $\max\{ \|u_1 \|_X, \| u_2
\|_X, \|y \|_Y \} < r$. Then, using conditions (i) and (ii) on $\mcl{G}$, it is easy to verify that
$$
\begin{aligned}
| \Phi(u_1, y) - \Phi(u_2, y) | & \le \left|  \| \mcl{G}(u_1)
\|_{\pmb{\Gamma}}^2  - \| \mcl{G}(u_2) \|_{\pmb{\Gamma}}^2 + 2 \| y
\|_{\pmb{\Gamma}}^2 \| \mcl{G}(u_1) - \mcl{G}(u_2) \|_{\pmb{\Gamma}}
\right| \\ &\le K(r) \| \mcl{G}(u_1) - \mcl{G}(u_2) \|_{\pmb{\Gamma}}.
\end{aligned}
$$
Finally, Assumption \ref{assumption-on-likelihood}(iv) follows from a similar argument using both
conditions on $\mcl{G}$.
\end{proof}\\
\begin{remark}
~If condition (i) of Theorem \ref{forward-model-assumptions}\
is satisfied for any $\epsilon > 0$ then $\Phi$ satisfies Assumption
\ref{assumption-on-likelihood}(iv) for any constant $\alpha_2 > 0$.
In particular this is true if $\| \mcl{G} \|_{\pmb{\Gamma}}$ is polynomially bounded in $\| u
  \|_X$.
\end{remark}
\begin{remark}
  ~If $\mcl{G}$ is a bounded linear operator from $X$ to $\reals^m$ then it satisfies the 
 conditions of Theorem \ref{forward-model-assumptions} for any constant $\epsilon > 0$.
\label{bounded-linear-operator-assumption}
\end{remark}

\subsection{Consistent discretization by projection}
In Section \ref{sec:well-posedness} we mentioned that  an important aspect of solving a
Bayesian inverse problem is to have a consistent approximation. In practice we often use a numerical algorithm
in order to extract interesting statistics from
the posterior (be it sampling or optimization).
Since we cannot compute the solution of the infinite dimensional problem we need
to discretize it. In the case of the product priors of Section
\ref{construction-of-convex-priors} the most convenient method of discretization is to truncate
the spectral expansion of the samples.

Suppose that $X$
is a Banach space that satisfies Assumption
\ref{assumption-on-X}, i.e. it has a unconditional normalized basis $\{ x_k
\}$. Furthermore, let $( X_N, \| \cdot \|_{X} )$
for $N =1,2, \cdots$ be a
sequence of finite dimensional linear subspaces of $X$ (not
necessarily nested) where each $X_N$ is simply the span of a finite
number of the $x_k$. 
Then for each $N$ the space can be factored as $X = X_N \bigoplus
X_N^{\bot}$ i.e. every element $u \in X$ can be written as 
$u = u_N + u_N^\bot$ where $u_N \in X_N$ and $u_N^\bot \in X_N^\bot :=
(X \setminus X_N)$. Now suppose that there are projection
operators $P_N: X \to X_N$ so that 
$$
P_N u = u_N \qquad \text{and} \qquad \| u - P_Nu \|_X \le \psi(N) \to 0 \qquad as\qquad  N \to \infty \qquad \forall u \in X.
 $$
A simple example of this setting is to let $X_N = \text{span} \{ x_k
\}_{k=1}^N$ and take $P_Nu = \sum_{k=1}^N c_k(u) x_k$ where
$\{c_k(u)\}$ are the basis coefficients of $u \in X$. \myhl{If $X$ is a Hilbert space with an orthonormal basis then one can
take $P_N$ to be the usual Galerkin projection.} 

Now consider the Bayesian
inverse problem given by \eqref{bayes-rule} with a
likelihood potential $\Phi$ that satisfies the conditions of
Assumption \ref{assumption-on-likelihood}.
 Suppose that
$\mu_0$ is a convex prior measure that satisfies
Assumption \ref{assumption-on-prior} with a sufficiently large
constant 
$\kappa$. In addition, suppose that for each $X_N$
the prior can be factored as
$$
\mu_0 = \mu_N \otimes \mu_N^{\bot} 
$$
where $\mu_N$ is a measure on $X_N$ and $\mu_N^{\bot}$ is a
measure on $X_N^{\bot}$. An example of such a prior $\mu_0$ is our
construction of the product convex measures in Section
\ref{construction-of-convex-priors}. Now consider the collection of
approximate posterior
measures $\mu_N^y$ given by 
$$
\frac{\dd \mu_N^y(u)}{ \dd \mu_0(u)}  = \frac{1}{Z_N(y)} \exp( - \Phi(
P_Nu)) \qquad \text{where} \qquad Z_N(y) = \int_X \exp( - \Phi(
P_Nu)) \dd \mu_0(u).
$$
Observe that for every set $A \in \mcl{B}(X)$, using Fubini's theorem \cite[Theorem~3.4.1]{bogachev2}
we have
$$
\begin{aligned}
\mu^y_N(A) &= \int_{A} \frac{1}{Z_N(y)} \exp( - \Phi( P_N u;y) \dd
\mu_N^\bot \otimes \dd\mu_N(u)  \\  & = 
 \int_{X_N}\int_{A_{v}} \frac{1}{Z_N(y)} \exp( -\Phi( v;y ) )\dd
 \mu_N^\bot(w)  \dd \mu_N(v) \\ & = 
 \int_{X_N}  \mu_N^\bot(A_v) \left(\frac{1}{Z_N} \exp( -\Phi( v;y ) )\right)
 \dd \mu_N(v)
\end{aligned}
$$
Where $A_v = \{ w \in X_N^\bot : v + w \in A \}$. 
This implies that the posterior measure $\mu_N^y$ can be factored as 
$\mu^y_N = \tilde{\mu}^y_N \otimes \mu_N^\bot$ where 
$$
\frac{\dd \tilde{\mu}_N^y}{\dd
  \mu_N} = \frac{1}{Z_N(y)} \exp\left( - \Phi( v; y) \right).
$$
Thus, the posterior measure inherits the product structure of the
prior and it only differs from the prior on the subspace $X_N$.

It is straightforward to check that the measures $\mu_N^y $ are
well-defined and absolutely continuous with respect to the prior
following our assumptions on $\Phi$ and $\mu_0$. However, it remains
for us to show that the measures will converge to the posterior
$\mu^y$ in an appropriate sense.

\begin{lemma}\label{convergence-of-discrete-likelihood}
 Consider the above setting where the posterior and the prior have the
 prescribed 
 product structures and the $X_N$ are linear subpaces of $X$. Suppose
 that $\Phi$ is given by \eqref{gaussian-likelihood}
 and $\Phi_N(u;y) := \Phi(P_N u; y)$. Furthermore, suppose that for
 any $\epsilon > 0$ there exist constants $K(\epsilon) > 0$ and
 $M(\epsilon) \in \reals$ so that 
 \begin{enumerate}[(i)]
 \item $ \forall u \in X$, $ \| \mcl{G}(u) \|_2 \le \exp( \epsilon
   \| u\|_X + M).$
\item $
\| \mcl{G}(u_1) - \mcl{G}(u_2) \|_{2} \le  K \exp( \epsilon
\max\{ \| u_1 \|_X, \| u_2 \|_X \} ) \| u_1 - u_2 \|_X.
$
 \end{enumerate}
Then there exists a constant $C(\epsilon) > 0$ such that
$$
| \Phi(u;y) - \Phi_N(u;y) | \le C \exp( \epsilon \| u \|_X ) \| u -
P_N u \|_X.
 $$
\end{lemma}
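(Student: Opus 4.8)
The plan is to expand the difference of the two quadratic potentials by polarization and estimate each resulting factor separately. Recalling that $\Phi(u;y) = \tfrac{1}{2}\|\mcl{G}(u)-y\|_{\pmb{\Gamma}}^2$ and $\Phi_N(u;y) = \Phi(P_Nu;y) = \tfrac{1}{2}\|\mcl{G}(P_Nu)-y\|_{\pmb{\Gamma}}^2$, I would set $A := \mcl{G}(u)-y$ and $B := \mcl{G}(P_Nu)-y$ and use the identity $\|A\|_{\pmb{\Gamma}}^2 - \|B\|_{\pmb{\Gamma}}^2 = \langle A-B, A+B\rangle_{\pmb{\Gamma}}$ for the weighted inner product $\langle v,w\rangle_{\pmb{\Gamma}} := \langle \pmb{\Gamma}^{-1/2}v, \pmb{\Gamma}^{-1/2}w\rangle$. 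Cauchy--Schwarz then yields
$$
| \Phi(u;y) - \Phi_N(u;y) | \le \tfrac{1}{2} \| A - B \|_{\pmb{\Gamma}} \, \| A + B \|_{\pmb{\Gamma}}.
$$
Since $\pmb{\Gamma}$ is positive definite, $\|\cdot\|_{\pmb{\Gamma}}$ and $\|\cdot\|_2$ are equivalent, so hypotheses (i) and (ii) may be applied with $\|\cdot\|_{\pmb{\Gamma}}$ up to a fixed multiplicative constant.

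For the first factor I would note that $A-B = \mcl{G}(u)-\mcl{G}(P_Nu)$ and bound it directly by the local Lipschitz hypothesis (ii). For the second factor I would use the triangle inequality, $\|A+B\|_{\pmb{\Gamma}} \le \|\mcl{G}(u)\|_{\pmb{\Gamma}} + \|\mcl{G}(P_Nu)\|_{\pmb{\Gamma}} + 2\|y\|_{\pmb{\Gamma}}$, and invoke the growth hypothesis (i) on each of the two $\mcl{G}$ terms. As $y$ is fixed, $\|y\|_{\pmb{\Gamma}}$ is an absolute constant, which can be absorbed into the exponential prefactor using $c \le c\,\exp(\epsilon\|u\|_X)$.

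A key structural point is the control of $\|P_Nu\|_X$, which enters the exponents arising from (i) and (ii) evaluated at the argument $P_Nu$, as well as the $\max$ appearing inside (ii). Because $\{x_k\}$ is an unconditional (hence Schauder) basis, the coordinate projections are uniformly bounded, $\sup_N \|P_N\|_{X\to X} =: C_P < \infty$, so that $\max\{\|u\|_X, \|P_Nu\|_X\} \le C_P\|u\|_X$. Each exponential factor $\exp(\epsilon'\,\cdot)$ is thereby controlled by $\exp(\epsilon' C_P\|u\|_X)$.

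Finally I would collect the estimates: the product of the Lipschitz factor $\exp(\epsilon' C_P\|u\|_X)$ and the growth factor $\exp(\epsilon' C_P\|u\|_X + M')$ contributes a total exponent of $2\epsilon' C_P\|u\|_X$. The main (though minor) subtlety is this exponential bookkeeping: to land on the prescribed rate $\exp(\epsilon\|u\|_X)$, I would apply (i) and (ii) at the reduced rate $\epsilon' := \epsilon/(2C_P)$, which is legitimate precisely because those hypotheses are assumed for \emph{every} positive rate. Absorbing the additive constants and the $y$-dependent term into a single $C = C(\epsilon)$ then gives $|\Phi(u;y)-\Phi_N(u;y)| \le C\exp(\epsilon\|u\|_X)\|u-P_Nu\|_X$. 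I expect that the only genuinely delicate step, as opposed to routine estimation, is the uniform boundedness of the projections together with the accompanying rescaling of $\epsilon$; the remainder is the difference-of-squares identity followed by the triangle and Cauchy--Schwarz inequalities.
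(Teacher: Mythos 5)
Your proposal is correct and follows essentially the same route as the paper's proof: the difference-of-squares identity with Cauchy--Schwarz, the triangle inequality on the sum factor, hypothesis (i) for the growth of $\mcl{G}(u)$ and $\mcl{G}(P_Nu)$, and hypothesis (ii) for the difference. In fact you are more careful than the paper on two points it leaves implicit --- the uniform bound $\|P_Nu\|_X \le C_P\|u\|_X$ coming from unconditionality of the basis, and the rescaling of $\epsilon$ needed so the product of exponential factors lands on the stated rate $\exp(\epsilon\|u\|_X)$.
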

\begin{proof}
Let
  $\mcl{G}_N(u) := \mcl{G}(P_Nu)$ and
  fix an $\epsilon > 0$. Then using the elementary identity $(a^2 -
  b^2) = (a+b)(a-b)$, the triangle inequality and the fact that the
  data $y$ is a fixed vector, we have:
$$
\begin{aligned}
| \Phi(u;y) - \Phi_N(u;y) | &\le \frac{1}{2} \| 2y - \mcl{G}(u) -
\mcl{G}_N(u) \|_{\pmb{\Gamma}}\: \| \mcl{G}(u) - \mcl{G}_N(u)
\|_{\pmb{\Gamma}} \\  
& \le
\frac{1}{2} (\| 2y \|_{\pmb{\Gamma}}  + \|   \mcl{G}(u)
\|_{\pmb{\Gamma}}  + \|
\mcl{G}(P_N u) \|_{\pmb{\Gamma}} )\: \| \mcl{G}(u) - \mcl{G}_N(u)
\|_{\pmb{\Gamma}} \\
& \le \frac{1}{2} (\| 2y \|_{\pmb{\Gamma}}  + C \exp(\epsilon \| u
\|_X + M(\epsilon) )\: \| \mcl{G}(u) - \mcl{G}_N(u)
\|_{\pmb{\Gamma}} \\
& \le C(\epsilon) \exp\left( \epsilon \| u \|_X \right) \| u - P_N u \|_X
\end{aligned}
$$
\end{proof}
\begin{corollary}\label{convergence-of-posterior-with-projection}
Consider the setting of Lemma \ref{convergence-of-discrete-likelihood}
 above and let $\psi(N)$ be a
function such that $\psi(N) \to 0$ as $N \to \infty$.
If  for every $\epsilon
  \ge 0$ there exists a constant $M(\epsilon) \in \reals$ so that 
$$
\| u - P_N u \|_X \le \exp( \epsilon \| u \|_X  + M ) \psi(N) 
$$
then there exists a constant $D$ independent of $N$ so that
$$d_{H}(\mu^y , \mu^y_N) \le D \psi(N).$$
In other words, the error in the Hellinger metric decays at the same rate as the approximation error $\| u - P_Nu\|_X$
as $N \to \infty$.
\end{corollary}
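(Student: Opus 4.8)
The plan is to reduce the corollary to a single application of Theorem \ref{stability}, whose conclusion is precisely a bound of the form $d_H(\mu^y,\mu^y_N) \le D\,\psi(N)$ with $D$ independent of $N$. It therefore suffices to verify the three hypotheses of that theorem for $\Phi$ and $\Phi_N := \Phi(P_N\,\cdot\,;y)$: that both satisfy Assumptions \ref{assumption-on-likelihood}(i) and (ii) with a common constant $\alpha_1$ uniformly in $N$, that the discretization estimate \eqref{discretization-assumption} holds for some exponent $\alpha_3 \ge 0$, and that $\kappa \ge \alpha_1 + 2\alpha_3$.

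First I would produce the discretization estimate by chaining Lemma \ref{convergence-of-discrete-likelihood} with the new hypothesis on $\|u - P_N u\|_X$. The lemma already gives, for every $\epsilon > 0$, a constant $C(\epsilon)$ with $|\Phi(u;y) - \Phi_N(u;y)| \le C\exp(\epsilon\|u\|_X)\,\|u - P_N u\|_X$, and inserting $\|u - P_N u\|_X \le \exp(\epsilon\|u\|_X + M)\psi(N)$ and merging the exponentials yields
$$
|\Phi(u;y) - \Phi_N(u;y)| \le C\exp\!\left(2\epsilon\|u\|_X + M\right)\psi(N).
$$
This is exactly \eqref{discretization-assumption} with $\alpha_3 = 2\epsilon$, the constant there absorbing $\log C$ and $M$, and $\psi(N)\to 0$ as required.

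Next I would check the remaining likelihood hypotheses. Since $\Phi$ is the Gaussian likelihood \eqref{gaussian-likelihood}, Theorem \ref{forward-model-assumptions} gives Assumption \ref{assumption-on-likelihood}(i) and (ii), and in fact $\alpha_1 = 0$ because $\Phi \ge 0$ (likewise $\Phi_N \ge 0$). To obtain these bounds for $\Phi_N$ uniformly in $N$ I would use that an unconditional normalized basis, as guaranteed by Assumption \ref{assumption-on-X}, has uniformly bounded coordinate projections, so $\|P_N u\|_X \le c\|u\|_X$ with $c$ independent of $N$; feeding this into condition (i) of Lemma \ref{convergence-of-discrete-likelihood} on $\mcl{G}$ controls $\|\mcl{G}(P_N u)\|_{\pmb{\Gamma}}$ and hence bounds $\Phi_N$ uniformly on bounded sets. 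With $\alpha_1 = 0$ the tail condition reads $\kappa \ge 2\alpha_3 = 4\epsilon$; since the estimate above is valid for \emph{every} $\epsilon > 0$ and $\kappa > 0$ is guaranteed by the convexity of $\mu_0$ through Theorem \ref{convex-measurable-exp}, I would simply fix $\epsilon \le \kappa/4$. Theorem \ref{stability} then delivers $d_H(\mu^y,\mu^y_N) \le D\,\psi(N)$.

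I do not anticipate a genuine obstacle, as the argument is largely bookkeeping. The one point requiring care is that the freedom to take $\epsilon$ arbitrarily small is exactly what makes the tail condition $\kappa \ge \alpha_1 + 2\alpha_3$ attainable, and this hinges on the uniform boundedness of the projections $P_N$ so that the exponent in the bound for $\Phi_N$ does not degenerate as $N$ grows. Absent the unconditionality and normalization in Assumption \ref{assumption-on-X}, controlling $\|P_N u\|_X$ uniformly would be the delicate step.
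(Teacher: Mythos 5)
Your proof is correct and follows essentially the same route as the paper's: chain Lemma \ref{convergence-of-discrete-likelihood} with the hypothesis on $\| u - P_N u\|_X$ to obtain the discretization estimate \eqref{discretization-assumption}, then invoke Theorem \ref{stability}. The paper's own proof is terser (it simply cites the lemma and the theorem after noting well-definedness via Theorems \ref{forward-model-assumptions} and \ref{existence-uniqueness}); your explicit verification of the uniform bounds for $\Phi_N$ via the unconditional basis and the choice $\epsilon \le \kappa/4$ fills in bookkeeping the paper leaves implicit.
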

\begin{proof}
  Note that our Assumption on continuity of $\mcl{G}$ in Lemma \ref{convergence-of-discrete-likelihood} is
stronger than the conditions of Theorem
\ref{forward-model-assumptions}. Then it follows from Theorem \ref{existence-uniqueness} that the posterior
$\mu^y$ and the approximations $\mu^y_N$ are well-defined. Finally, the
desired result follows directly from Lemma
\ref{convergence-of-discrete-likelihood} and Theorem \ref{stability}.

\end{proof} \\ 
\subsection{Example 2: Deconvolution with a Laplace prior}
We now return to Example 2 where we considered the deconvolution of an
image on the circle. Recall that $X =
L^2(\mathbb{T})$ and the data $y$ was generated by 
considering a fixed function $v \in L^2(\mathbb{T})$ and letting $y =
\mcl{G}(v) + \eta$ where $\eta \sim \mcl{N}(0, \sigma^2 \mb{I})$ and  the forward map is given by 
$$
\mcl{G}: L^2(\mbb{T}) \to \reals^m, \qquad \mcl{G}(u) :=  S( g \ast u).
$$
Here $g$ is a $C^\infty(\mbb{T})$ kernel and $S$ is a bounded linear
operator that collects point values of the convolved image.
Since the measurement noise is additive and Gaussian then
the likelihood potential $\Phi$ is quadratic
\eqref{deconvolution-likelihood}.
The prior measure in \eqref{deconvolution-prior} has the form
$$
u = \sum_{k \in \mbb{Z}} \gamma_k \xi_k \psi_k
$$
where $\{\xi_k\}$ are i.i.d.\  $\text{Lap}(0,1)$ random variables and $\gamma_k
= (1 + |k|^2)^{-5/4}$. Thus, $\{ \gamma_k^2 \} \in
\ell^1(\reals)$. Furthermore, we took
$\psi_k(x) = (2\pi)^{-1/2}e^{-2\pi i k x}$ which form an orthogonal basis in $L^2(\mbb{T})$ and
so by Corollary
\ref{product-prior-is-convex-iid} the random variable $u \in
L^2(\mbb{T})$ a.s. and $\mu_0$ is a convex measure.

Next, it follows from Young's inequality \cite[Thm.~13.8]{heil-basis} that $(g \ast \cdot):L^2(\mathbb{T}) \to L^2(\mathbb{T})$ is a bounded linear 
operator and furthermore, $(g \ast u) \in
C^\infty(\mathbb{T})$ for all $u \in
L^2(\mathbb{T})$.
Since pointwise evaluation is a bounded linear functional on
$C^\infty(\mathbb{T})$ then the forward map $\mcl{G}:
L^2(\mathbb{T}) \to \reals^m$ is a bounded linear operator. Therefore,
by Remark \ref{bounded-linear-operator-assumption} and Theorem
\ref{forward-model-assumptions} we know that the likelihood potential
$\Phi$
satisfies
Assumption \ref{assumption-on-likelihood}. Putting this together with
Theorem \ref{convex-bayesian-inverseproblems} implies that this
deconvolution problem is well-posed.

Now we will consider a consistent approximation of this problem.
Given $w \in L^2(\mathbb{T})$, we let $\{ \hat{w}_k\}_{k \in \mathbb{Z}}$ denote 
the Fourier modes of $w$ i.e.
$$
\hat{w}_k = (2 \pi)^{-1/2} \int_0^1 w(x) \:e^{-2\pi i k x} dx, \qquad \forall w
\in L^2(\mathbb{T}) \quad \text{and} \quad k \in \mathbb{Z}.
$$
We also define the Sobolev space $H^2(\mathbb{T})$
as 
$$
H^2(\mathbb{T}) := \left\{w  \in L^2(\mathbb{T}), \qquad \|w\|_{H^2}:=\sum_{k\in \mathbb{Z}}
( 1+ | k|^2)^2 | \hat{w}_k|^2 < \infty \right\}.
$$
Since the prior samples $u \in L^2(\mbb{T})$ a.s.
then we can consider their Fourier modes $\hat{u}_k = \gamma_k
\xi_k$. We can show that in fact such a sample $u\in H^2(\mathbb{T})$ a.s., and are therefore in $C^1(\mathbb{T})$. To this end, note that
by substituting the values of the $\gamma_k$ we can write
$$\|u\|_{H^2}:=
\sum_{k \in \mbb{Z}} ( 1 + | k|^2 )^2  \frac{1}{(1 + |k|^2)^{5/2}}
|\xi_k|^2 = \sum_{k \in \mbb{Z}} ( 1 + | k|^2 )^{-1/2} 
|\xi_k|^2.
$$
This sum will converge almost surely following Kolmogorov's two series
theorem \cite[Lemma~3.16]{kallenberg} and dominated convergence and so 
$u \in H^2(\mathbb{T})$
a.s.  This means that the samples can be
approximated by truncation of their Fourier expansions.

Suppose that we discretize the prior by truncating the series in
\eqref{deconvolution-prior} and define the projections
$$
P_N u = \sum_{k=-N}^{N-1} \gamma_k \xi_k \psi_k.
$$
Since $u \in H^2(\mbb{T})$ a.s. then
\begin{equation}\label{deconvolution-discretization-convergence-rate}
\| u - P_N u \|_{L^2(\mbb{T})} \lesssim \| u\|_{L^2(\mbb{T})} \frac{1}{N^2}.
\end{equation}
Now let $\mu^y_N$ denote an
approximation to the posterior that is obtained by defining
$\mcl{G}_N(u) := \mcl{G}(P_Nu)$. Since $\mcl{G}$ is a bounded linear
operator then
it satisfies the conditions of
Lemma~\ref{convergence-of-discrete-likelihood}. Furthermore, the
likelihood is quadratic and so by
Corollary~\ref{convergence-of-posterior-with-projection} 
and \eqref{deconvolution-discretization-convergence-rate} we have that 
$$
d_{H}(\mu^y, \mu^y_N) \lesssim \frac{1}{N^2}.
$$

\subsection{Example 3: Deconvolution with a hierarchical prior} \myhl{We now
return to Example 3 that was first introduced in Section \ref{sec:introduction}. 
Recall that the problem is in an identical setting as in Example 2
above with the exception that the prior measure is now induced by the
random variable }
\begin{equation}
\label{deconvolution-hierarchical-prior}
u = \sum_{k \in \mbb{Z}} \gamma_k \zeta_k \xi_k \psi_k
\end{equation}
where $\gamma_k = (1+ |k|^2)^{-1}$ and the $\psi_k$ are the
Fourier basis functions on $L^2(\mbb{T})$. Here, $\{ \zeta_k \}$ and
$\{ \xi_k \}$ are two sequences of i.i.d.\  random variables so that
$\zeta_1 \sim \text{Gamma}(2,1)$ and $\xi_1 \sim \mcl{N}(0,1)$.
The main difficulty
in the study of this problem is the fact that even though $\xi_k$ and
$\zeta_k$ have convex distributions their product may not be
convex. To get around this issue and prove the well-posedness of the
problem we shall cast it directly on the sequence space and
work with a nonlinear forward map.

First, note that $\EE \zeta_k\xi_k = \EE \zeta_k \EE \xi_k =0$ and 
$\VV \zeta_k\xi_k = (\VV \zeta_k)(\VV \xi_k)  +
(\VV\zeta_k) \EE \xi_k + (\VV  \xi_k) \EE \zeta_k < \infty$. We can use the same technique as in the proof of Theorem
\ref{product-prior-is-convex-ellp}(i) to show that $u \in
L^2(\mbb{T})$ a.s. Furthermore, the random sequences $\{
\sqrt{\gamma}_k \zeta_k\}$ and $\{ \sqrt{\gamma}_k \xi_k \}$ belong to
the sequence space $\ell^1$ a.s. and so they belong to $\ell^2$ as
well. We now consider the product space $\ell^2 \otimes \ell^2 =\{ 
\{ c_k, b_k \} : \{ c_k\} \in \ell^2, \{ b_k \} \in \ell^2 \}$
equipped with the norm $\| \{ c_k, b_k \} \|_{\ell^2 \otimes \ell^2} := \max \{ \| \{c_k\}
\|_{\ell^2}, \| \{ b_k \} \|_{\ell^2} \}  $ along with the probability
measure $\mu_0$ that is induced by $\{
\sqrt{\gamma}_k \zeta_k, \sqrt{\gamma}_k \xi_k \}$ on the product
space. We will take $\mu_0$ to be our prior.
 Note that $\mu_0$ can be obtained from the product of two convex
measures on $\ell^2$ and so it is a convex measure by Corollary
\ref{convexity-preservation}(ii).

Define the operator 
$$
Q : \ell^2 \otimes \ell^2 \to L^2(\mbb{T}) \qquad 
Q( \{ c_k, b_k \} ) = \sum_{k\in \mbb{Z}} c_k b_k \psi_k. 
$$
Using the triangle and H\"{o}lder's inequalities we can easily check that
\begin{equation*}
\|Q( \{ c_k, b_k \} ) \|_{L^2(\mbb{T})} \le \|  \{ c_k, b_k \}
\|_{\ell^2 \otimes \ell^2}^2
\end{equation*}
This bound together
with the fact that $\mcl{G}: L^2(\mbb{T}) \to \reals^m$ is a bounded
linear operator implies that 
\begin{equation*}
\| \mcl{G} \circ Q(\{ c_k, b_k \})  \|_2 \le C \| \{ c_k, b_k \}
\|_{\ell^2 \otimes \ell^2}^2.
\end{equation*}
Since $\|\mcl{G}\circ Q\|_2$ is bounded by a quadratic function of $\|
\{ c_k, b_k\} \|_{\ell^2 \otimes \ell^2}$ 
 then for every $\epsilon >0$ we
can always find $M(\epsilon) >0$ so that 
\begin{equation}\label{Q-is-exponentially-bounded}
\| \mcl{G} \circ Q(\{ c_k, b_k \} )  \|_2 \le C \exp( \epsilon \| \{ c_k,
b_k \} \|_{\ell^2 \otimes \ell^2} + M(\epsilon)). 
\end{equation}
Now consider $\{ c_k, b_k\}$ and $\{ g_k, f_k \}$ in $\ell^2 \otimes \ell^2$. Using Parseval's identity we can
write 
$$
\begin{aligned}
\| Q( \{ c_k, b_k\}) - Q(\{ g_k, f_k\} ) \|_{L^2(\mbb{T})}  &=  \left( \sum_{k
  \in \mbb{Z}} | c_kb_k - g_kf_k |^2 \right)^{1/2} \\
&= \left(\sum_{k\in \mbb{Z}} | c_k (b_k - f_k) + f_k (c_k -
g_k)|^2 \right)^{1/2}\\
& \le C \max( \| \{c_k, b_k \} \|_{\ell^2 \otimes \ell^2}, \| \{g_k, f_k \} \|_{\ell^2 \otimes \ell^2})
\| \{ c_k - g_k, b_k-f_k \} \|_{\ell^2 \otimes \ell^2}.
\end{aligned}
$$ 
Putting this result together with the fact that $\mcl{G} :
L^2(\mbb{T}) \to \reals^m$ is a bounded linear operator implies that
 for any $r > 0$ such that $\max( \| \{c_k, b_k \} \|_{\ell^2 \otimes \ell^2},
\| \{g_k, f_k \} \|_{\ell^2 \otimes \ell^2}) < r$ there is a constant $K(r)> 0$ so that
\begin{equation}
  \label{Q-is-Lipschitz}
  \| \mcl{G} \circ Q( \{ c_k, b_k\}) - \mcl{G} \circ Q(\{g_k, f_k\}) \|_2 \le
  K(r) \| \{ c_k - g_k , b_k- f_k \} \|_{\ell^2 \otimes \ell^2}.
\end{equation}
This bound along with \eqref{Q-is-exponentially-bounded} and
Theorem \ref{forward-model-assumptions} implies that the underlying
likelihood potential \myhl{$\Phi = \frac{1}{2 \sigma^2} \| \mcl{G}(u) - y\|_2^2$} satisfies the conditions of Assumption
\ref{assumption-on-likelihood} for any choice of $\alpha_1 , \alpha_2
> 0$. Since the prior measure
$\mu_0$ is convex
 this inverse
problem is well posed by Corollary \ref{convex-bayesian-inverseproblems-poly-bounded}.
\subsection{Example 5: Source inversion in atmospheric dispersion}
In this example we consider the problem of estimating the source term
in a parabolic PDE from linear measurements of the solution. This
problem is closely related to the inverse problem of estimating the
sources of emissions in an atmospheric dispersion model
\cite{hosseini-lead-inverse} and we shall present this example in that
context. Let $D\subset \reals^3$ be a smooth and
connected domain and define $\Omega
:= D \times (0, T]$ for some constant $T > 0$. Now consider the PDE 
\begin{equation}
  \label{parabolic-PDE-expanded}
\left\{
  \begin{aligned}
   & \partial_t c - g(x,t) c - \sum_{i=1}^3 a_i(x,t) \partial_i c -
   \sum_{i,j=1}^3 b_{ij} \partial_{ij} c  = u,
  \qquad &&\text{in} \qquad D \times (0,T)  \\
  &c(x,t) = 0  \qquad &&\text{on} \qquad \partial D \times(0,T),\\ 
& c(x,0) = 0.
  \end{aligned} \right.
\end{equation}
where $\partial_ic$ is the shorthand notation for $\frac{\partial
  c}{\partial x_i}$ and $\partial_{ij} c = \partial_i \partial_j c$.
In the context of atmospheric
dispersion modelling $c(x,t)$ is the pollutant concentration, $u(x,t)$ is
the source term and $g, a_i, b_{ij}$ coefficients are used to model the wind
velocity field and the eddy diffusivity coefficients \cite{hosseini-dispersion,
  seinfeld}.
To this end, we have the following result concerning the existence
and uniqueness of the solution to \eqref{parabolic-PDE-expanded} (see
\cite[Section~7.1]{Evans} or \cite[Theorem~11.3 and Example~11.5]{Renardy} for a proof).
\begin{theorem}\label{parabolic-existence-uniqueness}
  Suppose that $\Omega = D \times (0,T)$ where $D \subset \reals^3$ is
  defined as above and $T > 0$. Also, assume that $u \in L^2(\Omega)$ and $g, a_i,
  b_{ij}$ are in $C(\Omega)$ for $i,j \in
  \{1,2,3\}$. Furthermore, assume that $b_{ij} = b_{ji}$ and there
  exists a uniform constant $K > 0$ such that 
$$\sum_{i,j = 1}^3 b_{ij}(x,t) y_i y_j > K \sum_{i=1}^3 y_i^2 \qquad \forall y_i \in \reals \setminus \{0\} 
\quad \text{and} \quad \forall (x,t) \in \Omega.$$
Then there exists a unique solution $c(x,t)$ of
\eqref{parabolic-PDE-expanded} and a positive constant $C$
independent of $u$ so that   
$$
\| c \|_{L^2(\Omega)} \le C \| u \|_{L^2(\Omega)}.
$$
\end{theorem}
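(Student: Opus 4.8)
The plan is to prove this by the Galerkin method combined with energy estimates, following the standard treatment of second-order parabolic equations in \cite[Section~7.1]{Evans}. First I would recast \eqref{parabolic-PDE-expanded} in weak form, seeking $c \in L^2(0,T; H^1_0(D))$ with $\partial_t c \in L^2(0,T; H^{-1}(D))$ such that, for every test function $v \in H^1_0(D)$ and a.e.\ $t \in (0,T)$,
$$
\langle \partial_t c, v \rangle + B[c, v; t] = (u, v),
$$
where $B[\cdot,\cdot;t]$ is the (time-dependent, non-symmetric) bilinear form associated with the elliptic part of the operator and $(\cdot,\cdot)$ is the $L^2(D)$ inner product. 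The symmetry $b_{ij} = b_{ji}$ together with the uniform ellipticity hypothesis are precisely what make the principal part of $B$ coercive on $H^1_0(D)$ up to a lower-order correction; one subtlety to address at this stage is that the principal term is written in non-divergence form with merely continuous $b_{ij}$, so the principal part must be placed into a form amenable to the variational framework before proceeding.

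The heart of the argument is the energy estimate. Using boundedness of $g, a_i, b_{ij}$ on the compact closure of $\Omega$ (a consequence of their continuity) together with the ellipticity constant $K$, I would establish a G\aa rding inequality of the form
$$
B[v, v; t] \ge \beta \| v \|_{H^1_0(D)}^2 - \lambda \| v \|_{L^2(D)}^2
\qquad \forall v \in H^1_0(D),
$$
for constants $\beta > 0$ and $\lambda \ge 0$ independent of $t$. The first-order terms $a_i \partial_i c$ and the zeroth-order term $g c$ contribute no coercivity and must be absorbed into the $\lambda \| v \|_{L^2}^2$ term via Young's inequality. This is the step where the lack of symmetry and the presence of lower-order terms must be handled carefully, and I expect it to be the main technical obstacle.

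Next I would carry out the Galerkin approximation: fix an orthonormal basis $\{ w_k \}$ of $L^2(D)$ consisting of Dirichlet eigenfunctions of the Laplacian, seek approximations $c_m(t) = \sum_{k=1}^m d_k^m(t) w_k$ solving the projected equations, which form a linear system of ODEs for the coefficients $d_k^m$ and is uniquely solvable on $[0,T]$. Testing the projected equation against $c_m$ and invoking the G\aa rding inequality yields, after an application of Gr\"onwall's inequality, bounds for $c_m$ in $L^\infty(0,T; L^2(D)) \cap L^2(0,T; H^1_0(D))$ and for $\partial_t c_m$ in $L^2(0,T; H^{-1}(D))$ that are uniform in $m$ and depend linearly on $\| u \|_{L^2(\Omega)}$. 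Weak compactness then lets me extract a subsequence converging to a weak solution $c$, and passing to the limit in the projected equations shows that $c$ solves the weak formulation and attains the initial condition $c(\cdot,0)=0$.

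Finally, uniqueness follows by applying the same energy estimate to the difference of two solutions sharing the data $u$: the difference satisfies the homogeneous equation with zero initial data, so Gr\"onwall forces it to vanish. The desired bound $\| c \|_{L^2(\Omega)} \le C \| u \|_{L^2(\Omega)}$ is then read off directly from the uniform energy estimate, since $L^2(\Omega) = L^2(0,T; L^2(D))$ and the resulting constant $C$ depends only on $T$, $K$, and the sup-norms of the coefficients, but not on $u$.
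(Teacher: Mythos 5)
Your proposal reconstructs exactly the argument the paper relies on: the paper offers no proof of its own for this theorem, only citations to \cite[Section~7.1]{Evans} and to Renardy--Rogers, and those references establish the result by precisely the weak-formulation/G\aa rding/Galerkin/Gr\"onwall scheme you describe. So in approach you are aligned with the paper's intended proof, and the chain you outline (uniform energy bounds for the Galerkin approximations, weak compactness, passage to the limit, uniqueness via Gr\"onwall on the difference, and a constant depending only on $T$, $K$ and the sup-norms of the coefficients) is the standard and correct one for \emph{divergence-form} parabolic problems.

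The genuine gap is the point you flag but defer, and it is not a minor subtlety: the principal part $\sum_{i,j} b_{ij}\partial_{ij}c$ is in non-divergence form while the hypotheses give only $b_{ij}\in C(\Omega)$. To define a bilinear form $B[c,v;t]$ on $H^1_0(D)\times H^1_0(D)$ you must integrate by parts, $\int_D b_{ij}\,\partial_{ij}c\, v\,\dd x = -\int_D \partial_i c\,\bigl(b_{ij}\,\partial_j v + v\,\partial_j b_{ij}\bigr)\dd x$, and merely continuous $b_{ij}$ do not have the derivatives $\partial_j b_{ij}$ this requires. Without that step the weak formulation you start from is not defined for $c\in H^1_0(D)$, so the G\aa rding inequality has nothing to act on and the Galerkin machine never launches; note also that the step you identify as the main obstacle (absorbing the lower-order terms) is in fact routine, whereas this one cannot be fixed by a better estimate inside your framework. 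The repair is either to strengthen the hypotheses to $b_{ij}$ Lipschitz (equivalently $C^1$), rewrite the operator in divergence form, and then run your proof verbatim, or to abandon the variational route in favor of a strong-solution theory for non-divergence parabolic equations with continuous coefficients (parabolic $L^p$ estimates or semigroup methods, which is plausibly what the paper's second citation is for). To be fair, this defect is inherited from the paper itself: Evans's Galerkin theorem also treats divergence-form operators with bounded coefficients, so the theorem as stated is not literally covered by that citation either. But since you committed to the variational route, the ``subtlety to address'' is exactly where your proof breaks, and it needs either the added regularity assumption or a change of theory to close.
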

This result along with the fact that $c$ depends linearly on $u$
allows us to define a bounded linear operator $S: L^2(\Omega) \to L^2(\Omega)$
so that $S(u) = c$ whenever the conditions of Theorem
\ref{parabolic-existence-uniqueness} are satisfied. 

Now consider positive constants $r$ and $\tau$ and a sequence of
points $x_i \in D$ and $t_i \in (0,T)$ for $i = 1,2, \cdots, m$. 
Define the sets 
$$ Q_i = B(x_i, r) \times [t_i, t_i +
\tau],$$
where $B(x_i, r) \subset D$ is the ball of radius $r$ centered
at $x_i$. Suppose that $\Omega$ is large enough so that $Q_i \subset
\Omega$ for all $i$ and also $Q_i \cap Q_j = \emptyset$ if $i \neq
j$. To this end, given a solution $u$ of
\eqref{parabolic-PDE-expanded},
 we define the bounded linear measurement operators 
$$
M_i : L^2(\Omega) \to \reals \qquad M_i(c) := \int_{Q_i} c \: dx dt.
$$
The elaborate construction of the $M_i$ corresponds to a common method
of measurement in the study of deposition of particulate matter where
a number of plastic jars (also known as dust-fall jars) are left in
the field for a given period of time \cite{hosseini-lead-inverse,lushi-inverse}. At the end of this period the
jars are taken to the lab and the concentration of deposited material
in each jar is measured.

Putting everything together we can define the forward map
\begin{equation}
  \label{parabolic-PDE-measurements}
\mcl{G}: L^2(\Omega) \to \reals^m \qquad \mcl{G}(u) := ( M_1(S(u)), \cdots, M_m(S(u)))^T.  
\end{equation}
This operator is bounded and linear since $S$ and the $M_i$ are as
well. Now suppose that $y \in \reals^m$ is the data and consider the
usual additive Gaussian noise model 
\begin{equation} \label{parabolic-PDE-data-model}
y = \mcl{G}(u) + \eta \qquad \text{where} \qquad  \eta \sim \mcl{N}(0,
\sigma^2 \mb{I})\qquad 
\text{and} \qquad \sigma > 0.
\end{equation}
 Now we turn our attention to the construction of a
prior measure for the source term.

Let $X = L^2(\Omega)$ and suppose that we have prior knowledge that
the source is constant in time and it is supported within a smooth domain 
$\tilde{D} \subset  D \subset \reals^3$. For example, the domain
$\tilde{D}$ can denote an industrial site which is known as the main
polluter in an area. We let $\{ \psi_k \}_{k=1}^\infty$ and $\{ \lambda_k\}_{k=1}^\infty$ denote the
eigenfunctions and eigenvalues of the Laplacian on $\tilde{D}$.
 The $\psi_k$ form an
orthonormal basis for $L^2(\tilde{D})$ and so they are a good
candidate for prior construction. Consider the random variable
$$
u = \sum_{k=1}^\infty \gamma_k \xi_k \psi_k
$$
where $\gamma_k = \lambda_k^{-1}$ and $\xi_k \sim \text{Exp(1)}$. Let
$\nu$ denote the probability measure that is induced by this random
variable on $L^2(\tilde{D})$.
 By
Corollary \ref{product-prior-is-convex-iid} this measure is convex.
 Now  we uniquely extend the elements of
$L^2(\tilde{D})$ 
by zero to $L^2(D)$ and then extend them as 
constant functions in
direction of $t$ to elements of $L^2(\Omega)$. Let $E:L^2(\tilde{D})
\to L^2(\Omega)$ denote this extension operator which is both bounded
and linear. We now define our prior measure $\mu_0$ to be the push-forward of the measure $\nu$ 
under 
$E$ which is a probability measure on  $L^2(\Omega)$. This measure is convex following Corollary
\ref{convexity-preservation}(i). Putting this fact together with the
forward model given by \eqref{parabolic-PDE-measurements} 
and \eqref{parabolic-PDE-data-model} as well as Theorem
\ref{forward-model-assumptions} and Corollary
\ref{convex-bayesian-inverseproblems-poly-bounded} implies that the Bayesian
inverse problem of finding $u$ from linear measurements of $c$ is well-posed.
\subsection{Example 6: Estimating the permeability in porous medium flow}
In this section we consider an example problem which involves
estimating the coefficients of an elliptic PDE.
 We present this example in the 
context of flow in a porous medium, such as 
groundwater flow.
 The inverse problem
involves estimating the permeability of the porous medium from
measurements of pressure at multiple points. This example
was considered in \cite{dashti-besov} with a Besov prior and in 
\cite{dashti-elliptic-UQ} with Gaussian priors.
Here we will present the same example using a convex prior that is
based on the Gamma distribution. This is an example of a nonlinear inverse problem with a convex prior measure where the forward map does not 
satisfy condition (i) of Theorem \ref{forward-model-assumptions}
for every $\epsilon > 0$. Therefore, we have to take extra care to make
sure that the prior results in a well-posed inverse problem.

Consider the elliptic PDE 
\begin{equation}
  \label{elliptic-PDE}
  - \nabla \cdot ( \exp( u(x) ) \nabla p(x) )  = f ,\qquad x \in
  \mbb{T}^2, \qquad f \in L^2(\mbb{T}^2),
\end{equation}
where the boundary conditions are periodic and $\mbb{T}^2 = (0,1]^2$. Here $\exp(u(x))$ is the permeability of the medium. We choose
to work with the exponential form to ensure that the permeability is positive.
Assume that the data $y \in \reals^m$ consists of noisy
pointwise measurements of the pressure $p(x)$ (\myhl{alternatively,
  one can consider local averages of the pressure field if $p$ is
  not defined pointwise}) i.e. 
$$
 y = \mcl{G}(u) + \eta \qquad \text{where} \qquad \eta \sim
 \mcl{N}(0,\sigma^2 \mb{I}) \qquad \text{and} \qquad \sigma >0
$$
and 
\begin{equation}\label{elliptic-PDE-forward-map-definition}
\mcl{G} (u) := ( p(x_1), \cdots, p(x_m) )^T
\qquad \text{where}
 \qquad x_1, x_2, \cdots, x_m \in \mbb{T}^2.
\end{equation}
The noise variance $\sigma$ and the points $\{ x_k \}_{k=1}^m$ are fixed. 

We are interested in the setting where $u \in
C^1(\mbb{T}^2)$.
Under this assumption we have the following theorem concerning
the boundedness and continuity of the forward map.
\begin{theorem}[{\cite[Corollary~3.5]{dashti-elliptic-UQ}}]\label{elliptic-PDE-forward-map-properties}
Suppose that $\mcl{G}$ is given by
\eqref{elliptic-PDE-forward-map-definition} where $p(x)$ is
given by \eqref{elliptic-PDE} and $f \in L^2(\mbb{T}^2)$. Then for any
$u \in C^1(\mbb{T}^2)$ there exists a constant $D_1 = D_1( m, \|f
\|_{L^2(\mbb{T})})$ such that 
$$
\|\mcl{G}(u) \|_2  \le D_1 \exp( \| u \|_{C^1(\mbb{T}^2)}).
$$ 
If $u_1, u_2 \in C^1(\mbb{T}^2)$, then for any
$\epsilon > 0$
$$
\| \mcl{G}(u_1) - \mcl{G}(u_2) \|_2 \le D_2 \exp( c \max \{ \| u_1
\|_{C^1(\mbb{T}^2)} ,\| u_2 
\|_{C^1(\mbb{T}^2)} ) \| u_1 - u_2 \|_{C^1(\mbb{T}^2)},
$$
where $D_2= D_2( M, \epsilon, \|f \|_{L^2(\mbb{T}^2)})$ and $c = 4 + 8 + \epsilon$.
\end{theorem}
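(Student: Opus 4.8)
The plan is to derive both bounds from the weak theory of the elliptic equation \eqref{elliptic-PDE}, carefully tracking how the ellipticity constants depend on $\|u\|_{C^1(\mbb{T}^2)}$. Writing $a(x) := \exp(u(x))$, the key observation is that for $u \in C^1(\mbb{T}^2)$ we have the two-sided bound $\exp(-\|u\|_{C^1}) \le a(x) \le \exp(\|u\|_{C^1})$, so the bilinear form $B(p,v) := \int_{\mbb{T}^2} a \nabla p \cdot \nabla v \, dx$ is coercive and bounded on the mean-zero subspace of $H^1(\mbb{T}^2)$, with constants that are exponential in $\|u\|_{C^1}$. By Lax--Milgram there is then a unique normalized weak solution $p = p(u)$, and this is the object whose point values define $\mcl{G}$.

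First I would establish the boundedness estimate. Testing the weak form against $p$ itself and applying the Poincar\'e inequality on the periodic torus gives the energy bound $\|\nabla p\|_{L^2} \lesssim \exp(\|u\|_{C^1}) \|f\|_{L^2}$. Since point evaluation is not bounded on $H^1(\mbb{T}^2)$ in dimension two, I would then invoke elliptic regularity: because $a \in C^1$ with $\|a\|_{C^1}$ controlled by $\exp(\|u\|_{C^1})$, the solution satisfies $p \in H^2(\mbb{T}^2)$ with an $H^2$-bound that again acquires exponential factors in $\|u\|_{C^1}$. The embedding $H^2(\mbb{T}^2) \hookrightarrow C^0(\mbb{T}^2)$ controls each $|p(x_i)|$, and collecting the $m$ point values yields $\|\mcl{G}(u)\|_2 \le D_1 \exp(\|u\|_{C^1})$ with $D_1$ depending only on $m$ and $\|f\|_{L^2}$.

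For the Lipschitz estimate I would subtract the two equations. With $a_j := \exp(u_j)$ and $w := p(u_1) - p(u_2)$, the difference solves $-\nabla \cdot (a_1 \nabla w) = \nabla \cdot ((a_1 - a_2)\nabla p(u_2))$ weakly. An energy estimate gives $\|\nabla w\|_{L^2} \lesssim \exp(\|u_1\|_{C^1}) \|a_1 - a_2\|_{L^\infty} \|\nabla p(u_2)\|_{L^2}$, and the mean value theorem bounds $\|a_1 - a_2\|_{L^\infty} \le \exp(\max\{\|u_1\|_{C^1}, \|u_2\|_{C^1}\}) \|u_1 - u_2\|_{C^1}$. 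Passing again to $H^2$ regularity for $w$ and the embedding into $C^0$ to recover its point values produces the stated bound; the explicit constant $c = 4 + 8 + \epsilon$ emerges from adding the exponents contributed by the coercivity constant, the $H^2$ regularity step applied to both $p(u_2)$ and $w$, and the embedding, with the $\epsilon$ absorbing the residual dependence of the regularity constant on $\|a\|_{C^1}$.

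The main obstacle, and the reason the estimates are exponential rather than polynomial in $\|u\|_{C^1}$, is the passage from the natural $H^1$ energy setting to pointwise control of $p$; this is precisely the difficulty the paper flags in its remark allowing local averages when $p$ is not defined pointwise. In two dimensions $H^1$ just fails to embed into $C^0$, forcing one up to $H^2$, and the $H^2$ regularity constant for a divergence-form operator depends on $\|a\|_{C^1}$; since $\|a\|_{C^1} \lesssim (1 + \|u\|_{C^1}) \exp(\|u\|_{C^1})$, each regularity invocation multiplies in another exponential. Keeping these exponents aligned so that the final growth collapses to the single constant $c = 4 + 8 + \epsilon$ is the delicate bookkeeping step, which I would organize by fixing $\epsilon > 0$ at the outset and carrying a running exponent through each inequality. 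Since the result is recorded as \cite[Corollary~3.5]{dashti-elliptic-UQ}, one may alternatively invoke it directly.
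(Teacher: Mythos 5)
The paper itself contains no proof of this statement: it is quoted, constants and all, from the cited reference, so your closing remark that one may ``invoke it directly'' is precisely what the paper does. Judged as a self-contained proof, however, your sketch has a genuine quantitative gap in the \emph{first} bound. You claim $\|\mcl{G}(u)\|_2 \le D_1 \exp(\|u\|_{C^1(\mbb{T}^2)})$, with exponent exactly $1$, via the chain energy estimate $\rightarrow$ $H^2$ regularity $\rightarrow$ $H^2 \hookrightarrow C^0$. But, as you yourself observe when discussing the second bound, every invocation of $H^2$ regularity costs additional exponential factors: writing the equation as $-a\Delta p = f + \nabla a \cdot \nabla p$ with $a = \exp(u)$ gives $\|\Delta p\|_{L^2} \le \exp(\|u\|_{C^1})\left(\|f\|_{L^2} + \|\nabla a\|_{L^\infty}\|\nabla p\|_{L^2}\right)$, and since $\|\nabla a\|_{L^\infty} \le \|u\|_{C^1}\exp(\|u\|_{C^1})$ and $\|\nabla p\|_{L^2} \lesssim \exp(\|u\|_{C^1})\|f\|_{L^2}$, this route yields at best $\|p\|_{H^2} \lesssim \exp((3+\epsilon)\|u\|_{C^1})\|f\|_{L^2}$; even interpolating through $H^{3/2}(\mbb{T}^2)\hookrightarrow C^0$ only improves the exponent to $2+\epsilon$. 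No bookkeeping collapses this to exponent $1$, so the first inequality as stated does not follow from your argument. This is not a cosmetic issue for the paper: in Example 6 the value of this exponent determines how much the prior must be dilated (the rescaling $u = \beta v$), so ``some exponential bound'' is strictly weaker than the quoted result.

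The missing idea is to bound the point values \emph{without} ever passing through $\|\nabla a\|_{L^\infty}$: use a stability estimate of Stampacchia/De Giorgi type (e.g.\ Gilbarg--Trudinger, Theorem 8.16), which for $-\nabla\cdot(a\nabla p) = f$ in dimension two with $f \in L^2$ gives $\|p\|_{L^\infty} \le C\,\lambda^{-1}\|f\|_{L^2}$ with $C$ independent of $a$, where $\lambda = \inf_x a(x) \ge \exp(-\|u\|_{C^1})$ is the ellipticity lower bound. Combined with continuity of $p$ (De Giorgi--Nash, or Schauder since $a$ is H\"{o}lder), which is what makes the evaluations $p(x_j)$ meaningful, this yields $\|\mcl{G}(u)\|_2 \le \sqrt{m}\,\|p\|_{L^\infty} \le D_1 \exp(\|u\|_{C^1(\mbb{T}^2)})$ with the stated linear exponent and $D_1 = D_1(m, \|f\|_{L^2})$. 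Your treatment of the Lipschitz bound, by contrast, is sound: the subtraction argument plus $H^2$/embedding accounting produces an exponent of roughly $5+\epsilon$ in $\max\{\|u_1\|_{C^1}, \|u_2\|_{C^1}\}$, which is stronger than, and hence implies, the quoted bound with $c = 4+8+\epsilon$. Two smaller points you should make explicit: on the torus the problem is solvable only under the compatibility condition $\int_{\mbb{T}^2} f\,\dd x = 0$, and both the uniqueness of the weak solution and the Poincar\'{e} inequality you invoke require the mean-zero normalization of $p$.
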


This theorem suggests that
we need to construct a prior that is supported on
$C^1(\mbb{T}^2)$. Unfortunately, since $C^1(\mbb{T}^2)$ does not have an
unconditional basis our recipe for construction of the prior measures
from Section \ref{construction-of-convex-priors} does not apply directly. Instead, we shall construct
the prior to be supported within a Sobolev space that is sufficiently
regular and we use the Sobolev embedding theorem to show
that 
the prior will be supported on $C^1(\mbb{T}^2)$ as desired. 

Let $\{ \psi_j \}_{j=1}^\infty$ be an $r$-regular wavelet basis for
$L^2(\mbb{T}^2)$ \cite[Section~2.1]{meyer} where $r > 2$. Then for $f
\in L^2(\mbb{T}^2)$ we can write 
$$
f(x) = \sum_{k=1}^\infty  \hat{f}_k \psi_k(x)
$$
where $\{ \hat{f}_k\}$ is the sequence of basis coefficients of $f$
(see \cite[Appendix~A]{lassas-invariant} for how the wavelet basis indices
are reordered to form a single sum). For $s<
r$ we can
identify the Sobolev space $H^s(\mbb{T}^2)$ as 
$$
H^s(\mbb{T}^2) := \left\{ f \in L^2(\mbb{T}^2) : \sum_{k=1}^\infty k^s
  | \hat{f}_k |^2 <\infty \right\}.
$$

As usual, we construct the prior measure by randomizing the basis
coefficients. Let $\{ \xi_k\}$ and $\{\zeta_k\}$ be two sequences of
i.i.d.\  random variables on $\reals$ that are distributed according to
$\text{Gamma}(2,1)$ and define $\theta_k = \xi_k - \zeta_k$. This
construction ensures that $\theta_k$ has a symmetric distribution. By
Corollary \ref{convexity-preservation}(i) and (ii) the $\theta_k$ are
convex. Now 
 consider the random variable 
 \begin{equation}
   \label{wavelet-prior-sample}
v = \sum_{k=1}^\infty \gamma_k \theta_k \psi_k   
 \end{equation}
where $\gamma_k = k^{-2}$. By Corollary \ref{product-prior-is-convex-iid}
we know that $\| v\|_{L^2(\mbb{T}^2)} < \infty$ a.s. Furthermore, 
$$
\sum_{k=1}^\infty k^3 |\gamma_k|^2 |\eta_k|^2 = \sum_{k=1}^\infty k^{-1}
|\eta_k|^2.
$$
But this sum converges almost surely by Kolmogorov's two series
theorem and dominated convergence and so $\| v \|_{H^3(\mbb{T}^2)} < \infty$ a.s. By the Sobolev embedding theorem \cite[Proposition~3.3]{taylor-PDE} $H^3(\mbb{T}^2) \subset
C^1(\mbb{T}^2)$ and so the prior measure induced by the random variable
$v$ in \eqref{wavelet-prior-sample} is supported in $C^1(\mbb{T}^2)$ as
desired and it is a convex measure.

Before we proceed to proving the well-posedness of this problem 
 we will need to reweight the 
samples $v$. The reason for this issue is that 
$ \EE \exp( \| v \|_X) $ may not be \myhl{finite}. However, by Theorem 
\ref{convex-measurable-exp} we know that there exists a constant 
$\kappa > 0$ so that $\EE \exp ( \kappa \| v \|_X ) < \infty$. 
Thus, we take the prior samples $u = \beta v$ for some $\beta \in
(0, \kappa]$ which ensures that $\EE \exp( \| u\|_X) < \infty$.
 We
are now able to apply Theorems \ref{elliptic-PDE-forward-map-properties},
\ref{forward-model-assumptions} and
\ref{convex-bayesian-inverseproblems} to the prior measure that is
induced by $u$
 in order to show that 
this inverse problem is well-posed.

Note that estimating the constant $\kappa$ for a given prior is 
in general a difficult task and to the best of our knowledge a general
recipe for estimating this constant does not exist in the
literature. Of course the actual value of this constant is only
important when the forward map is exponentially bounded by the
parameter 
norm. For example, Theorem~\ref{elliptic-PDE-forward-map-properties}
above  dictates that $\exp( \kappa \| u\|_{C^1(\mbb{T}^2)} )$ must be
integrable under the prior for $\kappa >1$ in order for us to achieve well-posedness. Thus, the properties of the forward map 
can be used to identify the minimum ``allowed'' value of the constant
$\kappa$.
On the other hand,
if the forward map is polynomially bounded then well-posedness
can be \myhl{achieved} regardless of the actual value of $\kappa$ according to
Corollary \ref{convex-bayesian-inverseproblems-poly-bounded}

\section{Closing remarks}
We started this article by defining the notions of well-posedness
and consistency of Bayesian inverse problems in the general setting
where the parameter of interest belongs to an infinite-dimensional
Banach space. We presented a set of assumptions on the prior measure $\mu_0$
and the likelihood potential $\Phi$ under which the resulting
inverse problem would be well-posed. \myhl{Furthermore, we discussed consistent approximation of the 
posterior measure via an approximation $\Phi_N$ of the likelihood potential $\Phi$. 
We discussed mild conditions
on the forward map and the likelihood potential that allowed us to
obtain bounds on the rate of convergence of the
approximate posterior in the Hellinger metric. We particularly focused on the setting where 
$\Phi_N$ is obtained by discretizing the forward problem using finite dimensional projections. 
}

\myhl{
We mainly
focused on our assumptions concerning the prior measure $\mu_0$} and
showed that the class of convex measures is a
good choice for modelling of prior knowledge as its elements
result in well-posed inverse problems. This class already
includes well known measures such as Gaussian and Besov measures and so 
our results can be viewed as a generalization of existing results
regarding well-posedness of Bayesian inverse problems. 

Afterwards, we presented a general framework for the construction of priors
 on separable Banach spaces that have an unconditional
basis. Inspired
by the Karhunen--Lo{\'e}ve expansion of Gaussian random variables, our
framework uses a countable product of one
dimensional convex measures on $\reals$. Next, we considered some of
the practical aspects of solving the Bayesian inverse problems that
arise from choosing convex priors. Finally, we presented
four concrete examples of well-posed Bayesian inverse problems that 
used convex prior measures.

An important consequence of this work is that
now we have access to a much larger class of measures for modelling of
prior knowledge in Bayesian inverse problems. For example, if one is
interested in imposing a constraint such as positivity then one can
use the Gamma distribution or the uniform distribution to do so. The
resulting measure will still be convex. More interestingly, recent
results in \cite{scott-shrink} and \cite{calvetti-hierarchical} hint
at the use of specific convex measures in order to promote sparsity of
the parameters. Even in the case of the Besov priors of \cite{lassas-invariant}, the resulting
maximum a posteriory estimator is equivalent to solving a least
squares problem with an $\ell^1$ regularization term which is a common
technique for promoting sparsity of the solution. Then the
well-posedness result for the class of convex measures is a first step
towards the study of sparse solutions within the Bayesian approach
to inverse problems.
\section*{Acknowledgements}
We are thankful to Dr. Sergios Agapiou for pointing us towards the
works of Christer Borell on
convex measures. 
We are also indepted to Profs. Paul Tupper and Tim Sullivan for
numerous comments and insightful discussions.

\bibliographystyle{abbrv}
\bibliography{ref}


\end{document}